\documentclass[leqno, 12pt]{amsart}

\usepackage{seqsplit}
\usepackage[english]{babel}

\usepackage[utf8]{inputenc}
\usepackage{geometry}

\usepackage{lineno}

\usepackage{amsmath}
\usepackage{amssymb}
\usepackage{amsthm}

\usepackage{graphicx}
\usepackage{subfig}
\usepackage{caption}
\usepackage{hyperref}
\usepackage{float}
\usepackage[table]{xcolor}
\usepackage{booktabs}
\usepackage[foot]{amsaddr}

\usepackage[capitalise, noabbrev]{cleveref}

\DeclareFixedFont{\ttb}{T1}{txtt}{bx}{n}{10} % for bold
\DeclareFixedFont{\ttm}{T1}{txtt}{m}{n}{10}  % for normal

\usepackage{color}
\definecolor{deepblue}{rgb}{0,0,0.5}
\definecolor{deepred}{rgb}{0.6,0,0}
\definecolor{deepgreen}{rgb}{0,0.5,0}

\usepackage{listings}

\newcommand\pythonstyle{\lstset{
		language=Python,
		basicstyle=\ttm,
		morekeywords={self},              % Add keywords here
		keywordstyle=\ttb\color{deepblue},
		commentstyle =\ttm,
		emph={MyClass,__init__},          % Custom highlighting
		emphstyle=\ttb\color{deepred},    % Custom highlighting style
		stringstyle=\color{deepgreen},
		frame=tb,                         % Any extra options here
		showstringspaces=false
}}

\lstnewenvironment{python}[1][]
{
	\pythonstyle
	\lstset{#1}
}
{}

\newcommand{\Z}{\mathbb Z}

\newcommand{\Ss}{\mathfrak S}

\newcommand{\Pos}{\mathcal P}
\newcommand{\RPos}{\mathcal R}
\renewcommand{\L}{\mathbf L}

\newcommand{\Sort}{\mathbf{Sort}}
\newcommand{\sort}{\operatorname{sort}}

\newcommand{\Tree}{T}
\newcommand{\PTree}{U}
\newcommand{\Trees}{\operatorname{T}}
\newcommand{\IdL}{\mathbf{Low}}
\newcommand{\JIrr}{\mathcal J}
\newcommand{\word}{\mathrm{w}}

\newcommand{\Cat}{\operatorname{Cat}}

\newcommand{\pack}{\operatorname{pack}}

\newcommand{\plusone}{\! + \! 1}

\newcommand{\minusone}{\!-\! 1}

\definecolor{darkblue}{rgb}{0,0,0.7} % darkblue color
\newcommand{\darkblue}{\color{darkblue}} % darkblue command
\newcommand{\defn}[1]{\textsl{\darkblue #1}} % emphasis of a definition

\newcommand{\GT}{\mathbf{GT}}

\numberwithin{equation}{section}
\newtheorem{thm}{Theorem}[section]
\newtheorem{lem}[thm]{Lemma}
\newtheorem{prop}[thm]{Proposition}
\newtheorem{nota}[thm]{Notation}
\newtheorem{cor}[thm]{Corollary}
\newtheorem{defi}[thm]{Definition}
\newtheorem{cnj}[thm]{Conjecture}

\theoremstyle{remark}
\newtheorem{rmk}[thm]{Remark}

\title{Middle orders: all distributive lattices \linebreak between weak and Bruhat}

\author{Ludovic Schwob$^1$}
\address{$^1$Univ Gustave Eiffel, CNRS, LIGM, F-77454 Marne-la-Vallée, France}

\begin{document}
	\begin{abstract}
		For a given Coxeter group, we study distributive lattices called middle orders refining the weak order and refined by the Bruhat order. In type $A$, we construct such lattices indexed by binary trees using a direct bijection between permutations and lower sets of a certain partition of the root poset into rectangles.  When the binary tree is a left-comb tree, we recover the middle order defined by Bouvel, Ferrari, and Tenner (2025). We study combinatorial properties of these lattices, and show they are the only distributive lattices between the weak and Bruhat orders in type $A$. For general Coxeter groups, we study middle orders on parabolic quotients and use these to generalize our construction in type $A$ to other Weyl groups, obtaining so-called ``minuscule middle orders''. We show that they are a subset of sorting orders defined by Armstrong (2009), and we give conjectural descriptions of all middle orders that are not minuscule.
		
			%	We construct distributive lattices on $\Ss_n$ refining the weak order on permutations and refined by the Bruhat order, generalizing the middle order defined by Bouvel, Ferrari and Tenner. These lattices, which we also call middle orders, are defined using a direct bijection between permutations and lower sets of a certain partition of the root poset into rectangles. We study combinatorial properties of these lattices, and show they are the only distributive lattices between the weak and Bruhat orders. We also consider generalizations of middle orders to other finite Coxeter groups and to parabolic quotients of finite Coxeter groups. We finally define minuscule middle orders on Weyl groups as a generalization of our construction on $\Ss_n$ and show that they are sorting orders.
		%and we introduce Jurassian lattices, which are subposets of minuscule middle orders sharing many properties with Cambrian lattices.
	\end{abstract}

	\maketitle
	
	\tableofcontents
	\section{Introduction}
%	Many lattices on permutations are known to be refinements of the weak order coarsenings of the Bruhat order.
	The weak order and the Bruhat order are two well-studied posets on permutations, which are also defined on Coxeter groups in general~\cite{BjBr05}. The weak order is a semidistributive lattice, whose quotients include many interesting lattices such as the Tamari lattice, Cambrian, and biCambrian lattices~\cite{BaRe17,Read06}, the lattice of permutrees~\cite{PiPo17}, and the lattice of Baxter permutations~\cite{Read05bis}. These quotients can all be realized as polytopes~\cite{PiSa19}. The Bruhat order is a refinement of the weak order with the same ranking. This poset appeared first in the study of Schubert varieties~\cite{Ehr34}. Lascoux and Schützenberger showed that the completion of the Bruhat order on permutations is the lattice of alternating sign matrices~\cite{LaSc96}, which is distributive.
	Distributive lattices are essential objects in lattice theory and in combinatorics. One fundamental result on distributive lattices is the Birkhoff's representation theorem~\cite{Birk37}, which states that every finite distributive lattice can be represented as the lattice of lower sets of a poset.
	Notable examples include the lattice of congruences of any lattice, boolean algebras, the Young lattice, the lattice of alternating sign matrices, and lattices of $d$-factors~\cite{Prop25}, including many lattices on tilings and matchings.
	
	A lattice on permutations called ``middle order'' has been recently described by Bouvel, Ferrari, and Tenner \cite{BFT24}. This lattice is defined by the coordinatewise comparison of Lehmer codes, and is distributive since the join and meet operations are given by coordinatewise maximum and minimum. The middle order is a refinement of the weak order and is refined by the Bruhat order. In~\cite{ClOv25}, Claussen and Ovenhouse noticed that a distributive lattice on perfect matchings of certain graphs could be obtained by restricting the middle order to alternating permutations. Other lattices with this property have been described, such as the sorting orders described by Armstrong \cite{Arm09}, which are join-distributive lattices on finite Coxeter groups. In~\cite{CFY25}, Campanini, Fedele, and Yıldırım constructed lattices of pretorsion classes that are sometimes distributive and are conjecturally between the weak and Bruhat orders.
	
	In this paper we focus on distributive lattices that are refinements of the weak order and coarsenings of the Bruhat order, which we will call middle orders in general. We give a combinatorial construction of $C_{n-1}$ (\emph{i.e.}, the $n \! - \! 1$-st Catalan number) middle orders on $\Ss_n$, each corresponding to a binary tree with $n$ leaves. For this we construct a direct bijection between permutations and lower sets of a disjoint union of rectangular posets. This bijection can be seen as packing of inversion sets. We show that these lattices are the only middle orders on $\Ss_n$ by describing compatibility relations that covering relations of the Bruhat order must respect when they appear in a middle order. The proof involves a bijection between middle orders and Gelfand-Tsetlin patterns.
	We also investigate middle orders in other Coxeter groups, in particular in Weyl groups and parabolic quotients of finite Coxeter groups. We show that middle orders on multipermutations can be constructed in a similar way as those on $\Ss_n$. We describe particular middle orders on Weyl groups that we call minuscule middle orders, generalizing our construction on $\Ss_n$. 
	 We show that minuscule middle orders can be obtained as distributive sorting orders. This construction of minuscule middle orders involves fully commutative elements and minuscule heaps as described by Stembridge~\cite{Stem96}. We give a list of other distributive sorting orders, which we conjecture to be complete in type $B$, $C$, and $D$, and also give an exhaustive list for $F_4$ and $H_3$.
 %   We eventually study middle orders on affine Coxeter groups, such as the Young lattice described by Björner and Brenti~\cite{BjBr95}.
	
	The rest of this article is organized as follows. In Section~\ref{sec:notations} we explain notations we use later for distributive lattices, Coxeter groups and Weyl groups. In Section~\ref{sec:construction}, we explain the construction of middle orders on $\Ss_n$.
	In Section~\ref{sec:exhaustivity}, we present our main result, by proving that we built all middle orders on $\Ss_n$.
	In Section~\ref{sec:properties}, we study middle orders on $\Ss_n$ up to isomorphism, and give formulas for the numbers of intervals, chains, and boolean chains in middle orders. 
	We consider middle orders on multipermutations in Section~\ref{sec:parabolic}. 
	In Section~\ref{sec:minuscule}, we describe minuscule middle orders on Weyl groups, and in Section~\ref{sec:sorting}, we study distributive sorting orders on finite Coxeter groups.
    %, and in Section~\ref{sec:affine}, we study middle orders on affine Coxeter groups.
	
	\section{Notations and preliminaries}	
	\label{sec:notations}
	\subsection{Distributive lattices}
		\label{sec:distri}
	A lattice $\L$ is \emph{distributive} if and only if its join and meet distribute over each other:
	$$\forall x,y,z\in \L, \ x\wedge (y\vee z)=(x\wedge y)\vee (x\wedge z)\text{ and } x\vee (y\wedge z)=(x\vee y) \wedge (x\vee z).$$
	Let $\Pos:=\JIrr(\L)$ the poset of its \defn{join-irreducibles}, \emph{i.e.} elements of $\L$ covering exactly one element (see~\cite[Chapter IX]{Birk48} or \cite[Chapter II]{Grae11} for more details). $\L$ is isomorphic to the lattice $\IdL(\Pos)$ of lower sets of $\Pos$ ordered by inclusion. Dually, it is also isomorphic to the lattice $\mathbf{Upp}(\Pos)$ of upper sets of $\Pos$ ordered by reverse inclusion.
	This result, known as Birkhoff's representation theorem (or the fundamental theorem of distributive lattices), is a powerful tool for studying distributive lattices, as many of their properties can be read on their posets of irreducibles.

	\begin{itemize}
	\item The product of finite distributive lattices correspond to the disjoint union of their posets of join-irreducible elements: $\IdL(\Pos)\times \IdL(\mathcal Q)\cong \IdL(\Pos\sqcup \mathcal Q)$;
	\item Quotienting $\L$ by a lattice congruence yields a quotient isomorphic to $\IdL(\mathcal Q)$ where $\mathcal Q$ is a subposet of $\Pos$;
	\item A sublattice of $\L$ sharing the same minimal and maximal elements as $\L$ is isomorphic to $\IdL(\RPos)$ where $\RPos$ is a refinement of $\Pos$;
	\item An interval of $\L$ is isomorphic to $\IdL(\Pos\setminus (L\cup U))$ where $L\in \IdL(\Pos)$ and $U\in \mathbf{Upp}(\Pos)$ are disjoint.
	\end{itemize}
	We will see in Section~\ref{sec:chains} that complete chains (resp. complete boolean chains) of $\L$ of length $\ell$ are in bijection with surjective increasing (resp. strictly increasing) functions from $\IdL(\L)$ to $\{1,\dots, \ell\}$.
	%The order dimension of a distributive lattice $\L$ can also be easily computed, as it is equal to the length of the longest antichain in $\JIrr(\L)$.

	\subsection{Coxeter groups}
	
		Let $(W,S)$ be a Coxeter system (see \cite{BjBr05} for basic notions). The \defn{length} $\ell(w)$ of $w\in W$ is the number of generators in a reduced expression of $w$. The \defn{Bruhat order} on $W$ is defined by the inclusion of reduced words, \emph{i.e.}, $v\le w$ if some reduced expression of $v$ is a subword of some reduced expression of $w$. The right (resp. left) \defn{weak order }on $W$ is defined by $v\le_R w$ if $v$ is a prefix of some reduced expression of $w$ (resp. $v\le_L w$ if $v$ is a suffix of $w$). If $W$ is finite (see Figure \ref{fig:coxeter} for the classification of finite Coxeter groups), the weak order on $W$ is a lattice and it is distributive only if $W$ is commutative. The Bruhat order on $W$ is also a lattice only if $W$ is commutative, which is not the case for most Coxeter groups.
		
		Let $J$ be a subset of $S$. The \defn{parabolic subgroup} $W_J$ of $W$ is the subgroup generated by the elements of $J$, and its \defn{parabolic quotient} $^JW$ is the set of minimal elements of cosets in $W_J\backslash W$.
		Cosets in $W_J\backslash W$ are intervals of the left weak order (though we will consider later the right weak order on $W_J$), and are also intervals of the Bruhat order. The quotient $^JW$ is an interval of the right weak order, but is not an interval of the Bruhat order in general.
		
		\subsection{Weyl groups} A \defn{Weyl group} is a finite Coxeter group $(W,S)$ acting on a root system $\Phi$, which can be partitioned into \defn{positive and negative roots} $\Phi^+\sqcup \Phi^-$, with $\Phi^-=- \Phi^+$. The set of positive roots $\Phi^+$ is ordered by $\alpha \le \beta$ if and only if $\beta-\alpha$ is a nonnegative linear combination of positive roots. This order is called the \defn{root poset} of $W$. Its minimal elements are called \defn{simple roots} and correspond to generators of $W$. More generally, each positive root $\alpha$ corresponds to a \defn{reflection }$r\in W$ (\emph{i.e.}, any conjugate of a generator), $r$ acting on $\Phi$ as the orthogonal reflection mapping $\alpha$ onto $r(\alpha)=-\alpha$.
		Some roots of particular interest are the \defn{minuscule roots}, as they intervene in the study of minuscule representations of semisimple Lie algebras. They can be defined as the simple roots appearing with multiplicity $1$ when writing the maximal element of $\Phi^+$ as a nonnegative linear combination of simple roots.
		
		\begin{figure}[H]
		\centering
		\includegraphics[width=0.45\linewidth]{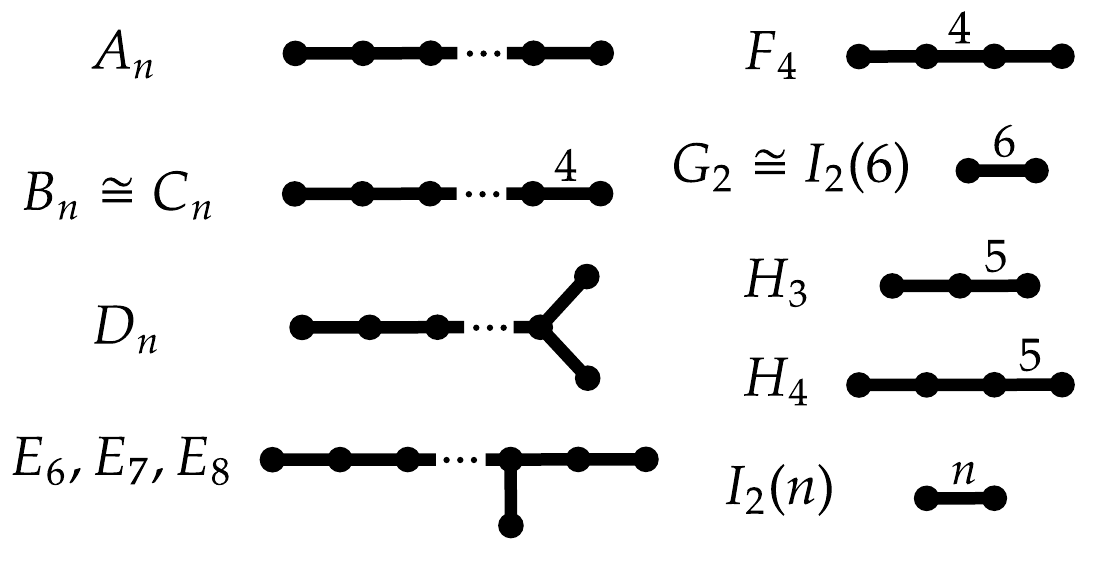}\qquad
		\includegraphics[width=0.45\linewidth]{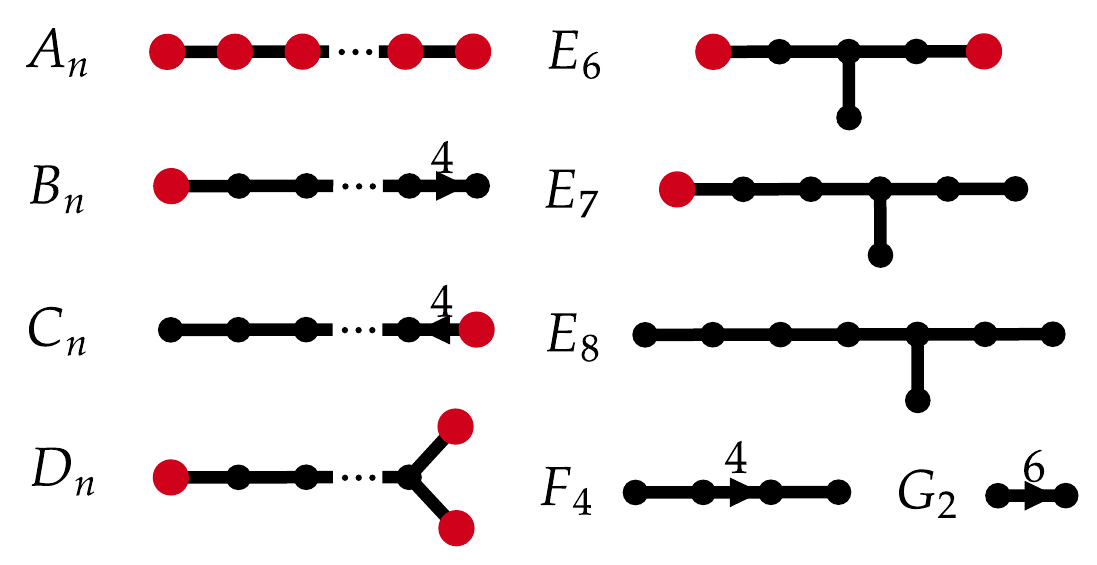}
		\caption{On the left: Coxeter diagrams of finite Coxeter groups. On the right: Dynkin diagrams of Weyl groups, with highlighted nodes corresponding to minuscule roots.}
		\label{fig:coxeter}
		\end{figure}
		
		The \defn{inversion set} of $w\in W$ is $N(w):=\Phi^+\cap w(\Phi^-)$. $N(w)$ characterizes $w$ and $|N(w)|=\ell(w)$. Inversion sets are the subsets $A\subset \Phi^+$ that are both closed and coclosed, where $A$ is \defn{closed} if for all $\alpha,\beta \in A$, $\alpha+\beta\in \Phi^+$ implies $\alpha+\beta\in A$, and $A$ is \defn{coclosed} if $\Phi^+\setminus A$ is closed. We have $v\le_Rw$ if and only if $N(v)\subset N(w)$.
	
	\section{Construction of the middle orders on $\Ss_n$}
	\label{sec:construction}
	
	We construct the posets of irreducible elements of middle orders by partitioning a triangular poset into rectangular posets each containing exactly one minimal element. For this we choose minimal elements in a given order, removing at each step the elements remaining above.
	\begin{defi}
		The poset $\Pos_n$ is the set of pairs of integers $(i,j)$ such that $1\le i<j\le n$, with order defined by $(i_1,j_1)< (i_2,j_2)\iff i_2\le i_1$ and $j_1\le j_2$.
		Let $\sigma\in \Ss_{n-1}$. For all $1\le k\le n-1$, we define
		$$\RPos_{\sigma,k}:=\{x\in \Pos_n:(k,k+1)<x\text{ and }\forall \ell<\sigma^{-1}(k), (\ell,\ell+1)\nless x\}.$$
		The $(\RPos_{\sigma,k})_{1\le k\le n-1}$ form a partition of $\Pos_n$. Each $\RPos_{\sigma,k}$ inherits a poset structure from $\Pos_n$. We define the poset $\RPos_\sigma$ as the disjoint union of the posets $\RPos_{\sigma,k}$.
	\end{defi}

We now characterize permutations which give the same partition of $\Pos_n$.
	\begin{prop}
		Let $\sigma,\tau\in \Ss_n$. We have $\RPos_\sigma=\RPos_\tau$ if and only if $\sigma\equiv \tau$, \emph{i.e.}, they can be obtained from each other by rewritings of the form $b\cdots ac\equiv b\cdots ca$ with $a<b<c$.
		%, or equivalently if the binary search trees of $\sigma$ and $\tau$ are the same.
	\end{prop}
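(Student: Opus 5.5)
The plan is to reformulate $\RPos_\sigma$ as a function on intervals, and then reduce the statement to the classical description of sylvester classes. Here $\sigma,\tau$ should be taken in $\Ss_{n-1}$, as in the definition. I read the condition in the definition of $\RPos_{\sigma,k}$ as quantifying over the letters $\sigma(\ell)$ with $\ell<\sigma^{-1}(k)$.

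\emph{Step 1 (reformulation).} Observe that $(k,k+1)\le(i,j)$ in $\Pos_n$ exactly when $i\le k<j$, that is, when $k\in[i,j-1]$. Hence $(i,j)\in\RPos_{\sigma,k}$ if and only if $k$ is the leftmost letter of $\sigma$, in positional order, that lies in the interval $[i,j-1]$. Since every interval $I\subseteq[1,n-1]$ is of the form $[i,j-1]$, the partition $\RPos_\sigma$ is the same data as the map
$$f_\sigma:\{\text{intervals } I\subseteq[1,n-1]\}\to[1,n-1],\qquad f_\sigma(I)=\text{first letter of }\sigma\text{ lying in }I.$$
Thus $\RPos_\sigma=\RPos_\tau$ if and only if $f_\sigma=f_\tau$.

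\emph{Step 2 (rewriting preserves $f$).} Suppose $\tau$ is obtained from $\sigma$ by a move $b\cdots ac\to b\cdots ca$ with $a<b<c$. For an interval $I$, there are two cases.
\begin{itemize}
\item If $I$ does not contain both $a$ and $c$, the relative order of the letters of $I$ is unchanged, so $f_\sigma(I)=f_\tau(I)$.
\item If $I$ contains both $a$ and $c$, then it contains $b$, because $I$ is an interval. The letter $b$ appears before both $a$ and $c$, so neither $a$ nor $c$ is the first letter of $I$, and again $f_\sigma(I)=f_\tau(I)$.
\end{itemize}
Therefore $\sigma\equiv\tau$ implies $\RPos_\sigma=\RPos_\tau$.

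\emph{Step 3 (converse, by induction on $n$).} Let $r=f_\sigma([1,n-1])=\sigma(1)$. This is also $\tau(1)$ when $f_\sigma=f_\tau$.
\begin{itemize}
\item First show $\sigma\equiv r\,\sigma_{<}\,\sigma_{>}$, where $\sigma_<$ and $\sigma_>$ are the subwords of letters smaller and larger than $r$. Any adjacent factor $ca$ with $a<r<c$ occurring after $r$ can be rewritten to $ac$ using $b=r$. Bubble-sorting in this way moves all small letters before all large ones.
\item Next, $\sigma_<$ and $\tau_<$ are permutations of $[1,r-1]$. Their first-letter functions agree, because $f_\sigma(I)=f_{\sigma_<}(I)$ for $I\subseteq[1,r-1]$. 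Similarly, $\sigma_>$ and $\tau_>$ have agreeing first-letter functions after shifting.
\item By induction, $\sigma_<\equiv\tau_<$ and $\sigma_>\equiv\tau_>$. The rewritings inside the factor $\sigma_<$ or $\sigma_>$ remain valid rewritings inside $r\,\sigma_<\,\sigma_>$, since they only involve letters and a witness $b$ within the same factor. Hence $\sigma\equiv r\,\sigma_<\,\sigma_>\equiv r\,\tau_<\,\tau_>\equiv\tau$.
\end{itemize}
Equivalently, $f_\sigma$ encodes the binary search tree of $\sigma$ (root $f([1,n-1])$, then recurse on the two sides), and the statement is the classical fact that sylvester classes are fibres of the binary search tree map.

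The main obstacle is the bookkeeping in Step 3. One must check that the rewriting moves used to separate small and large letters, and those lifted from the induction, are genuinely instances of $b\cdots ac\equiv b\cdots ca$. In particular, the witness $b$ must appear somewhere to the left of the swapped pair, not necessarily adjacent to it. This is routine once the witness is taken to be $r$ in the first case and the inductive witness in the second.
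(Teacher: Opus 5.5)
Your proof is correct. The direction $\sigma\equiv\tau\Rightarrow\RPos_\sigma=\RPos_\tau$ is the paper's argument in different language. The paper notes that deleting the rectangle above $(b,b+1)$ disconnects what remains into a part containing $(a,a+1)$ and a part containing $(c,c+1)$, so the two removals commute. Your remark that any interval containing $a$ and $c$ must contain $b$ is exactly this disconnection.

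The converse is where you genuinely differ. The paper argues by cardinality. Since $\equiv$ is the mirror of the sylvester congruence, there are $C_{n-1}$ classes, and an ``immediate recursion'' is said to give $C_{n-1}$ distinct rectangulations. The forward direction makes classes surject onto rectangulations, so equal counts force injectivity. You instead read off $r=\sigma(1)=f_\sigma([1,n-1])$, bubble-sort $\sigma$ into $r\,\sigma_<\,\sigma_>$ with $r$ as witness, and recurse. This produces an explicit chain of rewritings from $\sigma$ to $\tau$.

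What your route buys:
\begin{itemize}
\item It is self-contained, with no appeal to the enumeration of sylvester classes.
\item It makes explicit what the paper's recursion leaves implicit: a rectangulation determines its root block (the one containing $(1,n)$) and the rectangulations on either side. This fact is needed even to count rectangulations by $C_{n-1}$.
\item In effect you reprove that these classes are the fibres of the binary search tree map.
\end{itemize}
The paper's route is shorter given the known sylvester results; yours is constructive and independent of them.

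Your corrections of the statement are also the right reading: $\sigma,\tau\in\Ss_{n-1}$, and the condition $\ell<\sigma^{-1}(k)$ ranges over the letters $\sigma(\ell)$.
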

\begin{proof}
	Removing the rectangular portion above $(b,b+1)$ splits $\RPos_n$ into two disconnected left and right parts containing respectively $(a,a+1)$ and $(c,c+1)$, so removing the elements above $(a,a+1)$ and then those above $(c,c+1)$ gives the same result as removing elements above $(c,c+1)$ and then those above $(a,a+1)$. Hence, $\RPos_\sigma=\RPos_\tau$ if $\sigma$ and $\tau$ can be obtained from each other by rewritings of the form $b\cdots ac\equiv b\cdots ca$ with $a<b<c$.
	As $\equiv$ is the mirror of the sylvester congruence~\cite{HNT05}, permutations in $\Ss_{n-1}$ up to $\equiv$ are counted by $C_{n-1}$, and an immediate recursion shows that there is the same number of distinct rectangulations.
	%Conversely, there is a bijection between the posets $\RPos_\sigma$ with $\sigma\in \Ss_{n-1}$ and binary trees with $n$ leaves, so it is clear that we have $C_{n-1}$ equivalence classes of permutations.
\end{proof}

 The \defn{binary search tree} $\Tree$ of a permutation $\sigma=\sigma_1\dots \sigma_n$ is the binary tree whose root is $\sigma_1$, whose left (resp. right) subtree is the binary subtree of the subword of $\sigma$ consisting in values strictly smaller (resp. greater) than $\sigma_1$.
  We have $\sigma\equiv \tau$ if and only if their binary search trees are the same~\cite{HNT05}, hence $\RPos_\sigma$ is entirely determined by $\Tree$ (see Figure~\ref{fig:moposet5}). We will write $\RPos_\Tree:=\RPos_\sigma$, and we denote by $\Trees_n$ the set of binary trees with internal nodes $1,2,...,n$.
 
 \begin{figure}[H]
 	\centering
 	\includegraphics[width=0.6\linewidth]{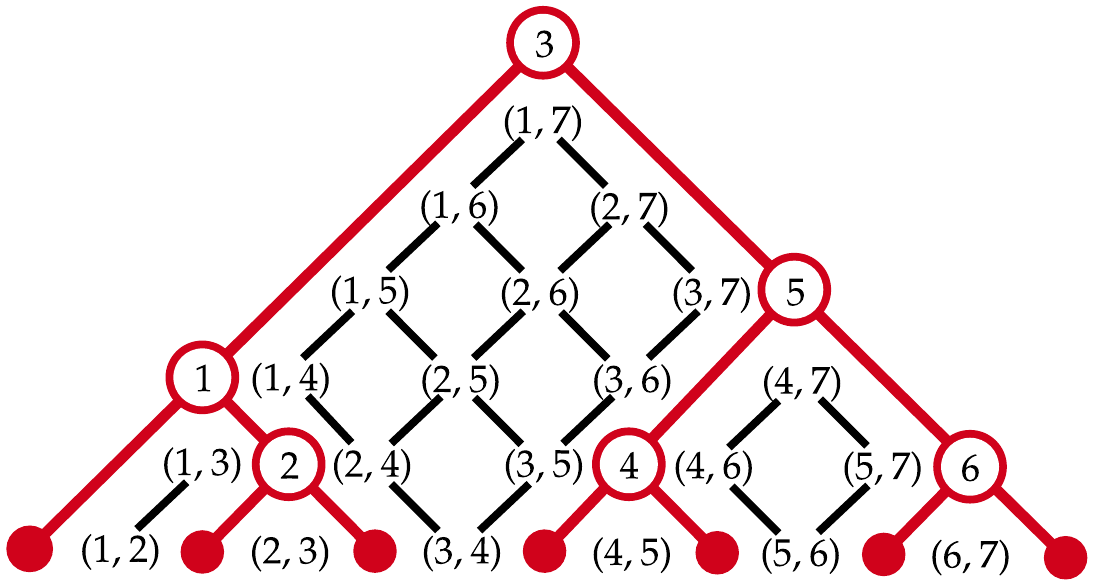}
 	\caption{The poset $\RPos_{312546}$ with its corresponding binary search tree.}
 	\label{fig:moposet5}
 \end{figure}

We will now show that for all $T\in \Trees_{n-1}$, we have a bijection between $\Ss_n$ and lower sets of $\RPos_\Tree$.

\begin{defi}
   \label{def:ideal}
	Let $\Tree\in \Trees_{n-1}$. For all $1\le k\le n-1$, let $(i_{\Tree,k},j_{\Tree,k}):=\max \RPos_{\Tree,k}$. For all $w\in \Ss_n$, let $w^{L,k}$ and $w^{R,k}$ be the subwords of $w$ consisting respectively of values between $i_{\Tree,k}$ and $k$, and between $k+1$ and $j_{\Tree,k}$.
	
		We define $I_\Tree(w)$ as the subset of $\RPos_\Tree$ such that for all $1\le k\le n-1$, $i_{\Tree,k}\le i\le k$ and $k+1\le j\le j_{\Tree,k}$, $I_\Tree(w)$ contains $(i,j)$ whenever $w^{L,k}_{i-i_{\Tree,k}+1}$ appears after $w^{R,k}_{j-k}$ in $w$.
\end{defi}

It is straightforward to check that $I_\Tree(w)$ is a lower set of $\RPos$. In fact it is possible to obtain $I_\Tree(w)$ by ``packing'' the inversion set of $w$ to the left and then to the right, or \emph{vice versa}.

\begin{defi}
\label{def:pack}
   Let $\Tree\in \Trees_{n-1}$ and $X\subset \RPos_{\Tree}$. We say that $X$ is $\Tree$-\defn{left-packed} if for all $1\le i<j<n$, $(i,j)< (i,j+1)$ in $\RPos$ and $(i,j+1)\in X$ imply $(i,j)\in X$. We say that $X$ is $\Tree$-\defn{right-packed} if for all $1< i<j\le n$, $(i,j)< (i-1,j)$ in $\RPos$ and $(i-1,j)\in X$ imply $(i,j)\in X$. 
   
   We define $L_\Tree(X)$ as the only subset of $\RPos$ which is left-packed and has the same number of elements as $X$ of the form $(i,j)$ with $k+1\le j\le j_{\Tree,k}$ for all $1\le k\le n-1$ and $i_{\Tree,k}\le i\le k$.
   
   We define $R_\Tree(X)$ as the only subset of $\RPos$ which is right-packed and has the same number of elements as $X$ of the form $(i,j)$ with $i_{\Tree,k}\le i\le k$ for all $1\le k\le n-1$ and $k+1\le j\le j_{\Tree,k}$.
\end{defi}
As we can see in~\cref{fig:bij1}, $L_\Tree(X)$ and $R_\Tree(X)$ correspond respectively to sliding elements of $X$ to the lower left or lower right.
\begin{prop}
   \label{prop:packing}
   Let $\Tree\in \Trees_{n-1}$ and $w\in \Ss_n$. Let $X$ be the inversion set of $w$, \emph{i.e.}, the set of pairs $(i,j)$ with $1\le i<j\le n$ such that $j$ appears before $i$ in $w$. We have the equalities $$I_\Tree(w)=L_\Tree(R_\Tree(X))=R_\Tree(L_\Tree(X)).$$
\end{prop}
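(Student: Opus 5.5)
The plan is to reduce everything to a single rectangle and then compare three descriptions of one Young-diagram-like shape. First I make the packing operators explicit. Since the $\RPos_{\Tree,k}$ partition $\Pos_n$, and each is the rectangle
$$\RPos_{\Tree,k}=\{(i,j): i_{\Tree,k}\le i\le k,\ k+1\le j\le j_{\Tree,k}\},$$
the operators of Definition~\ref{def:pack} act independently on each rectangle. Within rectangle $k$:
\begin{itemize}
\item $R_\Tree$ keeps, for each column $j$, the number $c_j$ of elements of $X$ in that column, and replaces them by the bottom $c_j$ cells $(k,j),(k-1,j),\dots,(k-c_j+1,j)$;
\item $L_\Tree$ keeps, for each row $i$, the number of elements of $X$ in that row, and replaces them by the leftmost cells $(i,k+1),(i,k+2),\dots$.
\end{itemize}
The same holds for $I_\Tree(w)$, which is also defined rectangle by rectangle. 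So it suffices to fix $k$, write $a=i_{\Tree,k}$ and $b=j_{\Tree,k}$, and prove the equality on $\RPos_{\Tree,k}$. Only the relative order in $w$ of the values $a,\dots,b$ matters there, so I may replace $w$ by the subword $w^{L,k}\cup w^{R,k}$.

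Next I compute each side in terms of one statistic. Let $m=k-a+1=|w^{L,k}|$. For the $p$-th letter (in order of appearance) of $w^{R,k}$, let $c(p)$ be the number of letters of $w^{L,k}$ appearing after it. The letters after it form a final segment of $w^{L,k}$, and $c(p)$ is nonincreasing in $p$, since a later letter of $w^{R,k}$ has fewer letters after it. By Definition~\ref{def:ideal}, $(i,j)\in I_\Tree(w)$ iff the $(i-a+1)$-th letter of $w^{L,k}$ lies in that final segment for $p=j-k$, that is iff
$$i-a+1>m-c(j-k),\qquad\text{i.e.}\qquad i\ge k+1-c(j-k).$$
So $I_\Tree(w)\cap\RPos_{\Tree,k}$ is the shape whose column $k+p$ has height $c(p)$, with heights weakly decreasing to the right. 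It is therefore both left- and right-packed.

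Now I compute $L_\Tree(R_\Tree(X))$. In $X$, column $j$ contains exactly the $i\in[a,k]$ appearing after $j$ in $w$, so $c_j=c(p_j)$, where $p_j$ is the rank of $j$ within $w^{R,k}$. Hence $R_\Tree(X)$ has column heights forming the multiset $\{c(p)\}_p$, in the value order of the $j$'s rather than the rank order. Applying $L_\Tree$, row $i$ then contains $\#\{j : c_j\ge k+1-i\}$ cells, pushed to the left. This is exactly the row length of the shape with the same column heights sorted decreasingly, which is $I_\Tree(w)\cap\RPos_{\Tree,k}$ by the previous step.

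The other order is symmetric. $L_\Tree(X)$ records, for each $i$, the number $d_i$ of $j\in[k+1,b]$ appearing before $i$. The multiset $\{d_i\}$ equals the multiset of row lengths of the shape $I_\Tree(w)\cap \RPos_{\Tree,k}$, by the analogous statistic on the ranks of $w^{L,k}$. Right-packing then sorts these rows, giving $I_\Tree(w)$ again.

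The only delicate point is the bookkeeping: checking that the counts in Definition~\ref{def:pack} are per row and per column inside a single rectangle, and that ``rank in $w^{R,k}$'' versus ``value $j$'' is exactly what the second packing corrects. The monotonicity of $c(p)$ is what makes the final comparison immediate.
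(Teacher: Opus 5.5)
Your proof is correct and follows essentially the same route as the paper's. Both work rectangle by rectangle. Both observe that a single packing turns $X$ into $I_\Tree(w)$ up to a permutation of columns (resp.\ rows): the values of $w^{R,k}$ (resp.\ $w^{L,k}$) appear in place of their ranks. The second packing only sees row (resp.\ column) counts, so it restores $I_\Tree(w)$, which is packed in both directions; your explicit argument via the monotone column heights $c(p)$ fills in the step the paper compresses into ``it follows that''.
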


\begin{proof}
  The following characterizations of $L_\Tree(X)$ and $R_\Tree(X)$ follows from Definition~\ref{def:pack}:
   $L_\Tree(X)$ (resp. $R_\Tree(X)$) is the subset of $\RPos_\Tree$ such that for all $1\le k\le n-1$, $i_{\Tree,k}\le i\le k$ and $k+1\le j\le j_{\Tree,k}$, containing $(w^{L,k}_{i-i_{\Tree,k}+1},j)$ (resp. $(i,w^{R,k}_{j-k})$) whenever $w^{L,k}_i$ appears after $w^{R,k}_{j-k}$ in $w$. It follows that $L_\Tree(R_\Tree(X))$ and $R_\Tree(L_\Tree(X))$ contain $(i,j)$ if and only if $w^{L,k}_{i-i_{\Tree,k}+1}$ appears after $w^{R,k}_{j-k}$ in $w$, hence they are both equal to $I_\Tree(X)$.
\end{proof}

\begin{figure}
\centering
\includegraphics[width=0.4\linewidth]{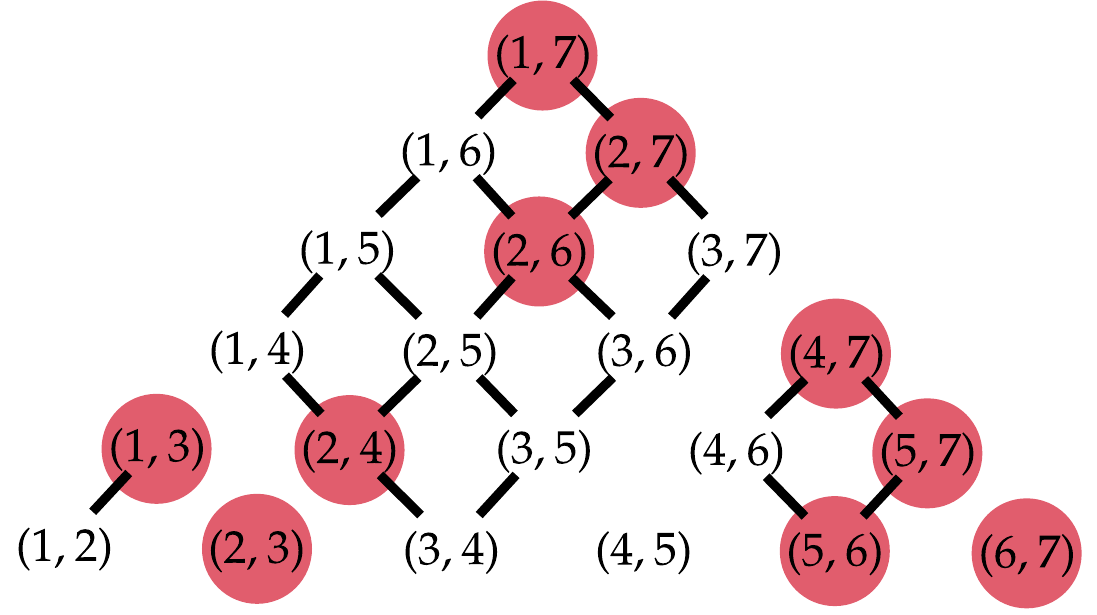}\raisebox{50pt}{$\longrightarrow$}
\includegraphics[width=0.4\linewidth]{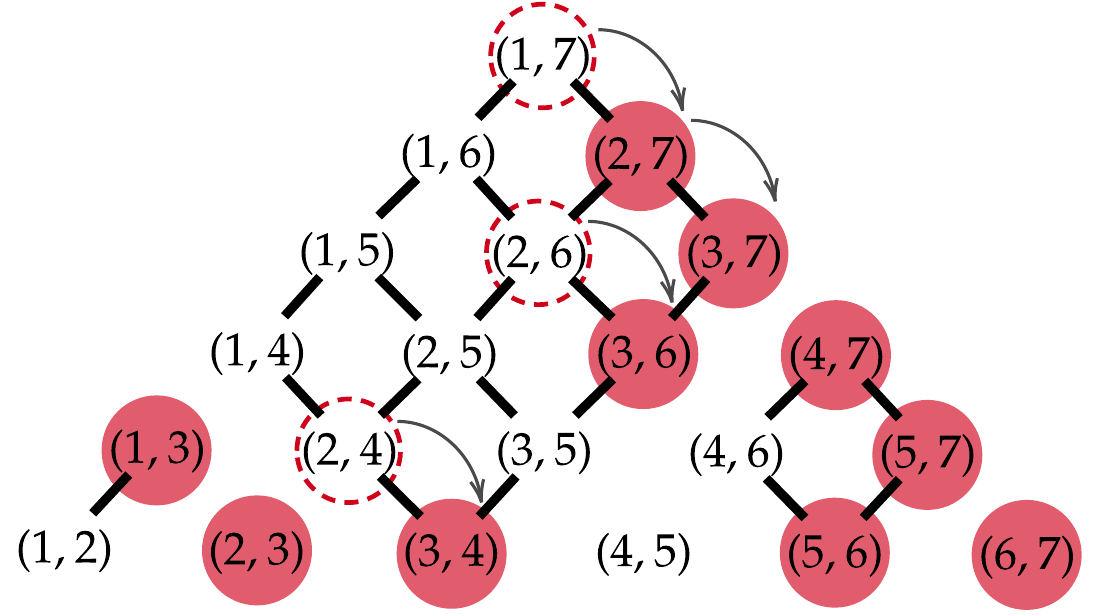}

$\downarrow$ \hspace{250pt} $\downarrow$ 

\raisebox{3pt}{\includegraphics[width=0.4\linewidth]{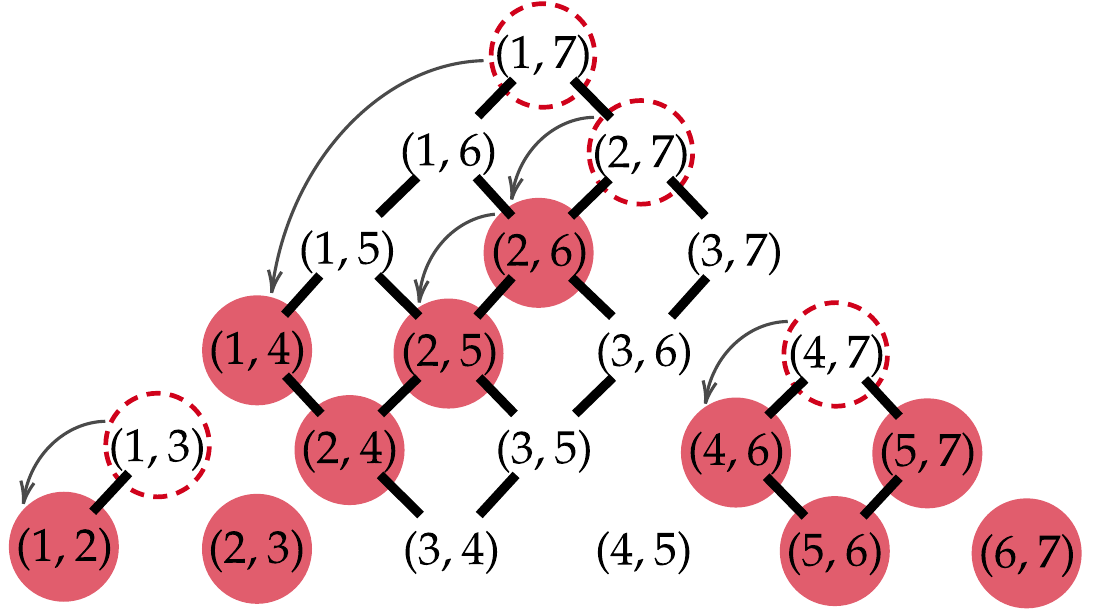}}\raisebox{50pt}{$\longrightarrow$}
\includegraphics[width=0.4\linewidth]{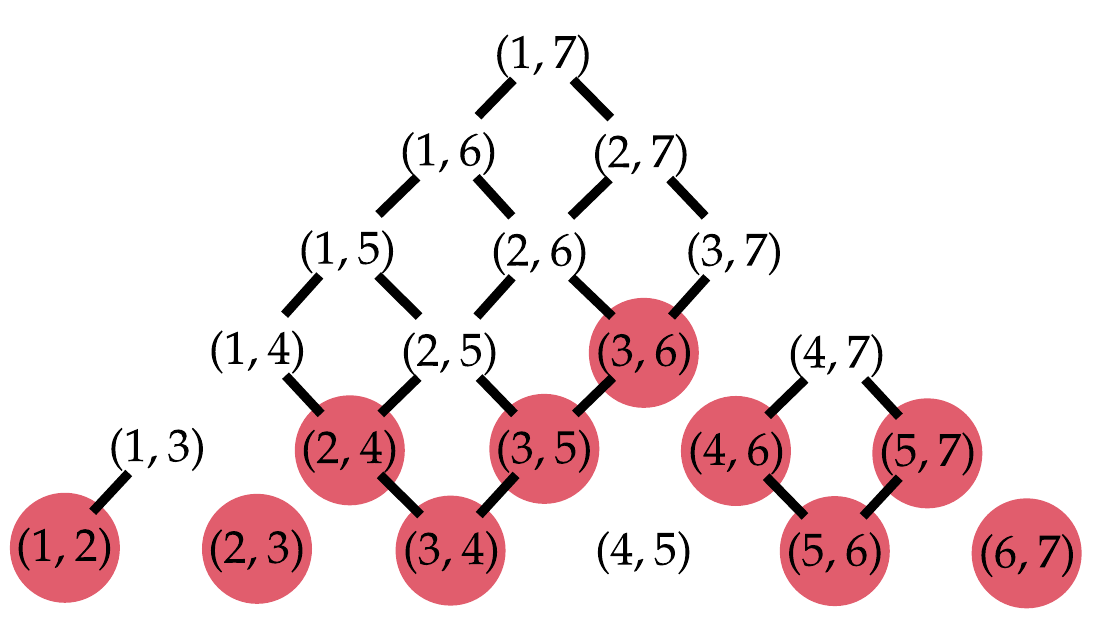} 
\caption{From left to right and from top to bottom: the inversion set of the permutation $w = 3714625$, its right-packing, its left-packing and the lower set $I_\Tree(w)$.}
\label{fig:bij1}
\end{figure}

We use the fact that lower sets of rectangular posets encode shuffles of subpermutations to prove that lower sets of $\RPos_\Tree$ are in bijection with permutations. As we can see in~\cref{fig:moposet9}, a shuffle can be read from the path obtained as the boundary between a lower set of $\RPos_{\Tree,k}$ and their complement, with values of $w^{L,k}$ (resp. $w^{R,k}$) corresponding to south-east (resp. north-east) steps.

\begin{thm}
	Let $\Tree\in \Trees_{n-1}$. $I_\Tree$ is a bijection between $\Ss_n$ and lower sets of $\RPos_\Tree$. We define a partial order $\le_\Tree$ on $\Ss_n$ where $v\le_\Tree w$ if and only if $I_\Tree(v)\subset I_\Tree(w)$. Then $(\Ss_n,\le_\Tree)$ is a distributive lattice.
	\label{pr:bij}
\end{thm}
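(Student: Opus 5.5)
I would prove bijectivity by a counting argument combined with injectivity, or more directly by an explicit inverse built recursively along the tree $\Tree$. The distributivity is then immediate. Since $I_\Tree$ is a bijection onto $\IdL(\RPos_\Tree)$, the poset $(\Ss_n,\le_\Tree)$ is by definition isomorphic to $\IdL(\RPos_\Tree)$ ordered by inclusion. Lower sets are closed under union and intersection, so this is a distributive lattice (Birkhoff, Section~\ref{sec:distri}). So the whole content is the bijection.

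First I record the shape of each block. Let $k$ be a node of $\Tree$ with left and right subtrees. Then $\RPos_{\Tree,k}$ is the rectangle $\{(i,j): i_{\Tree,k}\le i\le k,\ k+1\le j\le j_{\Tree,k}\}$, where $[i_{\Tree,k},k]$ is the set of leaf-intervals (values) of the left subtree of $k$ and $[k+1,j_{\Tree,k}]$ those of the right subtree. I would prove this by induction on the construction: when $(k,k+1)$ is chosen (as root of the current subtree), the current remaining region is the triangle on the value interval of that subtree. Removing the rectangle above $(k,k+1)$ leaves two disjoint triangles on $[i_{\Tree,k},k]$ and $[k+1,j_{\Tree,k}]$, exactly as in the proof of the preceding proposition. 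Hence $\RPos_\Tree$ is a disjoint union of rectangles $a_k\times b_k$, with $a_k=k-i_{\Tree,k}+1$ and $b_k=j_{\Tree,k}-k$.

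A lower set of an $a\times b$ rectangle is a lattice path, i.e.\ a shuffle of a word of length $a$ with one of length $b$. There are $\binom{a+b}{a}$ of these. The map $I_\Tree$ restricted to block $k$ sends $w$ to the relative interleaving of $w^{L,k}$ and $w^{R,k}$ inside the restriction of $w$ to the values $[i_{\Tree,k},j_{\Tree,k}]$. By Definition~\ref{def:ideal}, $(i,j)$ is included iff the $(i-i_{\Tree,k}+1)$-th letter of $w^{L,k}$ comes after the $(j-k)$-th letter of $w^{R,k}$. I would check that this set is a lower set: positions of letters in each subword increase, so moving down in $i$ or $j$ preserves the condition. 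Which shuffle it is determines the lower set, and conversely.

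Now the inverse, by induction on subtrees. For a subtree with value interval $V$ and root $k$, suppose we know by induction the restrictions $w|_{V_L}$ and $w|_{V_R}$, the restrictions to the left and right value intervals. The lower set in block $k$ prescribes a shuffle pattern. Interleaving $w|_{V_L}$ and $w|_{V_R}$ according to it recovers $w|_V$ uniquely, and every pattern gives a valid word on $V$. At leaves, $w|_{\{v\}}$ is trivial. At the root of $\Tree$, $V=[1,n]$ gives $w$. This shows that any family of lower sets, one per block, comes from exactly one $w\in\Ss_n$. So $I_\Tree$ is bijective, and consistently $n!=\prod_k\binom{a_k+b_k}{a_k}$.

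The main obstacle is the bookkeeping in the first step. One must identify $(i_{\Tree,k},j_{\Tree,k})$ with the value-interval endpoints of the subtree at $k$, and check that the indexing $w^{L,k}_{i-i_{\Tree,k}+1}$, $w^{R,k}_{j-k}$ matches the rectangle coordinates. I would handle this by the same "splitting into disconnected left and right parts" argument used in the preceding proposition.
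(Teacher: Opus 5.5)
Your proposal is correct and follows essentially the same route as the paper: you decompose $\RPos_\Tree$ into the root rectangle plus the posets of the two subtrees, read each block's lower set as the shuffle pattern of $w^{L,k}$ and $w^{R,k}$, and rebuild $w$ recursively along $\Tree$, with distributivity then immediate from $\IdL(\RPos_\Tree)$. The only difference is that the paper uses the recursion only for injectivity and gets surjectivity from the count $k!\,(n-k)!\binom nk=n!$, whereas your explicit inverse gives surjectivity directly and leaves the count as a consistency check.
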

\begin{proof}
$I_\Tree(w)$ is a lower set of $\RPos_\Tree$, consisting of lower sets of its connected components $\RPos_{\Tree,k}$. By Definition~\ref{def:ideal}, each of these lower sets encodes the shuffle of $w^{L,k}$ and $w^{R,k}$. Assume that we can obtain $w^{L,k}$ (resp. $w^{R,k}$) from the restriction of $I_\Tree(w)$ to $\RPos_{\Tree_1}$ (resp. $\RPos_{\Tree_2}$), where $T_1$ (resp. $T_2$) is the left (resp. right) child of $k$ in $T$. By an immediate recursion, we can reconstruct $w$ from $I_\Tree(w)$, so $I_\Tree$ is injective. We now prove that the number of lower sets of $\RPos_\Tree$ is equal to the number of permutations in $\Ss_n$. The poset $\RPos_\Tree$ consists in the disjoint union of $\RPos_{\Tree_L}$, $\RPos_{\Tree_R}$ and a rectangular poset of size $k\times (n-k)$ where $k$ is the root of $T$, and $\Tree_L$ and $\Tree_R$ its respective left and right children which are of size $k$ and $n-k$. By an immediate recursion, $\RPos_\Tree$ has exactly $k!\cdot (n-k)!\cdot \binom nk = n!$ lower sets. Hence, $I_\Tree$ is bijective.
\end{proof}

\begin{figure}
\centering
\includegraphics[width=0.6\linewidth]{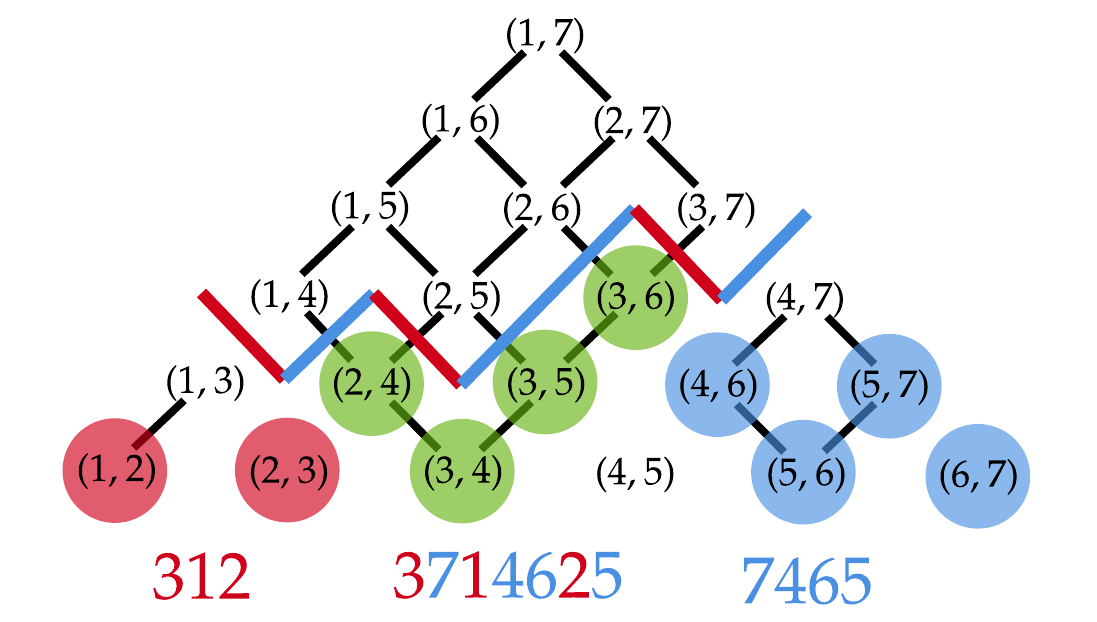}
\caption{The lower set $I_\Tree(w)$ with $w=3714625$ decomposed into lower sets of $\RPos_{\Tree_1}$ (in red), $\RPos_{\Tree_2}$ (in blue), and $\RPos_{\Tree,k}$ (in green). These encode respectively $w^{L,k}=312$, $w^{R,k}=7465$, and the shuffle of the two subpermutations.}
\label{fig:moposet9}
\end{figure}

\begin{rmk}
  The original middle order described by Bouvel, Ferrari, and Tenner~\cite{BFT24} is equal to ${(\Ss_n,\le_\Tree)}$ where $\Tree$ is a left-comb binary tree, \emph{i.e.}, the binary search tree obtained from the permutation $n \! - \! 1 \ n \! - \! 2\dots 2 1$.
\end{rmk}

We now describe cover relations of middle orders on $\Ss_n$.
\begin{prop}
	We have $v\lessdot w$ in $(\Ss_n,\le_\Tree)$ if and only if $w=v\circ (i,j)$ with $\ell(v)<\ell(w)$ and for all $ v^{-1}(i)<a<v^{-1}(j)$, $w(a)\notin [i_{\Tree,k},j_{\Tree,k}]$, where $k$ is such that $(i,j)\in \RPos_{\Tree,k}$.
	\label{pr:covers}
\end{prop}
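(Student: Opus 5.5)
Since $(\Ss_n,\le_\Tree)\cong \IdL(\RPos_\Tree)$ by Theorem~\ref{pr:bij}, I will use the fact that $v\lessdot w$ holds exactly when $I_\Tree(w)=I_\Tree(v)\sqcup\{x\}$ for a single element $x\in\RPos_\Tree$. The plan is to describe, for a permutation $v$ and an element $x=(p,q)\in\RPos_{\Tree,k}$ that is minimal in the complement of $I_\Tree(v)$, which permutation $w$ satisfies $I_\Tree(w)=I_\Tree(v)\cup\{x\}$. Then I will check that this description matches the one in the statement.

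\textbf{Step 1: adding one element is a single transposition.} By Definition~\ref{def:ideal}, $I_\Tree(v)\cap\RPos_{\Tree,k}$ encodes the shuffle of $v^{L,k}$ and $v^{R,k}$ as a lattice path. Adding a minimal element of the complement flips one ``corner'' of this path. In the shuffle, this swaps an adjacent pair: a letter $j$ of $v^{R,k}$ immediately followed by a letter $i$ of $v^{L,k}$. Here adjacency is measured inside the subword of $v$ formed by the values in $[i_{\Tree,k},j_{\Tree,k}]$. The words $v^{L,k}$ and $v^{R,k}$ themselves are unchanged.

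For the other components $\RPos_{\Tree,k'}$, I use the recursive reconstruction from the proof of Theorem~\ref{pr:bij}. Every other component only records the relative order of pairs of letters that both lie in a child interval, or that lie in intervals disjoint from $[i_{\Tree,k},j_{\Tree,k}]$. Their relative orders must therefore stay unchanged. The key consequence is that swapping the positions of $i$ and $j$ in $v$ preserves all of this data if and only if no value of $[i_{\Tree,k},j_{\Tree,k}]$ lies strictly between them. To see this, note that the intervals of the tree's nodes form a laminar family. Exchanging the positions of $i$ and $j$ keeps, for every other node $k'$, the relative positions of all values that node records, except through the $k$-shuffle itself. So I set $w=v\circ(v^{-1}(j),v^{-1}(i))$, written $v\circ(i,j)$ as in the paper's notation. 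Then $I_\Tree(w)=I_\Tree(v)\cup\{x\}$, and $\ell(w)=\ell(v)+1$ holds in the sense that $j$ now comes after $i$. I expect this to be the main obstacle. It needs careful bookkeeping of the laminar nested intervals to verify that nodes $k'$ which are ancestors of $k$ only see the block $[i_{\Tree,k},j_{\Tree,k}]$ through the positions of its letters relative to outside letters. Those relative positions are unchanged exactly when the swapped letters are adjacent among block letters.

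\textbf{Step 2: the converse.} Suppose $w=v\circ(i,j)$ with $\ell(v)<\ell(w)$ and no letter of $[i_{\Tree,k},j_{\Tree,k}]$ strictly between positions $v^{-1}(i)$ and $v^{-1}(j)$, where $(i,j)\in\RPos_{\Tree,k}$. I first argue that $i$ lies in $v^{L,k}$ and $j$ in $v^{R,k}$; this holds because $(i,j)\in\RPos_{\Tree,k}$ means $i\le k<j$ within the block. Then $i$ and $j$ are adjacent in the shuffle, so the argument of Step 1 run in reverse shows that $I_\Tree(w)$ differs from $I_\Tree(v)$ by exactly one element. That gives a cover.

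Conversely, any cover arises this way, by Step 1. The pair of values swapped is determined by the flipped corner, and this pair $(i,j)$ belongs to $\RPos_{\Tree,k}$ by the definition of the partition.
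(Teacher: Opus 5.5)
Your plan is sound: a cover adds a single $x\in\RPos_{\Tree,k}$, this flips one corner of the $k$-shuffle, and $w$ is $v$ with the two corresponding letters exchanged. It is a flattened version of the paper's proof, which instead inducts on the root $r$ of $\Tree$ via $\RPos_\Tree=\RPos_{\Tree,r}\sqcup\RPos_{\Tree_L}\sqcup\RPos_{\Tree_R}$. However, the step you flag as the main obstacle (no other component changes) is not proved, and the reasons you give for it are false for the ancestors of $k$. An ancestor $k'$ compares letters of $[i_{\Tree,k'},k']$ with letters of $[k'+1,j_{\Tree,k'}]$. So it does involve pairs with one letter in $[i_{\Tree,k},j_{\Tree,k}]$ and one outside it. The relative order of such pairs of values \emph{does} change whenever an outside letter sits between $i$ and $j$, even when $i,j$ are adjacent among block letters. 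For example, on $\Ss_4$ take $\Tree$ the search tree of $312$, $v=1423$, $w=2413$, and $(i,j)=(1,2)\in\RPos_{\Tree,1}$ with block $[1,3]$. This is a cover, with $I_\Tree(w)=I_\Tree(v)\cup\{(1,2)\}$, yet $1$ moves past $4$, the right letter of the root. So the claim that every other node keeps ``the relative positions of all values that node records'' is false. Adjacency among block letters is not what protects the ancestors.

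The missing observation is that $I_\Tree(w)\cap\RPos_{\Tree,k'}$ is rank-based. By Definition~\ref{def:ideal} it depends only on which positions of $w$ carry letters of $[i_{\Tree,k'},k']$ and which carry letters of $[k'+1,j_{\Tree,k'}]$, not on the values sitting there. Since $i$ and $j$ lie on the same side of every proper ancestor $k'$, exchanging them leaves both position sets unchanged. So ancestors are unaffected with no hypothesis at all. Nodes whose intervals are disjoint from $[i_{\Tree,k},j_{\Tree,k}]$ see an unchanged subword. The betweenness hypothesis is needed only at $k$ itself. There it gives a single corner flip with $w^{L,k}=v^{L,k}$ and $w^{R,k}=v^{R,k}$, which in turn leaves all descendants of $k$ untouched. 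In the paper's induction this comes for free: a change inside $\RPos_{\Tree_L}$ keeps $v^{R,r}$ and the root shuffle pattern, and exchanging two values of $v^{L,r}$ is exchanging them in $v$. The betweenness condition transfers because $[i_{\Tree,k},j_{\Tree,k}]\subseteq[1,r]$. Finally, your orientation is reversed. Since $x\notin I_\Tree(v)$, the letter $i$ of $v^{L,k}$ precedes $j$ in $v$ (adjacent among block letters) and follows it in $w$. With the order you wrote, you would get $\ell(w)=\ell(v)-1$.
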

\begin{proof}
    $\RPos_{\Tree}$ is the disjoint union of $\RPos_{\Tree,k}$ (a rectangular poset of size $k\times (n-k)$), $\RPos_{\Tree_L}$ and $\RPos_{\Tree_R}$, where $k$ is the root of $T$, and $\Tree_L$ and $\Tree_R$ are its left and right children. 
    We have $v\lessdot w$ in $(\Ss_n,\le_\Tree)$ if and only if the corresponding lower sets of $\RPos_{\Tree}$ differ by one element. If they differ by one element of $\RPos_{\Tree,k}$, $v$ and $w$ are shuffles of the same subpermutations $v^{L,k}=w^{L,k}$ and $v^{R,k}=w^{R,k}$, differing by the exchange of two adjacent values $i$ and $j$ with $1\le i\le k<j\le n$. This is indeed equivalent to the fact that for all $v^{-1}(i)<a<v^{-1}(j)$, $w(a)\notin [i_{\Tree,k},j_{\Tree,k}]=[1,n]$. If the lower sets $I_T(v)$ and $I_T(w)$ differ by one element in $\RPos_{\Tree_L}$ (resp. $\RPos_{\Tree_R}$), then $v^{L,k}\lessdot w^{L,k}$ in $(\Ss_k,\le_{\Tree_1})$ (resp. $v^{R,k}\lessdot w^{R,k}$ in $(\Ss_{n-k},\le_{\Tree_2})$). By induction on the size of $\RPos_{\Tree}$, we obtain the result.
    % TO DO Need to say that the reassembling creates no interference
\end{proof}

From the description of cover relations, we deduce that middle orders are between the weak and Bruhat orders.
\begin{prop}
    Middle orders $(\Ss_n,\le_\Tree)$ are refinements of the right weak order and coarsenings of the Bruhat order.
\end{prop}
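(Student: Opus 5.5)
Both claims reduce to statements about cover relations, since all three orders are finite posets and each is the transitive closure of its covers. For the first claim, that $\le_\Tree$ refines the right weak order, it suffices to show that every cover $v\lessdot_R w$ of the right weak order satisfies $v<_\Tree w$. For the second claim, that the Bruhat order refines $\le_\Tree$, it suffices to show that every cover of $(\Ss_n,\le_\Tree)$ is a Bruhat cover. Proposition~\ref{pr:covers} gives the needed description of the covers of $\le_\Tree$.

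\emph{Coarsening of the Bruhat order.} By Proposition~\ref{pr:covers}, if $v\lessdot w$ in $(\Ss_n,\le_\Tree)$, then $w=v\circ(i,j)$ with $\ell(v)<\ell(w)$. Here $(i,j)$ is understood through positions: $w$ is obtained from $v$ by exchanging the values $i<j$, which sit at positions $v^{-1}(i)<v^{-1}(j)$. We are also given that no value $w(a)$ at a position strictly between them lies in $[i_{\Tree,k},j_{\Tree,k}]\supseteq[i,j]$. The standard characterization of Bruhat covers in $\Ss_n$ is that $w=v\circ t$ for a transposition $t$ with $\ell(w)=\ell(v)+1$. Equivalently, no value strictly between $i$ and $j$ occurs at a position strictly between the two swapped positions. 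Our hypothesis gives exactly this, since $(i,j)\subset[i_{\Tree,k},j_{\Tree,k}]$. Hence $\ell(w)=\ell(v)+1$ and $v\lessdot w$ in Bruhat order. Even without checking the cover condition, $v<w$ in Bruhat order follows from $w=vt$ with $\ell(v)<\ell(w)$, and that is enough.

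\emph{Refinement of the weak order.} Let $w=v\,s_p$ with $\ell(w)=\ell(v)+1$. Then $w$ is obtained by swapping the adjacent entries $v(p)=i<j=v(p+1)$, and $N(w)=N(v)\sqcup\{(i,j)\}$. Since no position lies strictly between $p$ and $p+1$, the condition of Proposition~\ref{pr:covers} holds vacuously for whichever $k$ has $(i,j)\in\RPos_{\Tree,k}$, so $v\lessdot w$ in $\le_\Tree$.

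A more direct route avoids invoking the converse direction of Proposition~\ref{pr:covers}. Use Proposition~\ref{prop:packing} and Definition~\ref{def:ideal} to show $I_\Tree(v)\subset I_\Tree(w)$. For each $k$, the words $w^{L,k}$ and $w^{R,k}$ are obtained from $v^{L,k}$ and $v^{R,k}$ by at most one adjacent swap, or the relative order of one letter of $w^{L,k}$ and one letter of $w^{R,k}$ changes. In the shuffle encoding inside $\RPos_{\Tree,k}$ this only adds a box, and in the children's components the induction applies. This lets us conclude $v\le_\Tree w$.

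The main obstacle is exactly this verification: the new inversion adds, rather than removes, an element of the lower set, even when $i$ and $j$ both lie in $w^{L,k}$ or both in $w^{R,k}$. I would handle it by the same recursion on the root of $\Tree$ as in the proofs of Theorem~\ref{pr:bij} and Proposition~\ref{pr:covers}. At the root $k$, either $i\le k<j$, in which case the shuffle gains one box and the subwords are unchanged. Or $i,j$ lie on the same side, in which case the shuffle is unchanged and the corresponding subword undergoes a weak-order cover, so induction applies.
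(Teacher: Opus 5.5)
Your proof is correct and follows the paper's argument: you reduce both claims to cover relations, then compare the cover description of Proposition~\ref{pr:covers} with the standard characterizations of weak-order covers (adjacent swaps, for which the condition is vacuous) and of Bruhat covers (no intermediate value strictly between the swapped positions). You also use the correct inclusion $[i,j]\subseteq[i_{\Tree,k},j_{\Tree,k}]$ for $(i,j)\in\RPos_{\Tree,k}$, which is exactly what makes every $\le_\Tree$-cover a Bruhat cover; the paper's written proof states these inequalities the other way around, which is a typo.
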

\begin{proof}
   Covers of the right weak order are given by $v\lessdot w$ if and only if $w=v\circ (i,i+1)$ with $\ell(v)<\ell(w)$, and covers of the Bruhat order are given by $v\lessdot w$ if and only if $w=v\circ (i,j)$ with $\ell(v)<\ell(w)$ and for all $ v^{-1}(i)<a<v^{-1}(j)$, $w(a)\notin [i,j]$. Since if $(i,j)\in \RPos_{\Tree,k}$, $i\le i_{\Tree,k}<j_{\Tree,k}\le j$, by Proposition \ref{pr:covers} we have $$v\le_Rw \implies v\le_\Tree w\implies v\le w.$$
\end{proof}

\section{All distributive lattices between the weak and Bruhat orders}
\label{sec:exhaustivity}
We now prove that our middle orders on $\Ss_n$ are the only distributive lattices between the weak and Bruhat orders. For this we consider \defn{strict Bruhat edges}, \emph{i.e.}, covering relations of the Bruhat order that are not in the weak order. We then study how we can add these edges to the weak order to make a distributive lattice.

\subsection{Equivalence classes of edges}
% TO DO def distributivité dans les préliminaires
    We will use this simple property of distributive lattices throughout this section.
\begin{prop}
	In a distributive lattice $\L$, for all $x,y,z\in \L$, if $y$ and $z$ cover $x$, then $y\vee z$ covers $y$ and $z$. Dually, if $x$ covers $y$ and $z$, then $y$ and $z$ cover $y\wedge z$.
	\label{pr:rhombus}
\end{prop}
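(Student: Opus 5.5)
We will prove this with Birkhoff's representation theorem, recalled in Section~\ref{sec:distri}. It identifies $\L$ with $\IdL(\Pos)$, where $\Pos=\JIrr(\L)$ and the order is inclusion. Under this identification, join and meet are union and intersection of lower sets. We first note the form of cover relations: $A\lessdot B$ in $\IdL(\Pos)$ holds exactly when $A\subset B$ and $|B\setminus A|=1$. Indeed, if $B\setminus A$ contained two elements, take a minimal element $p$ of $B\setminus A$. Then $A\cup\{p\}$ is a lower set, since everything below $p$ lies in $B$ and hence, by minimality of $p$, in $A$. It also lies strictly between $A$ and $B$. The converse direction is immediate.

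Now suppose $y$ and $z$ both cover $x$, with $y\neq z$. Write $y=x\cup\{p\}$ and $z=x\cup\{q\}$ with $p\neq q$. Then $y\vee z=y\cup z=x\cup\{p,q\}$. This is a lower set because it is a union of lower sets. It differs from $y$ by the single element $q$ and from $z$ by the single element $p$. By the characterization above, $y\vee z$ covers both $y$ and $z$.

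The dual statement works the same way. If $x$ covers $y$ and $z$, with $y\neq z$, write $y=x\setminus\{p\}$ and $z=x\setminus\{q\}$. Then $y\wedge z=y\cap z=x\setminus\{p,q\}$, which is a lower set because it is an intersection of lower sets. It differs from each of $y$ and $z$ by one element, so $y$ and $z$ both cover $y\wedge z$. Alternatively, one can apply the first part to the dual lattice, which is again distributive.

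The only step requiring care is the characterization of covers as differing by exactly one element. That step rests on the minimal-element argument given in the first paragraph, so we do not expect a real obstacle. We implicitly assume $y\neq z$ throughout; the case $y=z$ is trivial.
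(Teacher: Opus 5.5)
Your proof is correct, but it takes a different route from the paper. You transport everything to $\IdL(\JIrr(\L))$ by Birkhoff's theorem. There, covers are exactly one-element extensions, so the claim reduces to noting that $x\cup\{p,q\}$ differs from $x\cup\{p\}$ and from $x\cup\{q\}$ by one element each.

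The paper never passes to lower sets and argues directly in $\L$ with the distributive law. Its printed chain of equalities is garbled, but the intended argument runs as follows. Take $w$ with $z\le w\le y\vee z$. Then $x\le w\wedge y\le y$ forces $w\wedge y\in\{x,y\}$. If $w\wedge y=y$, then $w\ge y\vee z$. Otherwise $w=w\wedge(y\vee z)=(w\wedge y)\vee(w\wedge z)=x\vee z=z$.

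Your version is more transparent and matches the paper's habit of reading everything off the poset of irreducibles. It uses finiteness twice: Birkhoff's representation only holds for finite distributive lattices, and your cover characterization needs a minimal element of $B\setminus A$. That is harmless here, since every lattice in the paper is finite. The direct argument, however, works in any distributive lattice. Replacing the distributive step by the modular law $w\wedge(y\vee z)=(w\wedge y)\vee z$ (valid since $z\le w$), it even works in any modular lattice.

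One small correction: the case $y=z$ is not trivial but must be excluded. In that case $y\vee z=y$ does not cover $y$, so the statement tacitly assumes $y\neq z$, as you do.
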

\begin{proof}
 Assume that there exists $w\in \L$ between $z$ and $y\vee z$. We have $z=z\wedge (y\vee w)=(z\vee y)\wedge (z\vee w)=w$, hence $y\vee z$ covers $z$. The dual proposition is proved similarly.
\end{proof}

\begin{defi}
	Let $1\le i\le a<b\le j\le n$. We define $\overline {(a,i,j,b)}$ as the set of edges $(v,w)$ of $(\Ss_n,\le)$ such that $w=v\circ (a,b)$ and $\{i,...,j\}$ is the biggest interval containing $\{a,...,b\}$ whose values are not between $a$ and $b$ in $v$ (and $w$).
	\label{def:1}
\end{defi}

For example, if $v=142653$ and $w=152643$, the edge $(v,w)$ is in $\overline{(4,3,5,5)}$. We will only consider sets of edges $\overline {(a,i,j,b)}$ where $i\neq 1$ or $j\neq n$, since if $(v,w)\in \overline {(a,1,n,b)}$, no value lies between $a$ and $b$, so $(v,w)$ would be an edge of the weak order $(\Ss_n,\le_R)$.

\begin{nota}
	Edges (\emph{i.e.}, pairs of permutations  covering one another in the Bruhat order) will be written as permutations with parentheses around the transposed values. We will sometimes replace consecutive entries of a permutation or of an edge with symbols $\word_i$.
	For example $\word_1(a\word_2 b)\word_3$ is an edge with transposed values $(a,b)$, between permutations of the form $\word_1a\word_2b\word_3$ and $\word_1b\word_2a\word_3$.
\end{nota}

\begin{lem}
	Let $E$ be the set of edges of a middle order, $1\le i\le a<b\le j\le n$, and $(v_1,w_1),(v_2,w_2)\in \overline {(a,i,j,b)}$ such that for all $i\le k\le j$, $k$ is on the same side of $a$ and $b$ in $v_1$ and $v_2$, \emph{i.e.}, $v_1^{-1}(k)< v_1^{-1}(a)< v_1^{-1}(b)$ if and only if $v_2^{-1}(k)< v_2^{-1}(a)< v_2^{-1}(b)$. Then $(v_1,w_1)\in E$ if and only if $(v_2,w_2)\in E$.
	\label{lem:1}
\end{lem}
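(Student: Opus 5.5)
The plan is to connect $(v_1,w_1)$ to $(v_2,w_2)$ by a chain of edges in $\overline{(a,i,j,b)}$. Consecutive edges in the chain will differ by an adjacent transposition of positions on both sides, and I will show that membership in $E$ is preserved at each step using Proposition~\ref{pr:rhombus}.

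First I record a preliminary fact. Let $\L$ be the middle order. Since $\L$ refines the right weak order and is refined by the Bruhat order, every cover of $\L$ is a Bruhat relation $v<w$. Every maximal chain of $\L$ is a Bruhat chain from $e$ to $w_0$. Moreover, $\L$ contains weak-order maximal chains of length $\ell(w_0)$, and distributive lattices are graded. Hence every cover of $\L$ raises length by exactly one, so covers of $\L$ are Bruhat covers, and $E$ is a subset of the Bruhat edges containing all right weak order edges.

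Next, the elementary move. Let $(v,w)\in\overline{(a,i,j,b)}$ with $w=v\circ(a,b)$. Let $s$ swap two adjacent positions, neither holding $a$ or $b$, such that $u:=vs$ still gives an edge $(u,us\cdots)=(u,u\circ(a,b))$ in the same class $\overline{(a,i,j,b)}$ with the same side pattern of the values of $[i,j]$. Say $\ell(u)=\ell(v)+1$; otherwise exchange the roles of $v$ and $u$. Then $v\lessdot u$ and $w\lessdot ws$ are weak edges, hence lie in $E$.

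Suppose $(v,w)\in E$. Then $u$ and $w$ both cover $v$ in $\L$, so by Proposition~\ref{pr:rhombus} the join $z=u\vee w$ covers both $u$ and $w$ in $\L$. By the preliminary fact, $z$ is a common Bruhat cover of $u$ and $w$ of length $\ell(v)+2$. I then want to show that $z=ws=u\circ(a,b)$, which gives $(u,u\circ(a,b))\in E$. For the converse direction I use the dual half of Proposition~\ref{pr:rhombus}: starting from $(u,u\circ(a,b))\in E$ and the weak edge $w\lessdot ws$, the element $ws$ covers both $u$ and $w$, so $u\wedge w$ is covered by both, and I would identify $u\wedge w=v$. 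This yields the equivalence at each step.

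Finally, connectivity. Among permutations $v$ giving an edge of $\overline{(a,i,j,b)}$ with a fixed side pattern, the following are fixed: the relative order of $a$, $b$ and the values of $[i,j]$, and the fact that $i-1$ and $j+1$ (when they exist) lie between $a$ and $b$. The values outside $[i,j]$ are otherwise free. Using adjacent swaps that never exchange two values of $[i,j]\cup\{a,b\}$, and never move $i-1$ or $j+1$ out of the segment between $a$ and $b$, I would sort $v_1$ into $v_2$ like a bubble sort. Swaps that move a value $c\notin[i,j]$ past $a$ or $b$ must be treated as a variant of the move: there the transposition $(a,b)$ occurs at shifted positions, but the same rhombus argument applies. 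The parabolic structure (fixed pattern on $[i,j]\cup\{a,b\}$) should make this connectivity routine.

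The main obstacle is the identification step: showing that the common Bruhat upper cover $z=u\vee w$ is forced to be $ws$, and dually that $u\wedge w=v$. Bruhat intervals of length $2$ have exactly two atoms, but $u$ and $w$ may have several common upper covers. This happens in particular when the swapped values $x,y$ lie between $a$ and $b$ in value and position, since then transpositions like $(x,b)$ can produce competitors. My first attempt would be to restrict the moves to a carefully chosen generating set where the competitors visibly fail. Concretely, I would check that any other candidate $z'$ with $z'>u$ and $z'>w$ in Bruhat would violate the requirement $z'\geq_\L u$. For this I would use the fact that $z'$ must be above $u$ and $w$ via covers of $\L$, and compare $z'$ to $ws$ using the weak-order edges already known to lie in $E$. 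If that fails, I would use the meet version of Proposition~\ref{pr:rhombus} in whichever direction the competitors disappear.
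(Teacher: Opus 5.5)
\paragraph{Same plan as the paper, but the key step is missing.} Your architecture matches the paper's proof: a chain of adjacent swaps, with Proposition~\ref{pr:rhombus} carrying membership in $E$ across each step. Your preliminary fact that covers of a middle order are Bruhat covers is correct; the paper uses it tacitly. However, the identification step you leave open is the whole content of the lemma, and you place the difficulty in the wrong spot.

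\paragraph{Swaps away from $a$ and $b$ are easy.} For a swap $s$ of two positions holding neither $a$ nor $b$ there is no competitor. Write $u=vs$ and $w=vt$, where $t$ transposes the positions of $a$ and $b$. Any common Bruhat upper cover $z=ut'=ws'$ gives $st'=ts'$. This product is not the identity (else $z=v$). It is not a $3$-cycle either, since its support would contain the four points of $\mathrm{supp}(s)\cup\mathrm{supp}(t)$. A product of two disjoint transpositions factors uniquely, so $t'=t$ and $z=vst=ws$. The same computation shows that $v$ is the only common lower cover of $u$ and $w$. Your scenario of swapped values lying between $a$ and $b$ both in value and in position cannot occur, because $(v,w)$ is a Bruhat cover.

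\paragraph{Your connectivity step forbids moves you need.} These same-side swaps must include swaps of two adjacent same-side values of $[i,j]$, which your connectivity paragraph forbids (note $[i,j]\cup\{a,b\}=[i,j]$). The hypothesis fixes only the side of each value of $[i,j]$, not their relative order. For instance, $25314$ and $52314$ both give edges of $\overline{(3,2,5,4)}$ with the same side pattern, but $2$ and $5$ appear in opposite orders.

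\paragraph{The genuine gap: crossing moves.} The real gap is in the moves where a value $c\notin[i,j]$ crosses $a$ or $b$, which you dismiss with ``the same rhombus argument applies''. Here a second candidate really exists. Take $v=\word_1\,c\,a\,\word_2\,b\,\word_3$ with $c<a$ and $u=\word_1\,a\,c\,\word_2\,b\,\word_3$. Then $\word_1\,a\,b\,\word_2\,c\,\word_3$ is a common Bruhat upper cover of $u$ and $w$ whenever $\word_2$ contains no value in $(c,a)$. In that case Proposition~\ref{pr:rhombus} cannot tell you which of the two candidates is $u\vee w$.

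What rules the competitor out is the definition of the class. The value $i-1$ lies between $a$ and $b$, so a value $c<a$ lying outside $[i,j]$ and outside that segment satisfies $c<i-1<a$. Hence $i-1\in\word_2$ is a value in $(c,a)$, and the competitor is not a cover. Symmetrically, $j+1$ handles the case $c>b$.

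In each crossing move, one of the two implications (via the join or via the meet, depending on the case) has no competitor. The other implication needs exactly this input. This is the only place where maximality of $[i,j]$ enters; the paper's parenthetical remark about $i-1$ is precisely this step. Your proposal never uses it. Your fallback, ``use the meet in whichever direction the competitors disappear'', gives only one implication per crossing move, whereas the lemma needs both. With these identifications supplied, your plan becomes the paper's proof.
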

\begin{proof}
	Let $1\le k<\ell\le n$. A permutation of the form $\word_1 k\ell\word_2  a\word_3 b\word_4$ is covered by $\word_1 k\ell\word_2  b\word_3 a\word_4$ and $\word_1 \ell k \word_2  a\word_3 b\word_4$ in the Bruhat order, whose only common upper cover is $\word_1 \ell k \word_2  b\word_3 a\word_4$. Hence, if $\word_1 k\ell \word_2  (a\word_3 b)\word_4\in E$, then $\word_1 \ell k \word_2  (a\word_3 b)\word_4\in E$ by Proposition \ref{pr:rhombus}. The converse is also true, and with the same reasoning we can prove that $$\word_1 (a\word_2k\ell \word_3 b)\word_4\in E\iff \word_1 (a\word_2\ell k \word_3 b)\word_4\in E$$
	and 
	$$\word_1 (a\word_2 b)\word_3 k\ell \word_4\in E\iff \word_1 (a\word_2 b)\word_3 \ell k \word_4\in E.$$
	We now show that if $k\notin \{i,...,j\}$, the position of $k$ with respect to $a$ and $b$ does not change its equivalence class. If $k<a$, we have $k<i-1<a<b$  and $i-1$ is between $a$ and $b$ in all edges of $\overline {(a,i,j,b)}$. A permutation of the form $\word_1 ka\word_2 b\word_3$ is covered by $\word_1 kb\word_2 a\word_3$ and $\word_1 ak\word_2 b\word_3$, whose only common upper cover is $\word_1 bk\word_2 a\word_3$ ($\word_1 ab\word_2 k\word_3$ is not an upper cover because $i-1$ lies between the transposed values). Hence, if $\word_1 k(a\word_2 b)\word_3\in E$, then $\word_1 (ak\word_2 b)\word_3\in E$. We can prove the converse without the assumption that $k\notin \{i,\dots, j\}$. Similarly, we prove that \[\word_1 (a\word_2 b)k\word_3\in E\iff \word_1 (a\word_2 kb)\word_3\in E. \qedhere\]
\end{proof}
We say that edges $(v_1,w_1)$ and $(v_2,w_2)$ are \defn{equivalent} when $(v_1,w_1)\in E$ if and only if $(v_2,w_2)\in E$. We say that $(v_1,w_1)$ \defn{implies} $(v_2,w_2)$  when $(v_1,w_1)\in E$ implies $(v_2,w_2)\in E$.
We now prove that the side of the transposition $(a,b)$ on which each value between $i$ and $j$ lies does not matter in the description of equivalence classes of edges.
\begin{prop}
	Let $E$ be the set of edges of a middle order, $1\le i\le a<b\le j\le n$, and $(v_1,w_1),(v_2,w_2)\in \overline {(a,i,j,b)}$. We have $(v_1,w_1)\in E$ if and only if $(v_2,w_2)\in E$. 
	\label{prop:1}
\end{prop}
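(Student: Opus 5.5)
By Lemma~\ref{lem:1}, whether an edge of $\overline{(a,i,j,b)}$ lies in $E$ depends only on the \emph{configuration} of the edge. This is the function assigning to each $k\in\{i,\dots,j\}\setminus\{a,b\}$ one of the labels ``left'' (before both transposed values) or ``right'' (after both). By definition of $\overline{(a,i,j,b)}$, no value of $\{i,\dots,j\}$ lies between $a$ and $b$, so these are the only two options. The plan is to show that moving a single value $k$ from the left side to the right side gives an equivalent edge. Since any two configurations are connected by such single moves, this yields the proposition.

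To move one value, I would reuse the rhombus argument of Proposition~\ref{pr:rhombus}, as in the second half of the proof of Lemma~\ref{lem:1}. There are two preparatory steps.
\begin{itemize}
\item Using Lemma~\ref{lem:1}, we may freely rearrange the values outside $\{i,\dots,j\}$ and permute values on the same side among themselves.
\item Since $i-1$ and $j+1$ (when they exist) lie between $a$ and $b$, we may place $k$ immediately to the left of $a$, giving an edge $\word_1 k(a\word_2 b)\word_3$.
\end{itemize}

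The case $k<a$ is then handled directly. The proof of Lemma~\ref{lem:1} shows that $\word_1 k(a\word_2 b)\word_3$ implies $\word_1 (ak\word_2 b)\word_3$. In the target edge $k$ sits between $a$ and $b$, so that edge lies in a different class $\overline{(a,i',j',b)}$ with $k\notin\{i',\dots,j'\}$. I would then chain further rhombi: slide $k$ next to $b$, giving $\word_1(a\word_2 kb)\word_3$, and then move it past $b$ using the symmetric statement $\word_1(a\word_2 b)k\word_3\iff\word_1(a\word_2kb)\word_3$. The conditions of Lemma~\ref{lem:1} concerning values outside the interval allow sliding $k$ across $\word_2$.

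The main obstacle is that the converse implications proved in Lemma~\ref{lem:1} are one-directional in the wrong place. The step $\word_1(ak\word_2 b)\word_3\Rightarrow\word_1k(a\word_2b)\word_3$ holds without hypothesis. The forward step, however, needs $ab\word_2 k$ not to be a Bruhat cover, which fails when $k\in\{i,\dots,j\}$.

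I would therefore try to get both directions from the two one-sided implications that are valid unconditionally: $k$ inside $\Rightarrow$ $k$ left, and $k$ inside $\Rightarrow$ $k$ right. For the reverse direction I would use the dual rhombus at the top, namely the second half of Proposition~\ref{pr:rhombus} applied to upper covers. Consider $\word_1 bk\word_2 a\word_3$, which covers $\word_1 kb\word_2 a\word_3$ and $\word_1 bk\word_2 a\word_3$ with $k,a$ swapped appropriately, and check that their meet is the common lower element. The result to aim for is $\word_1k(a\word_2b)\word_3\Rightarrow\word_1(ak\word_2b)\word_3$ when $k<a$. The cases $k>b$ are mirror images, obtained by replacing the values $x$ with $n+1-x$ together with reversal of positions.

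If this top-rhombus check fails because an extra common cover appears, I would instead use distributivity more globally. In that case I would compute the join $(\word_1 k a\word_2 b\word_3)\vee(\word_1 a\word_2 b k\word_3)$ within an interval isomorphic to a small symmetric group, and argue by counting covers.
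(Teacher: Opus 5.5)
There is a genuine gap. The implication your key step aims for is false: for $k\in\{i,\dots,j\}$ with $k<a$, it is not true that $\word_1k(a\word_2b)\word_3\Rightarrow\word_1(ak\word_2b)\word_3$. So neither a top-rhombus check nor a more global join computation can establish it.

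Take $n=4$ and let $\Tree$ be the binary search tree of $132$, so that $\RPos_{\Tree,3}=\{(2,4),(3,4)\}$ and $[i_{\Tree,3},j_{\Tree,3}]=[2,4]$.
\begin{itemize}
\item The edge $2(3\,1\,4)$, i.e.\ $2314\to 2413$, lies in $\overline{(3,2,4,4)}$ with $k=2$ on the left. It is a cover of $\le_\Tree$, since $I_\Tree(2314)=\{(1,2),(1,3)\}$ and $I_\Tree(2413)=\{(1,2),(1,3),(3,4)\}$.
\item Moving $k$ inside gives $(3\,2\,1\,4)$, i.e.\ $3214\to 4213$. This is not a cover of $\le_\Tree$, because the value $2\in[2,4]$ lies between $3$ and $4$.
\item The right-hand edge $(3\,1\,4)2$ is a cover, as the proposition demands.
\end{itemize}
Locally, the failure is exactly the second common upper cover you feared. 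When $k\in\{i,\dots,j\}$, the elements $\word_1kb\word_2a\word_3$ and $\word_1ak\word_2b\word_3$ have two common Bruhat upper covers, $\word_1bk\word_2a\word_3$ and $\word_1ab\word_2k\word_3$, and their join in $E$ may be the latter. The dual rhombus at $\word_1bk\word_2a\word_3$ only reproves the direction ``inside $\Rightarrow$ left'', which you already have.

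So the configuration with $k$ between $a$ and $b$ implies both the left and the right configurations but is implied by neither. Any route left $\to$ inside $\to$ right breaks at its first step. What is missing is an argument relating the two outer positions of $k$ directly, without passing through the inside configuration.

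The paper does not give a hand proof of this step either. It makes the same preliminary reduction via Lemma~\ref{lem:1}: compare $\word_1k(a~i\minusone~j\plusone~b)\word_2$ with $\word_1(a~i\minusone~j\plusone~b)k\word_2$, together with the variants lacking $i-1$ or $j+1$. It then observes that only five values are involved, so it suffices to treat $\Ss_5$. A SAT solver checks that every middle order on $\Ss_5$ is one of the $C_4$ constructed ones, and all of these satisfy the required invariance by Proposition~\ref{pr:covers}.

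Your fallback of working in a small symmetric-group interval could be completed along these lines. Note that a block of consecutive positions spans a coset that is an interval of both the weak and Bruhat orders, hence of the middle order. But it would have to be an exhaustive classification of the middle orders on that interval, not a derivation of the false implication.
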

\begin{proof}
	Let $(v,w)\in \overline {(a,i,j,b)}$ and $k\notin \{i,...,j\}$ such that $k$ lies on the left side of the transposition in $v$ and $w$. By Lemma \ref{lem:1}, the edge $(v,w)$ is equivalent to an edge of the form $\word_1k(a~i\minusone~j\plusone~b)\word_2$, $\word_1k(a~i\minusone~b)\word_2$ or $\word_1k(a~j\plusone~b)\word_2$. To prove that these edges are respectively equivalent to $\word_1(a~i\minusone~j\plusone~b)k\word_2$, $\word_1(a \, i \minusone \, b)k\word_2$ and $\word_1(a~j\plusone~b)k\word_2$, it is enough to prove the result for permutations of length $5$. We translated logical relations between edges into conjunctive normal form (CNF), and used the SAT solver Glucose~\cite{AuSi18} to prove there exists no middle order on $\Ss_5$ different than those we constructed (see code in Appendix \ref{app:1}). Since edges of these middle orders satisfy the equivalence relations described above, the result follows.
\end{proof}

\begin{figure}
	\centering
	\includegraphics[width=0.5\linewidth]{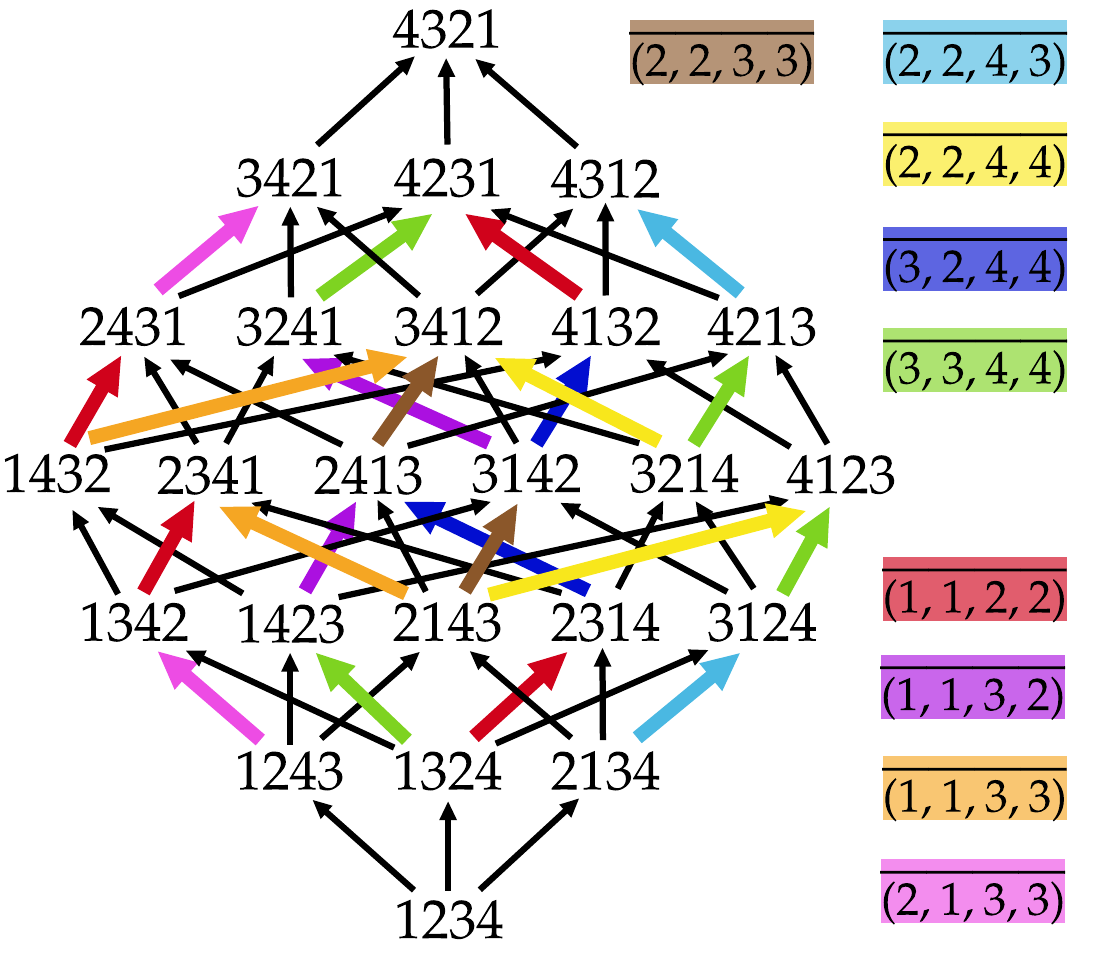}
	\caption{Equivalence classes of strict Bruhat edges in $\Ss_4$.}
	\label{fig:equivalenceclasses}
\end{figure}

\begin{prop}
   \label{pr:exclusion}
	Let $E$ be the set of edges of a middle order. For all $1\le i<j<k\le n$, we have
	\begin{equation*}
		\left[\overline{(i,1,k\minusone,j)}\subset E\right]\iff \left[\overline{(j,i\plusone,n,k)}\not\subset E\right],
	\end{equation*}
	for which we write $\overline{(i,1,k\minusone,j)}\oplus \overline{(j,i\plusone,n,k)}$.
\end{prop}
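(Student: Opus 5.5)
The plan is to reduce the statement to a single copy of $\Ss_3$ sitting inside $\Ss_n$, and then check by hand that a distributive lattice between the weak and Bruhat orders on $\Ss_3$ contains exactly one of the two strict Bruhat edges. By Proposition~\ref{prop:1}, each class $\overline{(a,i,j,b)}$ is either entirely contained in $E$ or disjoint from $E$. It is therefore enough to find one edge $e_1\in\overline{(i,1,k\minusone,j)}$ and one edge $e_2\in\overline{(j,i\plusone,n,k)}$, and to show that exactly one of $e_1,e_2$ lies in $E$.

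\emph{Choice of the two edges.} Fix words $P,Q$ containing all values other than $i,j,k$, for instance in increasing order, and put
\[
v_0:=P\,i\,j\,k\,Q, \qquad v_1:=P\,k\,j\,i\,Q .
\]
The six permutations obtained by rearranging the three adjacent positions of $i,j,k$ form the coset $v_0\langle s_p,s_{p+1}\rangle$, where $p$ is the position of $i$ in $v_0$. The two edges are
\[
e_1 := P\,(i\,k\,j)\,Q, \qquad e_2 := P\,(j\,i\,k)\,Q .
\]
\begin{itemize}
\item In $e_1$ the transposed values are $i,j$ and the only value between them is $k$. Hence the largest interval containing $\{i,\dots,j\}$ with no value between $i$ and $j$ is $\{1,\dots,k\minusone\}$, so $e_1\in\overline{(i,1,k\minusone,j)}$.
\item In $e_2$ the transposed values are $j,k$ and only $i$ lies between them, so $e_2\in\overline{(j,i\plusone,n,k)}$.
\end{itemize}

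\emph{Reduction to $\Ss_3$.} I would first check that $[v_0,v_1]$ in the Bruhat order is exactly the six-element coset. Indeed, any $u$ with $v_0\le u\le v_1$ must agree with $v_0$ outside these three positions, by the tableau criterion for Bruhat comparison, because $v_0$ and $v_1$ differ only there. Under the obvious identification, this coset is the Bruhat order of $\Ss_3$, and it is also the weak interval $[v_0,v_1]_R$.

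Since the middle order $\L=(\Ss_n,E)$ lies between the weak and Bruhat orders, its interval $[v_0,v_1]_\L$ has the same six elements. It contains the weak hexagon and is contained in the Bruhat order of $\Ss_3$. The only possible extra cover relations are the two strict Bruhat edges, which are $e_1$ and $e_2$. This interval is a distributive lattice, being an interval of a distributive lattice.

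One point needs care here: a cover relation of $\L$ between two elements of the interval is still a cover relation inside the interval, and conversely. So $e_1\in E$ if and only if $e_1$ is an edge of the interval, and likewise for $e_2$. I expect this identification, together with the Bruhat-interval claim above, to be the main (if mild) obstacle; everything else is finite.

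\emph{Case analysis in $\Ss_3$.}
\begin{itemize}
\item \emph{Neither edge.} The interval is the hexagon, which is not distributive. For example, $132$ and $213$ cover $123$, but their join $321$ does not cover them, contradicting Proposition~\ref{pr:rhombus}.
\item \emph{Both edges.} The interval is the Bruhat order of $\Ss_3$. Here $312$ and $231$ have the two incomparable maximal lower bounds $132$ and $213$, so it is not a lattice.
\item \emph{Exactly one edge.} Adding only $e_1$ gives a poset isomorphic to the lower sets of a $2$-element chain plus a disjoint point, which is distributive (this is the case $T$ a left comb, consistent with Theorem~\ref{pr:bij}). Adding only $e_2$ is the mirror case.
\end{itemize}
Hence exactly one of $e_1,e_2$ is in $E$. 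By Proposition~\ref{prop:1}, exactly one of the two classes is contained in $E$, which is the statement.
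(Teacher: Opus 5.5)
Your proof is correct and is essentially the paper's argument: it uses the same configuration $\word_1\,i\,j\,k\,\word_2$, the same two strict edges $\word_1(i\,k\,j)\word_2$ and $\word_1(j\,i\,k)\word_2$, distributivity to force exactly one of them, and Proposition~\ref{prop:1} to pass to the equivalence classes. The difference is only in packaging. You localize to the interval $[v_0,v_1]$, which costs the tableau-criterion check (valid here because the three positions are contiguous), and then run a four-case check; the paper instead applies Proposition~\ref{pr:rhombus} directly to the join of $\word_1\,j\,i\,k\,\word_2$ and $\word_1\,i\,k\,j\,\word_2$ in the whole lattice, and leaves implicit the ``not both'' direction (uniqueness of the join) that you make explicit.
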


\begin{proof}
% TO DO citer proposition 4.5
The permutation $\word_1 \, i \, j \, k \, \word_2$ is covered by $\word_1 \, j \, i \, k \, \word_2$ and $\word_1 \, i \, k \, j \, \word_2$, which have exactly two common upper covers $\word_1 \, j \, k \, i \, \word_2$ and $\word_1 \, k \, i \, j \, \word_2$ in the Bruhat order. Hence, we have either $\word_1 \, (j \, i \, k) \, \word_2\in E$ or  $\word_1 \, (i \, k \, j) \, \word_2\in E$ by Proposition~\ref{pr:rhombus}. This relation translates on equivalence classes of edges by Proposition~\ref{prop:1}.
\end{proof}

We now study the implication relations between equivalence classes of edges.
\begin{prop}
	Let $E$ be the set of edges of a middle order, $1\le i_1\le a_1<b_1\le j_1\le n$, $1\le i_2\le a_2<b_2\le j_2\le n$, $(v_1,w_1)\in \overline {(a_1,i_1,j_1,b_1)}$ and $(v_2,w_2)\in \overline {(a_2,i_2,j_2,b_2)}$. If $a_1\le a_2$, $i_1\ge i_2$, $j_1\le j_2$ and $b_1\ge b_2$, then we have
	$$\left[\overline {(a_1,i_1,j_1,b_1)}\subset E\right]\implies \left[\overline {(a_2,i_2,j_2,b_2)}\subset E\right].$$
	\label{pr:implications}
\end{prop}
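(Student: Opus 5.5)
By transitivity, it suffices to prove the implication when the two quadruples differ in a single coordinate by one step. That is, we show that $\overline{(a,i,j,b)}\subset E$ implies each of $\overline{(a+1,i,j,b)}\subset E$, $\overline{(a,i-1,j,b)}\subset E$, $\overline{(a,i,j+1,b)}\subset E$ and $\overline{(a,i,j,b-1)}\subset E$, whenever the target quadruple is valid (i.e. $i\le a<b\le j$). By Proposition~\ref{prop:1}, membership of a whole class is decided by a single representative. So for each elementary move it is enough to exhibit one edge of the source class and one edge of the target class that are linked by a rhombus argument in the spirit of Lemma~\ref{lem:1} and Proposition~\ref{pr:rhombus}.

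For the moves changing $i$ or $j$, I would take an edge $\word_1(a\,\word_2\,b)\word_3$ in $\overline{(a,i,j,b)}$ in which $i-1$ lies strictly between $a$ and $b$, placed immediately after $a$, say $\word_1(a\; i\minusone\; \word_2\, b)\word_3$. Consider the permutation $\word_1\, a\; i\minusone\,\word_2\, b\,\word_3$ in the Bruhat order. It is covered by the weak edge obtained by swapping $a$ and $i-1$, and by our strict edge. I would then identify the common upper covers of these two elements, as in the proof of Lemma~\ref{lem:1}.

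The aim is to show that the edge from $\word_1\, i\minusone\; a\,\word_2\, b\,\word_3$ to $\word_1\, i\minusone\; b\,\word_2\, a\,\word_3$ lies in a class $\overline{(a,i',j,b)}$ with $i'\le i-1$. It should be forced into $E$ once our edge is, using the dual form of Proposition~\ref{pr:rhombus}: if $x$ covers $y$ and $z$ in $E$, then $y\wedge z$ is covered by both. The case of $j$ is symmetric, using $j+1$.

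For the moves changing $a$ or $b$, take $a<a+1<b$ with $a+1$ located outside the segment between $a$ and $b$, adjacent to $a$, for instance $\word_1\, a\minusone\ldots$. Then use the rhombus between the weak swap of $a$ and $a+1$ and the transposition $(a,b)$ versus $(a+1,b)$. After swapping $a$ and $a+1$, the transposition of $b$ with $a+1$ gives an edge whose largest free interval is still $\{i,\dots,j\}$ (or larger), hence an edge of the class $\overline{(a+1,i,j,b)}$. A symmetric argument handles $b-1$.

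The main obstacle is the local verification at each rhombus. We need that the two lower elements have a unique common upper cover in the Bruhat order (or, dually, a unique common lower cover), so that Proposition~\ref{pr:rhombus} actually forces the new edge, as opposed to the two-cover situation of Proposition~\ref{pr:exclusion}. We also need the resulting edge to land in exactly the intended class. I would first try to reduce each case to small permutations, of length at most $5$ or $6$, by keeping only the relevant values $a,b,a\pm1,b\pm1,i-1,j+1$, as is done in Lemma~\ref{lem:1} and Proposition~\ref{prop:1}. The uniqueness of covers would then be checked by hand, with the SAT-verified $\Ss_5$ classification from Proposition~\ref{prop:1} as a fallback for the stubborn configurations.
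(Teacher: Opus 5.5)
Your reduction to single-step moves is fine, and your $i$- and $j$-moves are the paper's argument once the rhombus is oriented correctly. Swapping $a$ and $i-1$ in $\word_1\,a\;i\minusone\,\word_2\,b\,\word_3$ goes \emph{down}, not up. The apex is $\word_1\,b\;i\minusone\,\word_2\,a\,\word_3$, which covers $\word_1\,a\;i\minusone\,\word_2\,b\,\word_3$ (via your edge) and $\word_1\,i\minusone\;b\,\word_2\,a\,\word_3$ (weak). These two have the unique common lower cover $\word_1\,i\minusone\;a\,\word_2\,b\,\word_3$. You also need $i-2$ inside $\word_2$ to land exactly in $\overline{(a,i\minusone,j,b)}$.

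The genuine gap is in the $a$- and $b$-moves. Your square is $u=\word_1\,a\;a\plusone\,\word_2\,b\,\word_3$, $y=\word_1\,a\plusone\;a\,\word_2\,b\,\word_3$, $x=\word_1\,a\plusone\;b\,\word_2\,a\,\word_3$, $z=\word_1\,a\;b\,\word_2\,a\plusone\,\word_3$. You know $(u,y)$ and $(y,x)$ are edges and want $(u,z)$. Proposition~\ref{pr:rhombus} at $u$ presupposes $(u,z)$. Its dual at $x$ needs the fourth side $(z,x)$, an edge of $\overline{(a,i,b\minusone,a\plusone)}$, which your hypothesis does not provide and which can fail. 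Take $\Tree$ the binary search tree of $1423$ and $x=34152$, $y=32154$, $z=24153$, $u=23154$. By Proposition~\ref{pr:covers}, $(y,x)$ and $(u,z)$ are covers of $(\Ss_5,\le_\Tree)$, but $(z,x)$ is not, since $4\in[i_{\Tree,2},j_{\Tree,2}]=[2,4]$ lies between $2$ and $3$. In fact $y$ and $z$ have two common upper covers, $x$ and $\word_1\,b\;a\,\word_2\,a\plusone\,\word_3$. So this square is exactly the two-cover situation you meant to avoid, and Proposition~\ref{pr:rhombus} applied to it yields no implication from $(y,x)$ to $(u,z)$.

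The missing ingredient is Proposition~\ref{pr:exclusion}, applied on the boundary. For $j=n$, the paper gets the $b$-move from three facts: $\overline{(a,i,n,b)}\oplus\overline{(i\minusone,1,b\minusone,a)}$, $\overline{(a,i,n,b\minusone)}\oplus\overline{(i\minusone,1,b-2,a)}$, and the $j$-move $\overline{(i\minusone,1,b-2,a)}\Rightarrow\overline{(i\minusone,1,b\minusone,a)}$. For the $a$-move it uses that $\word_1\,a\plusone\;b\;i\minusone\;a\,\word_2$ covers $\word_1\,i\minusone\;b\;a\plusone\;a\,\word_2$ and $\word_1\,a\plusone\;a\;i\minusone\;b\,\word_2$, which have \emph{no} common lower cover. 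Hence $\overline{(a,i,n,b)}$ and $\overline{(i\minusone,1,b\minusone,a\plusone)}$ cannot both lie in $E$, and $\overline{(i\minusone,1,b\minusone,a\plusone)}\oplus\overline{(a\plusone,i,n,b)}$ concludes. The case $i=1$ is symmetric.

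Be aware that the paper handles the interior case $i>1$, $j<n$ with precisely your square, so the same objection applies to that step. A sound way to finish is induction on $(i-1)+(n-j)$. From $\overline{(a,i,j,b)}\subset E$, obtain $\overline{(a,i,j\plusone,b)}$ and $\overline{(a,i\minusone,j,b)}$ by $j$- and $i$-moves. Apply the induction hypothesis to both, then conclude $\overline{(a\plusone,i,j,b)}\subset E$ by Proposition~\ref{pr:leftright}, whose proof uses only $i$/$j$-moves and Proposition~\ref{pr:exclusion}. The $b$-move is analogous.
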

\begin{proof}
    We first prove that edges of $\overline {(a,i,j,b)}$ imply edges of $\overline {(a,i\minusone,j,b)}$ (if $i>1$) and $\overline {(a,i,j\plusone,b)}$ (if $j<n$). A permutation of the form $\word_1\, b \, i\minusone \, \word_2 \, a \, \word_3$ covers $\word_1\, i\minusone\, b \,\word_2\, a \, \word_3$ and $\word_1\, a \, i\minusone \, \word_2\, b \, \word_3$, whose only common lower cover is $\word_1\, i-1 \, a \,\word_2\, b \, \word_3$.  This shows that $\word_1\, (a \, i-1 \, \word_2 \, b) \, \word_3$ implies $\word_1\,  i-1\,(a  \, \word_2 \, b) \, \word_3$, hence if $\word_2$ contains $i-2$, or if $i=2$, edges of $\overline {(a,i,j,b)}$ imply edges of $\overline {(a,i-1,j,b)}$. By symmetry, edges of $\overline {(a,i,j,b)}$ imply edges of $\overline {(a,i,j+1,b)}$.
    % TO DO expliquer le "similar argument"
    
    We now prove that if $a+1<b$, edges of $\overline {(a,i,j,b)}$ imply edges of $\overline {(a\plusone,i,j,b)}$ and $\overline {(a,i,j,b\minusone)}$. If $i>1$, $j<n$ and $\word_2$ contains $i-1$ and $j+1$, the permutation $\word_1\, a\plusone\, b \,\word_2 \, a \, \word_3$ covers $\word_1\, a\plusone\, a \,\word_2 \, b \, \word_3$ and $\word_1\, a\, b \,\word_2 \, a\plusone \, \word_3$ whose only common lower cover is $\word_1\, a\, a\plusone \,\word_2 \, b \, \word_3$, hence in this case $\word_1\, a\plusone\, (a \,\word_2 \, b) \, \word_3$ implies $\word_1\, a\, (a\plusone \,\word_2 \, b) \, \word_3$.
    If $j=n$, by Proposition \ref{pr:exclusion} we have $\overline{(a,i,n,b)}\oplus \overline{(i\minusone,1,b\minusone,a)}$ and $\overline{(a,i,n,b\minusone)}\oplus \overline{(i\minusone,1,b \! - \! 2,a)}$, and we proved that $\overline{(i\minusone,1,b \! - \! 2,a)}$ implies $\overline{(i\minusone,1,b\minusone,a)}$, hence $\overline{(a,i,n,b)}$ implies $\overline{(a,i,n,b\minusone)}$.
    
    The permutation $\word_1\, a\plusone \, b \, i\minusone \, a \, \word_2$ covers $\word_1 \, i\minusone\, b \, a\plusone \, a \, \word_2$ and $\word_1\, a\plusone \, a \, i\minusone \, b \, \word_2$ which have no common lower cover, so $\word_1 \,( i\minusone\, b \, a\plusone) \, a \, \word_2\in  \overline {(i\minusone,1,b\minusone,a\plusone)}$ and $\word_1\, a\plusone \, (a \, i\minusone \, b)\, \word_2 \in  \overline {(a,i,n,b)}$ cannot be edges of a middle order simultaneously. By Proposition \ref{pr:exclusion}, we have $\overline {(i\minusone,1,b\minusone,a\plusone)} \oplus \overline {(a\plusone,i,n,b)}$, hence $\overline{(a,i,n,b)}$ implies $\overline{(a\plusone,i,n,b)}$.
    
    By symmetry, $\overline{(a,1,j,b)}$ implies $\overline{(a\plusone,1,j,b)}$ and $\overline{(a,1,j,b)}$ implies $\overline{(a,1,j,b\minusone)}$.
    % TO DO itemize les différents cas
\end{proof}

We distinguish two types of strict Bruhat edges. In $\Ss_n$, an edge $(v,w)\in \overline {(a,i,j,b)}$ is said to be a \defn{left edge} if $i=1$. It is said to be a \defn{right edge} if $j=n$.

\begin{figure}[H]
	\centering
	\includegraphics[width=0.9\linewidth]{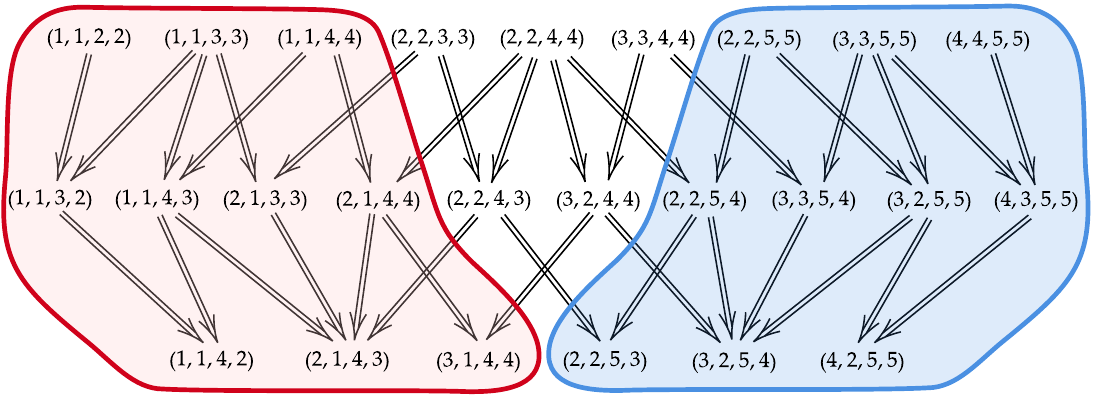}
	\caption{\centering Implications between equivalence classes of strict Bruhat edges in $\Ss_5$, with the left and right edges highlighted.}
	\label{fig:implications5}
\end{figure}

We now show that a middle order is characterized by its left edges, or equivalently by its right edges.
\begin{prop}
	Let $E$ be the set of edges of a middle order. For all $1 < i\le a<b\le j < n$, we have
	\begin{equation*}
		\left[\overline {(a,i,j\plusone,b)}\subset E\right]\wedge \left[\overline {(a,i\minusone,j,b)}\subset E\right]\implies \left[\overline {(a,i,j,b)}\subset E\right].
	\end{equation*}
	\label{pr:leftright}
\end{prop}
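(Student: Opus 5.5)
The plan is to realize the three classes inside one small configuration of permutations and let distributivity force the conclusion, in the same way as in the proofs of Lemma~\ref{lem:1} and Proposition~\ref{pr:implications}. By Proposition~\ref{prop:1}, membership of a whole class in $E$ is decided by any single representative. So it is enough to exhibit one edge of each of $\overline{(a,i,j\plusone,b)}$, $\overline{(a,i\minusone,j,b)}$ and $\overline{(a,i,j,b)}$ that are linked by covering relations which Proposition~\ref{pr:rhombus} controls.

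The natural representatives differ only in whether $i\minusone$ and $j\plusone$ lie between $a$ and $b$. Let $\word_2$ contain $i\minusone-1$ and $j\plusone+1$ whenever these values exist, and put every value of $\{i,\dots,j\}\setminus\{a,b\}$ outside the window. Consider
\[
p=\word_1\, a\; i\minusone\; \word_2\; j\plusone\; b\, \word_3 ,\quad
q_1=\word_1\, i\minusone\; a\; \word_2\; j\plusone\; b\, \word_3 ,\quad
q_2=\word_1\, a\; i\minusone\; \word_2\; b\; j\plusone\, \word_3 .
\]
\begin{itemize}
\item The $(a,b)$-edge at $p$ has $i\minusone$ and $j\plusone$ inside the window, so it lies in $\overline{(a,i,j,b)}$.
\item The $(a,b)$-edge at $q_1$ lies in $\overline{(a,i\minusone,j,b)}$.
\item The $(a,b)$-edge at $q_2$ lies in $\overline{(a,i,j\plusone,b)}$.
\item The relations $q_1\lessdot p$ and $q_2\lessdot p$ are right weak order covers, so they belong to $E$.
\end{itemize}
Write $r_1,r_2,t$ for the images of $q_1,q_2,p$ under the transposition $(a,b)$. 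By hypothesis $q_1\lessdot r_1$ and $q_2\lessdot r_2$ are in $E$, and $r_1\lessdot t$ and $r_2\lessdot t$ are again weak covers. I would then apply Proposition~\ref{pr:rhombus} to the pairs of covers $\{q_1\lessdot p,\ q_1\lessdot r_1\}$ and $\{q_2\lessdot p,\ q_2\lessdot r_2\}$. The joins $p\vee r_1$ and $p\vee r_2$ must each cover both of their arguments. The goal is to show, by listing Bruhat upper covers, that the only candidate for $p\vee r_1$ compatible with both $p\vee r_1$ and $p\vee r_2$ lying below $t$ is $t$ itself. That would give $p\lessdot t$ in the middle order, which is exactly the required edge of $\overline{(a,i,j,b)}$.

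The main obstacle is that $p\to t$ is not a plain rhombus: $p$ and $t$ differ by the single transposition $(a,b)$, but the surrounding squares are hexagonal in the Bruhat order. So the ``only common cover'' computation is delicate, because other values in $\word_2$ may create extra Bruhat covers. If the direct cover analysis does not close cleanly, I would reduce to a bounded pattern instead. By Lemma~\ref{lem:1} and Proposition~\ref{prop:1}, every letter of $\word_1,\word_2,\word_3$ other than $i\minusone-1$, $i\minusone$, $j\plusone$, $j\plusone+1$, $a$, $b$ can be moved or ignored without changing the equivalence class. The statement then reduces to permutations of at most six letters, whose values are $a,b,i\minusone,j\plusone$ and possibly $i\minusone-1$ and $j\plusone+1$. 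As in Proposition~\ref{prop:1}, the remaining finite implication can then be checked by encoding the relations of Propositions~\ref{pr:rhombus}, \ref{pr:exclusion} and \ref{pr:implications} as a CNF and verifying it on $\Ss_6$ with a SAT solver. Boundary cases ($i=2$ or $j=n-1$) are handled by the same configuration with the absent neighbours omitted.
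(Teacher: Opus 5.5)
Your setup is right. The three class memberships and the two weak covers all check out, and your square $q_1\lessdot p$, $q_1\lessdot r_1$ is exactly the one used in the paper. The gap is that the decisive step is never carried out, and the criterion you state for it is circular. Every candidate for $p\vee r_1$ has the same length as $t$, so requiring it to lie below $t$ in the middle order already presupposes $p\le t$ there, which is the conclusion. The SAT fallback does not repair this either. Lemma~\ref{lem:1} and Proposition~\ref{prop:1} let you move letters within a class of a fixed middle order on $\Ss_n$, but they do not restrict it to a middle order on $\Ss_6$. A computer check on $\Ss_6$ therefore proves nothing for general $n$.

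The missing argument is short and uses both of your squares.
\begin{itemize}
\item Covers of a middle order are Bruhat covers, since its rank function is the length. So by Proposition~\ref{pr:rhombus}, $p\vee r_1$ and $p\vee r_2$ are common Bruhat upper covers.
\item Each $r_k$ differs from $p$ by a $3$-cycle of positions. The only candidates are therefore $t$, $q_k$ (too short) and $z_k$, giving $p\vee r_1\in\{t,z_1\}$ and $p\vee r_2\in\{t,z_3\}$, where $z_1=\word_1\,a\,b\,\word_2\,j\plusone\,i\minusone\,\word_3$ and $z_3=\word_1\,j\plusone\,i\minusone\,\word_2\,a\,b\,\word_3$.
\item If neither join is $t$, then $z_1$ and $z_3$ both cover $p$ in the middle order, so Proposition~\ref{pr:rhombus} demands a common Bruhat upper cover of $z_1$ and $z_3$.
\item But $z_1$ and $z_3$ differ by a double transposition of positions. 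The only permutations one transposition away from both are $p$ and $\word_1\,j\plusone\,b\,\word_2\,a\,i\minusone\,\word_3$. The latter does not cover $z_1$, because $b$ sits between $a$ and $j\plusone$ both in position and in value.
\end{itemize}
Hence $p\lessdot t$ is in $E$, and Proposition~\ref{prop:1} concludes.

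This completion differs from the paper's proof, which uses only your first square. The paper identifies the competing edge $r_1\lessdot z_1$ as lying in $\overline{(i\minusone,i\minusone,b\minusone,a)}$. It then rules this class out at the level of classes: by Propositions~\ref{pr:implications} and~\ref{pr:exclusion} it is incompatible with $\overline{(a,i,n,b)}$, which is implied by $\overline{(a,i,j\plusone,b)}$. Your two-square version stays purely local and needs neither of those propositions.
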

\begin{proof}
	Let $\word_2$ containing the value $i-2$ if $i>2$ and $j+2$ if $j<n-1$. A permutation of the form $\word_1\, i\minusone \, a \, \word_2\, j\plusone\, b \, \word_3$ is covered by $\word_1\, i\minusone \, b \, \word_2\, j\plusone\, a \, \word_3$ and $\word_1\, a \, i\minusone \, \word_2\, j\plusone\, b \, \word_3$, whose common upper covers are $\word_1\, a \, b \, \word_2\, j\plusone\, i\minusone \, \word_3$ and $\word_1\, b \, i\minusone \, \word_2\, j\plusone\, a \, \word_3$. This shows that $\overline {(a,i\minusone,j,b)}$ implies either $\overline {(a,i,j,b)}$ or $\overline {(i\minusone,i\minusone,b\minusone,a)}$. By Proposition \ref{pr:implications}, $\overline {(i\minusone,i\minusone,b\minusone,a)}$ implies $ \overline {(i\minusone,1,b\minusone,a)}$, and $\overline {(a,i,j\plusone,b)}$ implies $\overline {(a,i,n,b)}$. By Proposition \ref{pr:exclusion}, $\overline {(i\minusone,1,b\minusone,a)}$ and $ \overline {(a,i,n,b)}$ cannot be both edges of $E$, hence $\overline {(a,i,j,b)}\subset E$.
\end{proof}
Since by Proposition \ref{pr:implications} we also have $$ \left[\overline {(a,i,j,b)}\subset E\right]\implies \left[\overline {(a,i,j\plusone,b)}\subset E\right]\wedge \left[\overline {(a,i\minusone,j,b)}\subset E\right],$$
the edges of a middle order that are not left or right edges are entirely determined by left and right edges. By Proposition \ref{pr:exclusion}, the set of left edges of a middle order is determined by its right edges (and reciprocally), hence we can focus on left edges only.

\begin{prop}
\label{pr:diamond}
	Let $E$ be the set of edges of a middle order. For all $1\le i<j<k<\ell\le n$, we have
	\begin{align*}
		&\left[\overline{(i,1,k\minusone,j)}\subset E\right]\wedge \left[\overline{(j,1,\ell\minusone,k )}\not\subset E\right]\\
		\iff &  \left[\overline{(i,1,\ell\minusone ,j)}\subset E\right]\wedge \left[\overline{(i,1,\ell \minusone,k)}\not\subset E\right]
	\end{align*}
%		\begin{align*}
%			 &\left[\overline{(i,1,k\minusone,j)}\subset E\right]\wedge \left[\overline{(k,j\plusone,n,\ell ) }\subset E\right]\\
%			\iff &  \left[\overline{(i,1,\ell \minusone ,j)}\subset E\right]\wedge \left[\overline{(k,i\plusone,n,\ell )}\subset E\right]
%		\end{align*}
\end{prop}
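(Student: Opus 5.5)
The forward implication should follow from Proposition~\ref{pr:implications} alone. The reverse implication needs a genuinely local argument, which I would try to get from Propositions~\ref{pr:rhombus} and~\ref{pr:exclusion}. Throughout, write $A=\overline{(i,1,k\minusone,j)}$, $B=\overline{(j,1,\ell\minusone,k)}$, $C=\overline{(i,1,\ell\minusone,j)}$ and $D=\overline{(i,1,\ell\minusone,k)}$.

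\emph{Forward direction.} Apply Proposition~\ref{pr:implications}, which says $\overline{(a_1,i_1,j_1,b_1)}$ implies $\overline{(a_2,i_2,j_2,b_2)}$ whenever $a_1\le a_2$, $i_1\ge i_2$, $j_1\le j_2$ and $b_1\ge b_2$.
\begin{itemize}
\item $A$ implies $C$, since $k-1\le \ell-1$ and all other parameters agree.
\item $D$ implies $C$, since $k\ge j$.
\item $D$ implies $B$, since $i\le j$, the right endpoints are both $\ell-1$, and $k\ge k$.
\end{itemize}
Hence $A\subset E$ gives $C\subset E$, and $B\not\subset E$ gives $D\not\subset E$ by contraposition. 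This proves ``$\Rightarrow$''. The same list also shows that $C\wedge\neg D$ is consistent with these implications, so the converse needs new information.

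\emph{Reverse direction.} Assume $C\subset E$ and $D\not\subset E$; we must show $A\subset E$ and $B\not\subset E$. First I would translate everything into right edges with Proposition~\ref{pr:exclusion}:
\begin{itemize}
\item $C\subset E$ iff $\overline{(j,i\plusone,n,\ell)}\not\subset E$;
\item $D\subset E$ iff $\overline{(k,i\plusone,n,\ell)}\not\subset E$;
\item $A\subset E$ iff $\overline{(j,i\plusone,n,k)}\not\subset E$;
\item $B\subset E$ iff $\overline{(k,j\plusone,n,\ell)}\not\subset E$.
\end{itemize}
So the hypotheses say $D' := \overline{(k,i\plusone,n,\ell)}\subset E$ and $C' := \overline{(j,i\plusone,n,\ell)}\not\subset E$. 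The goal is $A' := \overline{(j,i\plusone,n,k)}\not\subset E$ and $B' := \overline{(k,j\plusone,n,\ell)}\subset E$.

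The second goal is easy: $D'$ implies $B'$ by Proposition~\ref{pr:implications}, since $i+1\le j+1$.

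For $A'\not\subset E$, the plan is to build a permutation $x$ whose pattern on $i<j<k<\ell$ has the following property. Two Bruhat upper covers of $x$ (or two lower covers) are realised by representatives of the classes $C'$, $D'$, $A'$. These covers have exactly one common upper cover, as in the proofs of Lemma~\ref{lem:1} and Proposition~\ref{pr:implications}, or else two covers, as in Proposition~\ref{pr:exclusion}. Proposition~\ref{pr:rhombus} should then give a relation of the form ``$D'$ and $A'$ together imply $C'$'', which contradicts $C'\not\subset E$.

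To choose representatives, I would use Proposition~\ref{prop:1}: only the class of an edge matters. So I can place the auxiliary values $i-1$, $i$ (or $i+1$) and $n$ wherever needed to pin each class down. A natural first candidate is a permutation containing the factor $\ell\,j\,k$ preceded by $i$. In it, the transposition $(k,\ell)$ with $i$ between the transposed values lies in $D'$, and $(j,k)$ lies in $A'$. I would then compare with the configuration where $j$ and $\ell$ are exchanged around $k$, and check that the common covers are forced.

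The main obstacle is finding this configuration and verifying that the common covers are exactly the claimed ones. If hand search fails, I would fall back on the method of Proposition~\ref{prop:1}. All classes involved are determined by the relative positions of at most $i-1,i,j,k,\ell,n$ together with the endpoints $1$ and $n$, so I expect the statement reduces to $n\le 6$ after standardisation. The implication could then be checked in CNF form with the SAT solver, on the already known list of middle orders of small size.
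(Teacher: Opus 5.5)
Your forward direction is correct and shorter than the paper's route: $A\Rightarrow C$ and $D\Rightarrow B$ are both instances of Proposition~\ref{pr:implications}, so that half needs no new configuration. The reverse direction, however, has a genuine gap.

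\textbf{The step you call easy is false.} ``$D'$ implies $B'$'' misapplies Proposition~\ref{pr:implications}. With $(a_1,i_1,j_1,b_1)=(k,i\plusone,n,\ell)$ and $(a_2,i_2,j_2,b_2)=(k,j\plusone,n,\ell)$, the proposition needs $i_1\ge i_2$, i.e.\ $i+1\ge j+1$, which fails. What it actually gives is $B'\Rightarrow D'$. Through Proposition~\ref{pr:exclusion}, that is just the contrapositive of the implication $D\Rightarrow B$ you used in the forward direction.

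The implication $D'\Rightarrow B'$ fails in general. In $\Ss_4$, let $\Tree$ be the binary search tree of $132$; then $\RPos_{\Tree,3}=\{(2,4),(3,4)\}$. By Proposition~\ref{pr:covers}, the edges of $\overline{(3,2,4,4)}$ are covers of $\le_\Tree$, while those of $\overline{(3,3,4,4)}$ are not. So $B\not\subset E$ cannot be deduced from $D\not\subset E$ alone; it needs both hypotheses together.

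\textbf{The other half is not carried out.} For $A\subset E$ you only describe a search. The candidate you give does not realize the classes you name. In $i\,\ell\,j\,k$, the value between $\ell$ and $k$ is $j$, so that transposition lies in $B'$, not $D'$. Also $j,k$ are adjacent there, so swapping them is a weak-order edge, not an edge of $A'$. The SAT fallback is not a proof either: nothing justifies reducing this four-parameter statement to $n\le 6$, and the paper's computation only concerns $\Ss_5$.

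\textbf{The missing idea} is a single diamond that gives both conclusions at once. Let $x=\word_1\,i\,k\,j\,\ell\,\word_2$.
\begin{itemize}
\item Its Bruhat upper cover $\word_1\,j\,k\,i\,\ell\,\word_2$ is reached by an edge of $A$, since only $k$ lies between $i$ and $j$.
\item Its Bruhat upper cover $\word_1\,i\,\ell\,j\,k\,\word_2$ is reached by an edge of $B'$, since only $j$ lies between $k$ and $\ell$.
\item These two covers have the unique common upper cover $\word_1\,j\,\ell\,i\,k\,\word_2$, reached by edges of $C$ and of $D'$.
\item $x$ is their unique common lower cover.
\end{itemize}
Covers of a middle order are Bruhat covers, since it is graded by length. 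Hence Proposition~\ref{pr:rhombus} and its dual give $[A\subset E]\wedge[B'\subset E]\iff[C\subset E]\wedge[D'\subset E]$. Proposition~\ref{pr:exclusion} rewrites $B'\subset E$ and $D'\subset E$ as $B\not\subset E$ and $D\not\subset E$, which is exactly the statement in both directions. The relation you were hunting for, that $D'$ and $A'$ together force $C'$, is the contrapositive of half of this equivalence.
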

\begin{proof}
  A permutation of the form $\word_1 \, i \, k \, j \, \ell \, \word_2$ is covered by $\word_1 \, i \, \ell \, j \, k \, \word_2$ and $\word_1 \, j \, k \, i \, \ell \, \word_2$, whose only common cover is $\word_1 \, j \, \ell \, i \, k \, \word_2$. This shows that if $\overline{(i,1,k\minusone,j)}\subset E$ and $\overline{(k,j\plusone,n,\ell ) }\subset E$, then $\overline{(i,1,\ell\minusone ,j)}\subset E$ and $\overline{(k,i  \plusone ,n,\ell )}\subset E$. The converse is also true by the same argument. By Proposition \ref{pr:exclusion}, this property can be then expressed on left edges only.
\end{proof}

\subsection{Sets of edges and Gelfand-Tsetlin triangles}
\label{sec:GT}
A \defn{Gelfand-Tsetlin pattern} of size $n$ is an array $(X_{i,j})_{1\le j\le i\le n}$ such that for all $1\le j\le i \le n-1$, $X_{i+1,j}\le X_{i,j}\le X_{i+1,j+1}$. Let $\GT_n$ be the lattice of Gelfand-Tsetlin triangles with first line $1,2,...,n$ ordered by coordinatewise comparison. $\GT_n$ is a distributive lattice of size $2^{\binom n 2}$~\cite[Proposition 2.1]{CLP02}.
\begin{prop}
	The lattice of sets of left edges satisfying the equivalence and implication relations ordered by inclusion is isomorphic to $\GT_{n-1}$.
	\label{pr:3}
\end{prop}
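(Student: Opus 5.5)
The plan is to encode a set of left edges by integer thresholds and to check that the equivalence, implication and diamond relations become exactly the Gelfand–Tsetlin inequalities. By Proposition~\ref{prop:1}, a set $E_L$ of left edges satisfying the equivalence relations is a union of classes $\overline{(a,1,j,b)}$ with $1\le a<b\le j\le n-1$; the case $j=n$ is excluded because those edges are weak order edges. So $E_L$ is the same thing as a subset $S$ of the index set $\{(a,j,b)\}$.

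By Proposition~\ref{pr:implications} applied to left edges, $S$ is closed under three moves:
\begin{itemize}
\item $(a,j,b)\mapsto(a+1,j,b)$ when $a+1<b$;
\item $(a,j,b)\mapsto(a,j,b-1)$ when $a<b-1$;
\item $(a,j,b)\mapsto(a,j+1,b)$ when $j<n-1$.
\end{itemize}
By the first move, for fixed $(j,b)$ the set $\{a:(a,j,b)\in S\}$ is a terminal segment $\{x_{j,b},\dots,b-1\}$ of $\{1,\dots,b-1\}$. Here $x_{j,b}\in\{1,\dots,b\}$, and $x_{j,b}=b$ means the segment is empty. The second and third moves then say that $x_{j,b}$ is weakly decreasing in $j$ and weakly increasing in $b$ (after the natural shift, so that $b=x_{j,b}$ means ``empty'').

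I would then place these thresholds in a triangular array. Rows are indexed by $j$, positions within a row by $b$, and a constant row $1,2,\dots,n-1$ is adjoined to play the role of the fixed first line of $\GT_{n-1}$ (this is the boundary value $x_{j,b}=b$, the empty configuration). One checks that this yields an array of the right shape, with $\binom{n-1}{2}$ free entries, each ranging over an interval. Inclusion of sets $S$ corresponds to reverse coordinatewise comparison of the thresholds. Composing with the order-reversing involution $X_{i,j}\mapsto n-X_{i,n-j}$ of $\GT_{n-1}$ if necessary, this becomes the coordinatewise order.

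The key step is to show that the remaining constraint, Proposition~\ref{pr:diamond}, is equivalent to the interlacing inequalities $X_{i+1,j}\le X_{i,j}\le X_{i+1,j+1}$. The monotonicity obtained from Proposition~\ref{pr:implications} gives only the weaker ``rows and columns monotone'' conditions.

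\begin{itemize}
\item \emph{Interlacing implies the diamond relation.} Given thresholds satisfying interlacing, I would verify the equivalence of Proposition~\ref{pr:diamond} directly. For fixed $i<j<k<\ell$, each of the four memberships is a comparison of $i$ or $j$ with a threshold in rows $k-1$ or $\ell-1$. Reducing to $\ell=k+1$ by chaining, the statement becomes a relation between adjacent rows.
\item \emph{The diamond relation implies interlacing.} I would choose $i,j,k,\ell$ at the extremal positions (for instance $i=x_{k-1,j}$ or $i=x_{k-1,j}-1$) so that a violation of an inequality produces a quadruple where one side of the diamond equivalence holds and the other fails.
\end{itemize}

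This translation is the step I expect to be the main obstacle: matching the exact indices, and deciding whether rows of the pattern correspond to $j$ or to $b$, will need care. I would first test the dictionary on $n=4,5$ against Figure~\ref{fig:implications5}.

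Once the dictionary is established, the map $S\mapsto(x_{j,b})$ is injective, because $S$ is recovered from its thresholds. It is surjective, because every Gelfand–Tsetlin array defines terminal segments whose union is closed under all three moves and satisfies Proposition~\ref{pr:diamond}. It is order-preserving in both directions. Hence it is a lattice isomorphism with $\GT_{n-1}$, and as a consistency check the count $2^{\binom{n-1}{2}}$ from~\cite{CLP02} agrees with the Catalan-type enumeration expected later.
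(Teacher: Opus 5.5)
There is a genuine gap. The step you single out as the key one is false, because you have added a hypothesis the statement does not contain. The lattice in question consists of sets of left edges satisfying only the equivalence relations (Proposition~\ref{prop:1}) and the implication relations (Proposition~\ref{pr:implications}). The diamond relation of Proposition~\ref{pr:diamond} is not imposed. Imposing it is precisely what cuts $\GT_{n-1}$ down to the Catalan-many patterns of Proposition~\ref{pr:5}.

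Already for $n=4$ this breaks your plan. The four left classes are $A=\overline{(1,1,2,2)}$, $B=\overline{(1,1,3,2)}$, $C=\overline{(1,1,3,3)}$, $D=\overline{(2,1,3,3)}$. The implications are $A\Rightarrow B$, $C\Rightarrow B$, $C\Rightarrow D$, giving $8=|\GT_3|$ closed sets. The diamond relation (here only $(i,j,k,\ell)=(1,2,3,4)$, i.e.\ $A\wedge\neg D\iff B\wedge\neg C$) keeps only $5$ of them. For instance, $E=B$ (a single class) is closed and corresponds to the Gelfand--Tsetlin pattern with rows $2$; $1,2$; $1,2,3$, yet it violates the diamond relation. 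So your direction ``interlacing implies the diamond relation'' fails. Your own closing consistency check ($2^{\binom{n-1}{2}}$ against a Catalan count, i.e.\ $8$ against $5$) is itself the symptom.

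You believed Proposition~\ref{pr:implications} gives something weaker than interlacing because you chose to threshold along $a$. Interlacing is just row/column monotonicity of a staircase array read in diagonal coordinates, so what matters is which variable you threshold. Threshold along $b$ instead. For each pair $1\le a<j\le n-1$, the move $b\mapsto b-1$ makes $\{b:\overline{(a,1,j,b)}\subset E\}$ an initial segment $\{a+1,\dots,T(a,j)\}$, with $T(a,j)=a$ if it is empty. Put $X_{i,a}:=T(a,n-i+a-1)$ for $1\le a\le i\le n-1$, with $T(a,a)=a$. Then:
\begin{itemize}
\item the top row is forced to be $X_{n-1,a}=a$;
\item the move $j\mapsto j+1$ becomes $X_{i+1,a}\le X_{i,a}$;
\item the move $a\mapsto a+1$ becomes $X_{i,a}\le X_{i+1,a+1}$ (automatic when $T(a,j)\le a+1$).
\end{itemize}
These are exactly the Gelfand--Tsetlin inequalities, inclusion becomes coordinatewise comparison, and injectivity and surjectivity are immediate. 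This is the paper's proof, and it never uses Proposition~\ref{pr:diamond}.

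Your $a$-threshold array, by contrast, is not entrywise a Gelfand--Tsetlin pattern. For $n=4$ its three-valued entry $x_{3,3}$ is bounded only from below (by $x_{3,2}$), whereas in $\GT_3$ the three-valued entry is squeezed between two two-valued ones. So your claim that ``one checks this yields an array of the right shape'' would not go through without switching encodings.
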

\begin{proof}
    % TO DO dire quelles inégalités correspond à quoi
	Let $E$ be a set of left edges satisfying the equivalence and implication relations. For all $1\le j\le i\le n-1$, let $X_{i,j}=k$ where $k$ is the largest value such that $\overline{ (j,1,n \! - \! i\! + \! j \! - \! 1,k)}\subset E$. Using Proposition \ref{pr:implications}, it is straightforward to check that $(X_{i,j})_{1\le j\le i\le n-1}$ is in $\GT_{n-1}$. Conversely, if $(X_{i,j})_{1\le j\le i\le n-1}\in \GT_{n-1}$, we can construct a set of left edges satisfying the equivalence and implication relations by letting $\overline{ (j,1,n \! -\! i \!+ \! j \! -\! 1,k)}\subset E$ for all $1\le j\le i\le n$ and $j+1\le k\le X_{i,j}$. Sets of left edges contain each other if and only if the corresponding Gelfand-Tsetlin triangles are comparable coordinatewise, as $X_{i+1,j}\le X_{i,j}$ if and only if $$\left[\overline {(j,1,n-i+j-2,k)}\subset E\right]\implies \left[\overline {(j,1,n-i+j-1,k)}\subset E\right],$$
	and $X_{i,j}\le X_{i+1,j+1}$ if and only if $$\left[\overline {(j,1,n-i+j-1,k)}\subset E\right]\implies \left[\overline {(j+1,1,n-i+j-1,k)}\subset E\right].$$
\end{proof}

By Birkhoff's representation theorem (see~\cref{sec:distri}), Proposition \ref{pr:3} is equivalent to the fact that the poset of equivalence classes of left edges of $\Ss_n$ ordered by implication relations is isomorphic to the poset of join-irreducible elements of $\GT_{n-1}$.

\begin{figure}[H]
	\centering
	\includegraphics[width=0.7\linewidth]{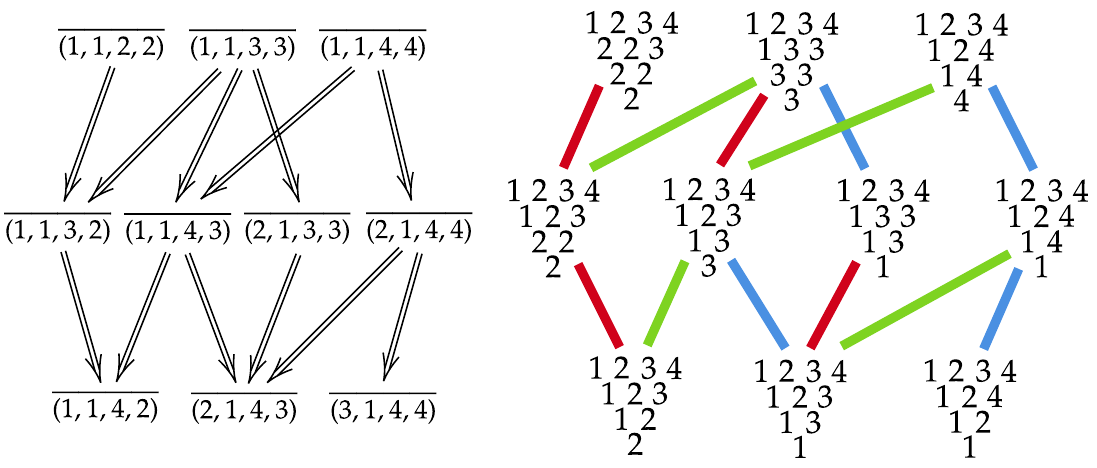}
	\caption{The poset of equivalence classes of left edges in $\Ss_5$ ordered by implications and the poset of irreducible elements of $\GT_4$.}
	\label{fig:gtposet0}
\end{figure}

\begin{prop}
% TO DO mettre des refs vers les propositions
	A set of left edges of $\Ss_n$ satisfying equivalence relations (\emph{cf.} Proposition \ref{prop:1}) and implication relations (\emph{cf.} Proposition \ref{pr:implications}) satisfies the equation of Proposition \ref{pr:diamond}
		\begin{align*}
			&\left[\overline{(i,1,k\minusone,j)}\subset E\right]\wedge \left[\overline{(j,1,\ell\minusone,k )}\not\subset E\right]\\
			\iff &  \left[\overline{(i,1,\ell\minusone ,j)}\subset E\right]\wedge \left[\overline{(i,1,\ell \minusone,k)}\not\subset E\right]
		\end{align*}
	 if and only if the corresponding Gelfand-Tsetlin triangle $(X_{i,j})_{1\le i\le j\le n-1}$ satisfies
	 \begin{equation*}
	 	X_{i,j}=k\implies X_{k+i-j,k}=X_{j+n-k,j}=k,
	 	\label{eq:4}
	 \end{equation*}
	 These Gelfand-Tsetlin triangles are in bijection with binary trees with $n$ leaves (see Figure \ref{fig:gtposet2}).
	\label{pr:5}
\end{prop}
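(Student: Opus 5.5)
The plan is to translate everything through the dictionary of Proposition~\ref{pr:3}. There, $X_{i,j}$ is the largest $k$ with $\overline{(j,1,n-i+j-1,k)}\subset E$. Equivalently, for fixed $a$ and fixed right bound $m$, the class $\overline{(a,1,m,b)}$ lies in $E$ if and only if $b\le X_{i,a}$, where $i$ is determined by $m=n-i+a-1$. Implications (Proposition~\ref{pr:implications}) make the set of admissible $b$ an initial segment, so this threshold description is exact. Each of the four bracketed conditions in the diamond equation then becomes an inequality between a GT entry and an index:
\[
\overline{(i,1,k-1,j)}\subset E \iff j\le X_{\cdot,i}, \qquad \overline{(j,1,\ell-1,k)}\not\subset E \iff X_{\cdot,j}<k,
\]
and similarly for the two conditions on the right-hand side. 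Here the first index is computed from the right bound. After relabelling, the diamond equation becomes a statement about two entries in each of two diagonals of the triangle, indexed by the quadruple $i<j<k<\ell$.

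Next I rewrite the GT-side condition $X_{i,j}=k\Rightarrow X_{k+i-j,k}=X_{j+n-k,j}=k$ in the same threshold language. Using monotonicity along rows and diagonals (the interlacing inequalities), $X_{p,q}=k$ means $X_{p,q}\ge k$ and $X_{p,q}<k+1$. The diamond equation is a family of biconditionals indexed by $(i,j,k,\ell)$.

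\emph{Forward direction.} Assume the diamond equation and $X_{p,q}=k$. I choose the quadruple in which $\ell-1$ or $k-1$ equals the right bound attached to $(p,q)$. The left side then holds at the appropriate index, so the right side forces the membership/non-membership that pins down $X_{k+i-j,k}\ge k$ and $\le k$. The other equality comes from the mirror choice. The interlacing inequalities supply the reverse bounds, so each step only needs one direction.

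\emph{Converse.} Assume the GT condition. I prove the biconditional by a case split on the value of the relevant entry $X$ compared with $j$ and $k$. In each case either both sides fail, or both sides hold because an equality $X=k$ propagates via the GT condition to the entry appearing on the other side.

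The main obstacle is the index bookkeeping: matching the parameters $(i,j,k,\ell)$ of the diamond equation to the correct GT coordinates, and checking the boundary cases ($k=j+1$, $\ell=n$) where some entries sit on the edge of the triangle. I would first verify the correspondence on $\Ss_5$ against Figure~\ref{fig:gtposet0} to fix conventions.

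For the bijection with binary trees, I read the GT condition recursively. The top entry $X_{1,1}$ takes a value $k$, and the condition then forces a whole hook of the triangle to equal $k$. That hook splits the remaining entries into two independent smaller triangles of sizes $k-1$ and $n-1-k$, each again satisfying the condition. This is the root $k$ with left and right subtrees. Hence the triangles satisfy the Catalan recursion, and the map to binary search trees is a bijection.
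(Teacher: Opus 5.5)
Your route is the paper's. You push each bracket through the dictionary of Proposition~\ref{pr:3}; the paper's display $j\le X_{i-\ell+n,i}<k \iff j\le X_{i-k+n,i}$ and $X_{j-\ell+n,j}<k$ is correct. You then compare with the Gelfand--Tsetlin condition and split the triangle; your root-splitting recursion is a more explicit form of the paper's parallelogram-regions bijection.

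The step that fails is exactly the bookkeeping you postponed. Write $Y(a,m)$ for the largest $b$ with $\overline{(a,1,m,b)}\subset E$ (and $Y(a,m)=a$ if there is none), so that $X_{p,q}=Y(q,n-p+q-1)$ in the indexing $1\le q\le p\le n-1$ of Proposition~\ref{pr:3}. The entry $X_{j+n-k,j}$ in the statement has right bound $n-(j+n-k)+j-1=k-1$. It therefore lies in $[j,k-1]$ and can never equal $k$, so no choice of quadruple, mirror or not, can pin it down. Taken literally, the condition is violated by every entry with $X_{i,j}=k>j$. That leaves only the minimal triangle, not $C_{n-1}$ of them, which is already false for $n=4$. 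The entry the diamond actually controls is $X_{j+n-k-1,j}=Y(j,k)$. The shift arises because the diamond's $k$ is the value plus one, so the statement has an off-by-one that your planned check on $\Ss_5$ will expose.

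With that correction your plan closes, and more simply than you describe. By the interlacing inequalities, if the printed left side of the diamond holds then $j\le Y(i,k-1)\le Y(i,\ell-1)\le Y(j,\ell-1)<k$, which is the printed right side. So only ``right $\Rightarrow$ left'' has content, and your case split in the converse reduces to this.

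\emph{Forward step.} Take an entry $X_{p,q}=b$, and put $a=q$ and $m=n-p+q-1$, with $a<b<m$. Use the single instance $(i,j,k,\ell)=(a,b,b+1,m+1)$. The hypothesis makes its printed right side true (not the left, as you wrote). The two conjuncts of the left side then give $Y(a,b)\ge b$ and $Y(b,m)\le b$, hence $Y(a,b)=Y(b,m)=b$, that is, $X_{b+p-q,b}=X_{q+n-b-1,q}=b$. Both equalities come from the same instance, so no mirror choice is needed; in the left-edge-only formulation there is no evident mirror anyway. For $b\in\{a,m\}$ the corrected condition holds automatically.

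\emph{Converse.} If $j\le b:=Y(i,\ell-1)<k$, then $Y(i,k-1)\ge Y(i,b)=b\ge j$ and $Y(j,\ell-1)\le Y(b,\ell-1)=b<k$.

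\emph{Recursion.} The region forced to equal $k=X_{1,1}$ is the whole parallelogram $\{(a,m):a\le k\le m\}$, sandwiched between $Y(1,k)=k$ and $Y(k,n-1)=k$, not just a hook. For $k\in\{1,n-1\}$ it is forced by interlacing alone, since the condition is then vacuous. Interlacing across the parallelogram's boundary is automatic, which is what makes the two remaining triangles independent and yields the Catalan recursion.
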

\begin{proof}
   By Proposition \ref{pr:3}, the first equation translates into $$j\le X_{i-\ell+n,i}<k \iff j\le X_{i-k+n,i} \ \text{ and  } \ X_{j-\ell+n,j}<k,$$
   which is equivalent to the second equation. This means that for all $1\le k\le n-1$, the entries such that $X_{i,j}=k$ form a parallelogram, with two opposite vertices given the entry on the top row $X_{n-1,k}=k$ and the bottommost equal to $k$. These triangles are in bijection with binary trees, as a binary tree can be obtained from $(X_{i,j})_{1\le i\le j\le n-1}$ by drawing edges separating adjacent entries with distinct values.
   % TODO éclaircir parallélogramme (une valeur sur la ligne du haut ...) + bijection
\end{proof}

\begin{rmk}
   The order induced on binary trees by the coordinatewise comparison of the corresponding Gelfand-Tsetlin triangles is the Tamari lattice. This corresponds to middle orders ordered by the inclusion of left edges.
\end{rmk}

\begin{figure}[H]
	\centering
	\includegraphics[width=0.7\linewidth]{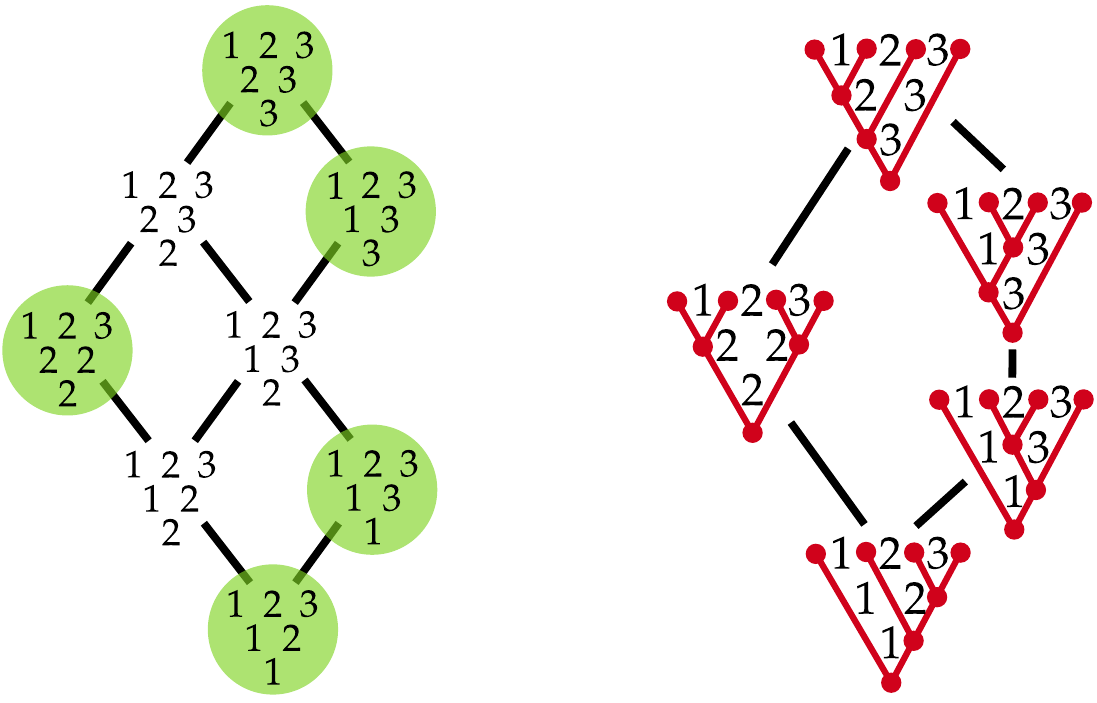}
	\caption{\centering The lattice $\GT_{n-1}$ with $n=4$ with its elements satisfying Equation \eqref{eq:4} being highlighted, and the corresponding binary trees.}
	\label{fig:gtposet2}
\end{figure}

\begin{thm}
    Every distributive lattice between the weak and Bruhat orders on $\Ss_n$ is equal to $(\Ss_n,\le_\Tree)$ for some $\Tree \in \Trees_n$.
\end{thm}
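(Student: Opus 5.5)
Let $\L$ be a distributive lattice on $\Ss_n$ that refines the right weak order and is refined by the Bruhat order, and let $E$ be its set of cover relations. Since $\L$ refines the weak order and is coarsened by the Bruhat order, and all three posets share the same rank function $\ell$, every cover relation of $\L$ is a Bruhat cover. Every weak-order cover is also a cover of $\L$. So $E$ consists of all weak-order edges together with some set of strict Bruhat edges, and $\L$ is determined by $E$ as its transitive closure. The plan is to show that $E$ coincides with the edge set of some $(\Ss_n,\le_\Tree)$.

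\textbf{Step 1: reduce to equivalence classes of left edges.} By Proposition~\ref{prop:1}, $E$ is a union of classes $\overline{(a,i,j,b)}$. By Proposition~\ref{pr:exclusion}, the right edges in $E$ are determined by the left edges. By Proposition~\ref{pr:leftright} together with Proposition~\ref{pr:implications}, a class that is neither left nor right lies in $E$ exactly when $\overline{(a,i,n,b)}$ and $\overline{(a,1,j,b)}$ lie in $E$; this should follow by iterating the two implications until one reaches a left and a right edge. So $E$ is determined by its set of left edges. That set satisfies the equivalence and implication relations, so Proposition~\ref{pr:3} gives a Gelfand--Tsetlin triangle $X\in\GT_{n-1}$. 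Proposition~\ref{pr:diamond} holds in $E$, so Proposition~\ref{pr:5} applies: $X$ satisfies the parallelogram condition and corresponds to a binary tree $\Tree$.

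\textbf{Step 2: compare with $\le_\Tree$.} For this $\Tree$, the order $(\Ss_n,\le_\Tree)$ is itself a middle order, by Theorem~\ref{pr:bij} and the proposition that middle orders lie between the weak and Bruhat orders. Its edge set $E_\Tree$ therefore also satisfies all of the above relations, and so determines a triangle $X_\Tree$ that corresponds to some binary tree. Since $E$ and $E_\Tree$ are both determined by their left edges, it suffices to show $X_\Tree = X$.

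I would compute $X_\Tree$ directly from Proposition~\ref{pr:covers}. A left edge in $\overline{(a,1,j,b)}$ belongs to $E_\Tree$ iff no value of $[i_{\Tree,k},j_{\Tree,k}]$ lies strictly between $a$ and $b$. Here $(a,b)\in\RPos_{\Tree,k}$, and the values between $a$ and $b$ are exactly those outside $[1,j]$, with $j+1$ among them. This happens iff $j_{\Tree,k}\le j$. So $X_\Tree$ records, for each $(a,b)$, the right endpoint of the rectangle containing it. The main computation is to check that this agrees with the parallelogram-to-tree bijection of Proposition~\ref{pr:5}, where the entries equal to $k$ form the region attached to node $k$.

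\textbf{Main obstacle.} The delicate point is the bijection in Proposition~\ref{pr:5}, which is only sketched. I would avoid matching it explicitly. Instead, I would argue by counting: the map $\Tree\mapsto X_\Tree$ is injective, because distinct trees give distinct partitions $\RPos_\Tree$ and hence distinct left-edge sets. Its image lies in the set of triangles satisfying the condition of Proposition~\ref{pr:5}, and that set has $C_{n-1}$ elements. By pigeonhole, the map is therefore surjective onto valid triangles. Hence every valid $X$ equals $X_\Tree$ for some $\Tree$, which gives $E=E_\Tree$ and $\L=(\Ss_n,\le_\Tree)$.
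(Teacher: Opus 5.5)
Your proposal is correct and follows essentially the paper's argument. The necessary edge relations (Propositions~\ref{prop:1}, \ref{pr:exclusion}, \ref{pr:implications}, \ref{pr:leftright}, \ref{pr:diamond}) reduce a middle order to a Gelfand--Tsetlin triangle satisfying the condition of Proposition~\ref{pr:5}; there are only $C_{n-1}$ such triangles, and by counting the $C_{n-1}$ orders $\le_\Tree$ of Section~\ref{sec:construction} account for all of them. The only differences are that you apply the pigeonhole principle to $\Tree\mapsto X_\Tree$ rather than to the map from middle orders to triangles, and that you make explicit two points the paper leaves implicit: classes that are neither left nor right are forced by iterating Propositions~\ref{pr:implications} and~\ref{pr:leftright}, and distinct trees give distinct edge sets, which follows from Proposition~\ref{pr:covers}.
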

\begin{proof}
% TO DO être plus précis (quelles prop.)
 Throughout this section, we have described properties that sets of edges must satisfy in order to form a middle order (\emph{cf.} Propositions~\ref{prop:1}, \ref{pr:exclusion}, \ref{pr:implications}, \ref{pr:leftright}, and \ref{pr:diamond}), and shown that sets of edges satisfying these properties are in bijection with binary trees (\emph{cf.} \ref{pr:3}), so there is at most $C_{n-1}$ middle orders on $\Ss_n$. In Section \ref{sec:construction} we constructed $C_{n-1}$ middle orders on $\Ss_n$, hence we got all middle orders on $\Ss_n$.
\end{proof}

\section{Properties of middle orders}
\label{sec:properties}
\subsection{Middle orders up to isomorphism}
We have constructed $C_{n-1}$ middle orders on $\Ss_n$, although some of them are isomorphic. Distributive lattices are isomorphic if and only if their posets of irreducibles are isomorphic, so we study which trees $\Tree$ yield isomorphic posets $\RPos_{\Tree}$. 

\begin{prop}
	$\RPos_{\Tree_1}$ and $\RPos_{\Tree_2}$ are isomorphic if and only if $\Tree_1$ can be obtained from $\Tree_2$ by a sequence of exchanges of two subtrees which have the same size or are the left and right children of the same node.
	\label{pr:2}
\end{prop}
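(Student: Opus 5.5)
The plan is to read off the isomorphism type of $\RPos_\Tree$ directly from its connected components. By construction, $\RPos_\Tree$ is a disjoint union of rectangular posets $\RPos_{\Tree,k}$, one for each internal node $k$ of $\Tree$. The recursive description used in the proof of Theorem~\ref{pr:bij} identifies these rectangles: $\RPos_{\Tree,k}$ is the product of two chains of lengths $p_k$ and $q_k$, where $p_k$ and $q_k$ are the numbers of leaves of the left and right subtrees of $k$. Indeed $\RPos_{\Tree,k}=[i_{\Tree,k},k]\times[k+1,j_{\Tree,k}]$, and these intervals are exactly the leaf-ranges of the two subtrees.

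A finite poset is determined up to isomorphism by the multiset of isomorphism types of its connected components. Each component here is a connected product of two chains. Such a product $C_p\times C_q$ determines the unordered pair $\{p,q\}$: its cardinality is $pq$, and its height $p+q-1$ is the length of a longest chain. Hence $\RPos_{\Tree_1}\cong\RPos_{\Tree_2}$ if and only if the multisets $M(\Tree)=\{\{p_k,q_k\}\}_k$ of unordered pairs of subtree leaf-counts agree.

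For the ``if'' direction, we check that both moves preserve $M(\Tree)$.
\begin{itemize}
\item Exchanging the left and right children of a node $k$ swaps $p_k$ and $q_k$ and changes nothing else.
\item Exchanging two disjoint subtrees of the same size (neither contains the other) preserves the multiset of pairs inside each subtree. It also preserves the sizes of every subtree containing either of them, since same-size subtrees carry the same number of leaves.
\end{itemize}
After relabelling nodes in in-order, the result is again in $\Trees_{n-1}$, so $M$ is invariant under both moves.

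The ``only if'' direction is the main obstacle: two trees with the same multiset $M$ must be related by moves. I would argue by strong induction on the number of leaves, in three steps.
\begin{enumerate}
\item Using child swaps, normalize both trees so that at every node the left subtree has at least as many leaves as the right.
\item The root contributes the pair $\{p,q\}$ with $p+q=n$. It is the only pair with sum $n$, since all other nodes have fewer leaves below them. Hence the roots of $\Tree_1$ and $\Tree_2$ split the leaves identically.
\item It remains to show that the multiset $M$ restricted to the left subtree (size $p$) and to the right subtree (size $q$) can be matched up, possibly after exchanging same-size subtrees between the two sides. Then induction applies to each side.
\end{enumerate}

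The delicate point is step 3, since $M$ does not record which pair lives on which side. I would try first to show that the subtree structure is reconstructible greedily from the largest sums downward. A node with $m$ leaves has children of sizes $a$ and $m-a$, and each such child must appear as a node whose pair has sum $a$ (or be a leaf). This lets us build the ``size tree'' top-down, where at each stage the available pairs of a given sum are interchangeable. Whenever two different assignments of a pair of sum $s$ are possible, the corresponding subtrees both have size $s$. They can therefore be exchanged by a same-size subtree swap, possibly after first swapping the remaining parts recursively to make their internal structures coincide. Making this exchange argument precise is the key step: any two such top-down assignments differ by a sequence of swaps of equal-size subtrees. I expect this is where care is needed, for instance via a lexicographic normal form reached by both trees.
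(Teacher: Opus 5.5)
You have the right invariant and the same overall strategy as the paper, but the ``only if'' direction has a gap. The invariant $M(\Tree)$ is the multiset of unordered pairs $\{p_k,q_k\}$ of leaf counts, i.e.\ the rectangles of $\RPos_\Tree$; your reduction to it is correct, and so is the ``if'' direction. The paper likewise rebuilds $\RPos_{\Tree_1}$ from the rectangles of $\RPos_{\Tree_2}$, largest first, using same-size subtree exchanges and child swaps. The problem is that you never actually prove the ``only if'' direction. Step~3 is only stated as something ``it remains to show'', and your last paragraph says that making the exchange argument precise is the key step.

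The root-splitting induction also does not reduce the difficulty. Redistributing the pairs between the two sides of the root by exchanges already needs the general exchange argument. Your proviso about first making the internal structures of two same-size subtrees coincide is a red herring: an exchange carries each subtree with all its contents, so no prior matching is needed.

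The missing idea closes the argument directly, without induction:
\begin{itemize}
\item Say a node has sum $s$ if its subtree has $s$ leaves, and a child position is a slot of size $s$ if the subtree hanging there has $s$ leaves.
\item Since $M(\Tree_1)=M(\Tree_2)$, fix a pair-preserving bijection between the nodes of $\Tree_1$ and those of $\Tree_2$. It induces a size-preserving bijection between their slots; for pairs $\{a,a\}$, match the two slots arbitrarily.
\item Up to child swaps, each tree is then encoded by bijections $\beta_s$, for $2\le s<n$, from nodes of sum $s$ to slots of size $s$.
\item Distinct subtrees with $s$ leaves are disjoint. Permuting them among their slots changes $\beta_s$ by an arbitrary permutation and leaves every $\beta_{s'}$ with $s'\neq s$ unchanged. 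Nodes of larger sum do not move, and nodes of smaller sum keep their parents.
\item Write each permutation as a product of transpositions. Level by level, this turns $\beta^{(1)}_s$ into $\beta^{(2)}_s$ using only exchanges of same-size subtrees.
\item The resulting tree agrees with $\Tree_2$ up to the left/right orientation at each node, which child swaps fix.
\end{itemize}
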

\begin{proof}
	If $\Tree_1$ and $\Tree_2$ differ by the exchange of two subtrees of the same size or of the left and right children of a node, $\RPos_{\Tree_1}$ and $\RPos_{\Tree_2}$ are isomorphic. Conversely, if $\RPos_{\Tree_1}$ and $\RPos_{\Tree_2}$ are isomorphic, we reconstruct $\RPos_{\Tree_1}$ using the rectangular posets from $\RPos_{\Tree_2}$, from the largest to the smallest (with respect to the sum of lengths of the sides). Each rectangle needs to be moved and/or flipped (along with all rectangles in the corresponding subtree), which corresponds to exchanging subtrees of the same size, or exchanging left and right children of a node.
	% TO DO moved and/or flipped with the whole subtree
\end{proof}

In particular, if $\Tree_1$ and $\Tree_2$ are isomorphic binary trees (up to exchange of left and right subtrees for each node), the posets $\RPos_{\Tree_1}$ and $\RPos_{\Tree_2}$ are isomorphic. The converse is not true (see Figure \ref*{fig:isomo1}).

\begin{figure}[H]
	\centering
	\includegraphics[width=0.9\linewidth]{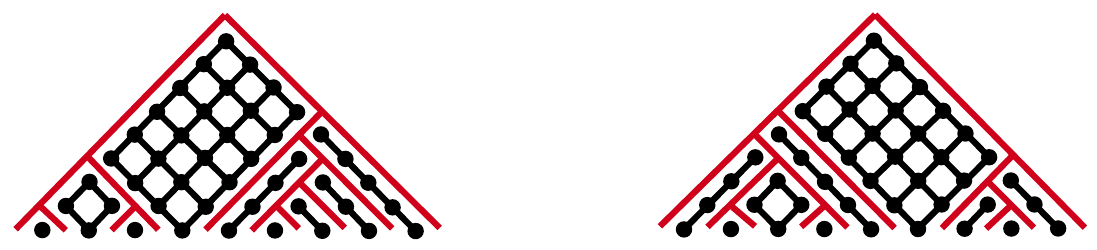}
	\caption{Isomorphic posets corresponding to non-isomorphic binary trees.}
	\label{fig:isomo1}
\end{figure}

The subtrees sizes of a binary tree $\Tree$ are the multiset containing the number of internal nodes of each subtree of $\Tree$, or equivalently $m+n-1$ for each rectangular poset of dimensions $m\times n$ in $\RPos_\Tree$. If $\RPos_{\Tree_1}$ and $\RPos_{\Tree_2}$ are isomorphic, then the binary trees $\Tree_1$ and $\Tree_2$ have the same subtree sizes. The converse is not true (see Figure \ref{fig:isomo2}).

% TO DO obtenu avec les $m+n-1$

\begin{figure}[H]
	\centering
	\includegraphics[width=0.9\linewidth]{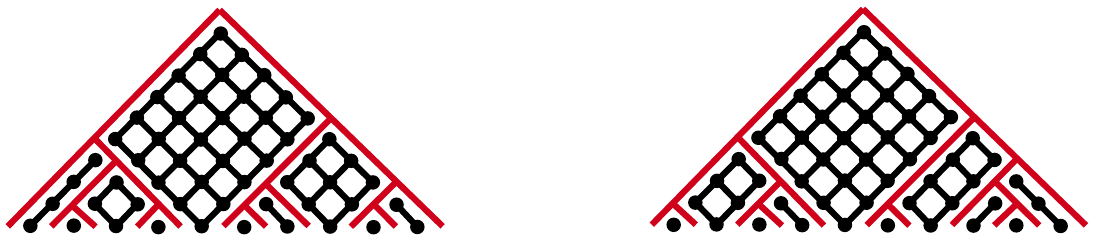}
	\caption{\centering Non-isomorphic posets with their subtree sizes both equal to $(10,5,4,3,2,2,1,1,1,1)$.}
	\label{fig:isomo2}
\end{figure}

These observations give us lower and upper bounds for the number of middle orders up to isomorphism~\cite[A247139]{oeis}, as it is smaller than the number of binary trees up to isomorphism~\cite[A001190]{oeis}, and larger than the number of binary trees up to their subtrees sizes~\cite[A382440]{oeis}. Similarly we can consider binary trees up to their ordered coordinates in the Loday polytopal realization of the Tamari lattice. These can be obtained as the multisets containing $mn$ for each rectangle of dimensions $m\times n$ (instead of $m+n-1$) in $\RPos_\Tree$. Enumerating distinct multisets of Loday coordinates yields a better lower bound for the number of middle orders up to isomorphism.

\begin{table}[H]
	\centering
	\begin{tabular}{cccccccccccccccc}
	\toprule
	   $n$ & $1$& $2$ &$3$ & $4$ & $5$ & $6$ & $7$ & $8$ & $9$ & $10$ & $11$ & $12$ & $13$ & $14$& $15$ \\ \midrule 
	   $A382440$ & $1$ & $1$ & $1$ & $2$ & $3$ & $6$ & $11$ & $23$ & $45$ & $95$ & $194$ & $414$ & $863$ & $1850$ & $3910$\\ \midrule 
	   $A392457$ & $1$ & $1$ & $1$ & $2$ & $3$ & $6$ & $11$ & $23$ & $45$ & $95$ & $195$ & $417$ & $865$ & $1877$ & $4001$\\ \midrule 
		$\mathbf{A247139}$& $1$ & $1$ & $1$ & $2$ & $3$ & $6$ & $11$ & $23$ & $45$ & $95$ & $195$ & $417$ & $875$ & $1887$ & $4021$\\ \midrule 
		$A001190$ & $1$ & $1$ & $1$ & $2$ & $3$ & $6$ & $11$ & $23$ & $46$ & $98$ & $207$ & $451$ & $983$ & $2179$ & $4850$\\ \bottomrule
	\end{tabular}
	\caption{\centering The number of middle orders on $\Ss_n$ up to isomorphism (A247139), compared to the number of binary trees up to isomorphism (A001190), the number of binary trees up to their subtrees sizes (A382440), and the number of Loday coordinates up to permutation (A392457), in increasing order of term values.}
\end{table}

\subsection{Chains and intervals}
\label{sec:chains}
We give formulas for the number of chains of a given length (and in particular intervals) in middle orders, and we show it depends only on the length $n$ of the permutations. We give combinatorial proofs of these formulas, which are a particular case of Proposition \ref{pr:Fpoly}.
A chain of a finite lattice $\L$ is said to be a \defn{complete chain} if it contains $\bigvee \L$ and $\bigwedge \L$. We take the convention that a chain $x_0,...,x_\ell$ has length $\ell$.
Complete chains of a finite distributive lattice $\IdL(\Pos)$ of length $\ell$ are in bijection with surjective increasing functions $f:\Pos\to \{1,...,\ell\}$.
% TODO trouver une ref

	Increasing functions from a rectangular poset of size $n\times m$ to $\{1,...,\ell\}$ are in bijection with plane partitions contained in a box of dimensions $m\times n\times (\ell-1)$, hence their number is equal to $\Pi(n,m,\ell-1)$, where~\cite{Macd95}
$$\Pi(n,m,\ell):=\prod_{i=1}^n\prod_{j=1}^m\prod_{k=1}^\ell\frac{i+j+k-1}{i+j+k-2}.$$
% TO DO citer MacMahon + <===> plane partitions.

\begin{prop}
   For all $\ell,n\ge 1$, all middle orders of size $n$ have the same number of complete chains of length $\ell$, which is equal to 
	$$\sum_{k=0}^{\ell}(-1)^{\ell-k}\binom \ell k\prod_{i=0}^{n-1}\binom {k+i-1}{i}.$$
	\label{pr:1}
\end{prop}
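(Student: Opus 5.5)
Our approach is to count increasing (not necessarily surjective) functions first and then remove the non-surjective ones by inclusion--exclusion. By the bijection recalled just above, complete chains of length $\ell$ in $(\Ss_n,\le_\Tree)\cong\IdL(\RPos_\Tree)$ correspond to surjective increasing functions $f:\RPos_\Tree\to\{1,\dots,\ell\}$. Let $g_\Tree(k)$ denote the number of increasing functions $\RPos_\Tree\to\{1,\dots,k\}$, not necessarily surjective. Classifying each such function by its image, which is some subset of size $m$ of $\{1,\dots,k\}$, and relabelling that image order-preservingly gives
\[
g_\Tree(k)=\sum_{m}\binom km s_\Tree(m),
\]
where $s_\Tree(m)$ is the number of surjective increasing functions onto $\{1,\dots,m\}$. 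Binomial inversion then yields
\[
s_\Tree(\ell)=\sum_{k=0}^{\ell}(-1)^{\ell-k}\binom \ell k g_\Tree(k).
\]
So the proposition reduces to proving, for every $\Tree\in\Trees_{n-1}$ and every $k\ge 0$,
\[
g_\Tree(k)=\prod_{i=0}^{n-1}\binom{k+i-1}{i}.
\]
This product is independent of $\Tree$, which gives the claimed independence.

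We prove this identity by induction on $n$, following the recursive decomposition used in the proof of Theorem~\ref{pr:bij}. With the conventions of that proof, write $\RPos_\Tree=\RPos_{\Tree_L}\sqcup\RPos_{\Tree_R}\sqcup\RPos_{\Tree,r}$. Here $\RPos_{\Tree,r}$ is a rectangle of size $r\times(n-r)$, while $\RPos_{\Tree_L}$ and $\RPos_{\Tree_R}$ correspond to permutations of sizes $r$ and $n-r$. Increasing functions on a disjoint union are just tuples of increasing functions on the components, so
\[
g_\Tree(k)=g_{\Tree_L}(k)\,g_{\Tree_R}(k)\,\Pi(r,n-r,k-1).
\]
Define $G_n(k):=\prod_{i=0}^{n-1}\binom{k+i-1}{i}$. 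By the induction hypothesis, it then suffices to establish the single identity
\[
\frac{G_n(k)}{G_r(k)\,G_{n-r}(k)}=\Pi(r,n-r,k-1)\qquad\text{for all }1\le r\le n-1 .
\]

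This identity is the main step. As a consistency check, for $k=2$ the right-hand side counts lower sets of the rectangle, namely $\binom nr$, and $G_n(2)=n!$, so the quotient is $n!/(r!(n-r)!)$, as expected. In general, the plan is to write both sides as ratios of superfactorials. MacMahon's formula gives
\[
\Pi(a,b,c)=\frac{H(a)H(b)H(c)H(a+b+c)}{H(a+b)H(a+c)H(b+c)},\qquad H(m):=\prod_{t=0}^{m-1}t!.
\]
On the other side, $\binom{k+i-1}{i}=\frac{(k+i-1)!}{i!\,(k-1)!}$, so
\[
G_n(k)=\frac{H(n+k-1)}{H(k-1)\,H(n)\,(k-1)!^{\,n}}.
\]
In the quotient $G_n/(G_rG_{n-r})$ the powers of $(k-1)!$ cancel, leaving
\[
\frac{H(n+k-1)\,H(k-1)\,H(r)\,H(n-r)}{H(n)\,H(r+k-1)\,H(n-r+k-1)}.
\]
This is exactly $\Pi(r,n-r,k-1)$ with $a=r$, $b=n-r$, $c=k-1$. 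The base case $n=1$ is immediate, since $\RPos_\Tree$ is empty, so $g_\Tree(k)=1=G_1(k)$.

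The only delicate points are bookkeeping. First, the sizes of the two subtrees must match the factors $G_r$ and $G_{n-r}$. Second, the degenerate values $k=0$ and $k=1$ need checking, where $\binom{i-1}{i}$ should be read as $0$ for $i\ge1$. For $k=0$ the empty poset case must be handled consistently: $G_n(0)=0$ for $n\ge2$, while $G_1(0)=1$. This matches $g_\Tree(0)$, which is $1$ exactly when $\RPos_\Tree$ is empty. Alternatively, one can simply restrict the sum to $k\ge1$, which changes nothing for $n\ge2$.
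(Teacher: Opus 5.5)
Your proof is correct and follows the same route as the paper: you induct over the decomposition $\RPos_\Tree=\RPos_{\Tree,r}\sqcup\RPos_{\Tree_L}\sqcup\RPos_{\Tree_R}$ to show that the number of increasing maps $\RPos_\Tree\to\{1,\dots,k\}$ is $\prod_{i=0}^{n-1}\binom{k+i-1}{i}$, and then use inclusion--exclusion to pass to surjective maps. You also verify the key product identity through MacMahon's superfactorial formula and treat the degenerate values $k=0,1$; the paper asserts that identity without computation and leaves those edge cases implicit.
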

\begin{proof}
	We prove by induction that for all binary trees $T$ with $n-1$ internal nodes, the number of increasing functions $f:\RPos_{\Tree}\to\{1,...,\ell\}$ is $$\displaystyle \prod_{i=0}^{n-1}\binom {\ell+i-1}{i}.$$

	If $n = 1$ the result is trivial. If $n>1$, the poset of irreducibles of a middle order of size $n$ can be written $\RPos_{\Tree}=\RPos_{\Tree,k}\sqcup \RPos_{\Tree_1}\sqcup \RPos_{\Tree_2}$ where $\RPos_{\Tree,k}$ is a rectangular poset of size $n_1\times n_2$, and $\Tree_1$ and $\Tree_2$ are the left and right children of the root of $\Tree$ and have respective sizes $n_1$ and $n_2$ with $n=n_1+n_2$. Hence, the number of increasing functions from $\RPos_{\Tree}$ to $\{1,...,\ell\}$ is
		$$\Pi(n_1,n_2,\ell-1)\cdot \prod_{i=0}^{n_1-1}\binom {\ell+i-1} i \cdot \prod_{i=0}^{n_2-1}\binom {\ell+i-1} i\\
		=\prod_{i=0}^{n-1}\binom {\ell+i-1} i.$$
		We then use the inclusion-exclusion principle to compute the number of such functions that are surjective.
	
\end{proof}

% TODO expliquer le calcul
In particular the number of intervals of any middle order on $\Ss_n$ is equal to $$\prod_{i=0}^{n-1}\binom{i+2}i=\frac {n!(n+1)!}{2^n},$$
as obtained in \cite[Corollary 3.2]{BFT24} for the original middle order.

\begin{defi}[Boolean chain]
	In a finite lattice $\L$, a (complete) \defn{boolean chain} of length $\ell$ is a complete chain $x_0<...<x_\ell\in \L$ such that for all $0\le i<\ell$, the interval $[x_i,x_i+1]$ is boolean.
\end{defi}

Boolean chains of a finite distributive lattice $\IdL(\Pos)$ of length $\ell$ are in bijection with surjective strictly increasing functions $f:\Pos\to \{1,...,\ell\}$.
% TODO ref

\begin{prop}
	For all $\ell\ge 1$, all middle orders of size $n$ have the same number of boolean chains of length $\ell$, which is equal to $$\sum_{k=0}^{\ell}(-1)^{\ell-k}\binom \ell k\prod_{i=0}^{n-1}\binom ki.$$
	\label{pr:bool}
\end{prop}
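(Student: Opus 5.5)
We follow the proof of Proposition~\ref{pr:1}, replacing increasing functions by strictly increasing ones. By the bijection recalled before the statement, boolean chains of length $\ell$ in $\IdL(\RPos_\Tree)$ correspond to surjective strictly increasing functions $f:\RPos_\Tree\to\{1,\dots,\ell\}$. It is therefore enough to show that for every binary tree $\Tree$ with $n-1$ internal nodes, the number $g_\Tree(\ell)$ of strictly increasing (not necessarily surjective) functions $\RPos_\Tree\to\{1,\dots,\ell\}$ is
$$g_\Tree(\ell)=\prod_{i=0}^{n-1}\binom{\ell}{i}.$$
Surjectivity is then handled exactly as before. The number of strictly increasing functions onto a fixed $k$-subset of $\{1,\dots,\ell\}$ is the number of surjective ones onto $\{1,\dots,k\}$. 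So $g_\Tree(\ell)=\sum_k\binom{\ell}{k}s_\Tree(k)$, and binomial inversion gives the stated alternating sum.

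For the product formula we induct on $n$, using the decomposition $\RPos_\Tree=\RPos_{\Tree,k}\sqcup\RPos_{\Tree_1}\sqcup\RPos_{\Tree_2}$. Here $\RPos_{\Tree,k}$ is a rectangle $[n_1]\times[n_2]$ with $n_1+n_2=n$. Since the union is disjoint, $g_\Tree(\ell)$ is the product of the three counts, and the case $n=1$ is trivial.

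For the rectangle, substitute $f(x,y)\mapsto f(x,y)-(x+y-2)$. This turns strictly increasing maps $[n_1]\times[n_2]\to\{1,\dots,\ell\}$ into weakly increasing maps into $\{1,\dots,\ell-n_1-n_2+2\}$. Their number is $\Pi(n_1,n_2,\ell-n_1-n_2+1)$, taken to be $0$ when $\ell<n_1+n_2-1$. The whole proof then reduces to the identity
$$\Pi(n_1,n_2,\ell-n+1)\prod_{i=0}^{n_1-1}\binom{\ell}{i}\prod_{i=0}^{n_2-1}\binom{\ell}{i}=\prod_{i=0}^{n-1}\binom{\ell}{i}.$$

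Verifying this identity is the main obstacle. I would first write MacMahon's product in hook-content form,
$$\Pi(a,b,c)=\prod_{i=1}^{a}\prod_{j=1}^{b}\frac{i+j+c-1}{i+j-1}.$$
Then I would telescope in one of the variables, say $n_2$. Increasing $n_2$ by one multiplies the right-hand side by $\binom{\ell}{n}$, and multiplies the left-hand side by $\binom{\ell}{n_2}$ times the ratio $\Pi(n_1,n_2+1,\ell-n)/\Pi(n_1,n_2,\ell-n+1)$. The task is to show that this ratio equals $\binom{\ell}{n}/\binom{\ell}{n_2}$. Both sides are ratios of products of consecutive integers, so I expect this to reduce to routine cancellation of factorials. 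As a sanity check, take $n_2=1$: the rectangle is a chain of length $n_1$, so it contributes $\binom{\ell}{n_1+1}$, and the left-hand side becomes $\binom{\ell}{n}\prod_{i=0}^{n_1-1}\binom{\ell}{i}\binom{\ell}{0}$, which matches.

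If the telescoping becomes messy, I would instead argue bijectively. Strictly increasing maps on a disjoint union of chains correspond to subsets of $\{1,\dots,\ell\}$ of the corresponding sizes. The rectangle tableau can be cut into the columns of Gelfand--Tsetlin-type strict patterns, so that $\RPos_\Tree$ behaves like the staircase disjoint union of chains of lengths $0,1,\dots,n-1$. That staircase union has exactly $\prod_{i=0}^{n-1}\binom{\ell}{i}$ strictly increasing maps.
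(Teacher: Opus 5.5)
Your proposal is correct and follows the paper's own proof. You split $\RPos_\Tree$ into the root rectangle and the two subtree posets, count strictly increasing maps on the rectangle as $\Pi(n_1,n_2,\ell-n+1)$ via the shift $f\mapsto f-(x+y-2)$, multiply the three counts, and conclude by binomial inversion. Your telescoping step also supplies the product identity that the paper leaves implicit; the ratio is indeed $\prod_{i=1}^{n_1}\frac{\ell-n+i}{n_2+i}=\binom{\ell}{n}/\binom{\ell}{n_2}$.

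There is one slip in your $n_2=1$ sanity check. The rectangle $[n_1]\times[1]$ is a chain with $n_1$ elements, so it contributes $\binom{\ell}{n_1}=\binom{\ell}{n-1}$, not $\binom{\ell}{n_1+1}=\binom{\ell}{n}$. As you wrote it, the two sides do not agree. With this correction the $n_2=1$ case holds and can serve as the base of the telescoping.
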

\begin{proof}
	The number of strictly increasing functions from a rectangular poset of size $n\times m$ to $\{1,...,\ell\}$ is equal to $\Pi(n,m,\ell-m-n+1)$. With the same technique as in the proof of Proposition \ref{pr:1}, we prove that for all binary trees $T$ with $n-1$ internal nodes, the number of strictly increasing functions $f:\RPos_{\Tree}\to\{1,...,\ell\}$ is $$\displaystyle \prod_{i=0}^{n-1}\binom {\ell}{i}.$$
	We then use the inclusion-exclusion principle to obtain the number of surjective functions.
\end{proof}

For all $\sigma\in \Ss_n$, let $\overline \sigma $ be the image of $\sigma$ by central symmetry. The map $\sigma \mapsto \overline{\sigma}$ is an antiautomorphism of all middle orders on $\Ss_n$. We say that a subset of $\Ss_n$ is self-dual if it is invariant by this map. The number of self-complementary lower sets of a product poset $n\times m\times \ell$ is equal to $\Pi(n,m,\ell)_{q=-1}$ \cite{Stem94}, where
$$\Pi(n,m,\ell)_q:=\prod_{i=1}^n\prod_{j=1}^m\prod_{k=1}^\ell\frac{[i+j+k-1]_q}{[i+j+k-2]_q}.$$
% TODO partition planes autoconjuguées + q = graduation, cyclic sieving q = -1

We give the following propositions without proof, as they use the same arguments as Propositions \ref{pr:1} and \ref{pr:bool}, except that the inclusion-exclusion is done on self-dual sets.
\begin{prop}
	For all $\ell,n\ge 1$, all middle orders of size $n$ have the same number of self-dual complete chains of length $\ell$, which is equal to 
	$$\sum_{k=0}^{\ell}(-1)^{\lceil(\ell-k)/2\rceil}\begin{pmatrix}\ell \\ k\end{pmatrix}_{q=-1}\prod_{i=0}^{n-1}\begin{pmatrix}k+i-1\\ i\end{pmatrix}_{q=-1}$$ where $$
		\begin{pmatrix}n\\ k\end{pmatrix}_{q=-1}=\begin{cases}
			0 & \text{if $n$ is even and $k$ is odd;} \\
			\begin{pmatrix}\lfloor n/2\rfloor\\ \lfloor k/2\rfloor\end{pmatrix} & \text{otherwise.}
		\end{cases}$$
\end{prop}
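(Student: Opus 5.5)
Following the paper's approach for Propositions \ref{pr:1} and \ref{pr:bool}, I would translate self-dual complete chains of length $\ell$ in $\IdL(\RPos_\Tree)$ into surjective increasing functions $f:\RPos_\Tree\to\{1,\dots,\ell\}$ compatible with the duality. The antiautomorphism $\sigma\mapsto\overline\sigma$ corresponds, on lower sets, to complementation composed with a poset antiautomorphism $\iota$ of $\RPos_\Tree$. This $\iota$ is the central symmetry of each rectangle $\RPos_{\Tree,k}$, since reversing and complementing $w$ reverses the shuffle order and swaps the roles of values. A chain $x_0<\dots<x_\ell$ is self-dual iff $\overline{x_i}=x_{\ell-i}$, that is, iff $f(\iota(p))=\ell+1-f(p)$ for all $p$. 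The first step is to verify that $\iota$ preserves each component $\RPos_{\Tree,k}$ and acts there as the rotation by $\pi$. This should follow from Definition~\ref{def:ideal}: the values $i_{\Tree,k},\dots,k$ and $k+1,\dots,j_{\Tree,k}$ are mapped to themselves up to reversal under $w\mapsto\overline w$.

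Next, count the self-dual increasing (not necessarily surjective) functions $\RPos_\Tree\to\{1,\dots,\ell\}$ by induction on $\Tree$, exactly as in the proof of Proposition~\ref{pr:1}. Since $\iota$ preserves each rectangle, the count factors as a product over the rectangles $n_1\times n_2$. For one rectangle, self-dual increasing functions to $\{1,\dots,\ell\}$ are the self-complementary plane partitions in an $n_1\times n_2\times(\ell-1)$ box. By Stembridge's $q=-1$ phenomenon \cite{Stem94}, their number is $\Pi(n_1,n_2,\ell-1)_{q=-1}$. I then need the $q$-analogue of the identity used before:
$$\Pi(n_1,n_2,\ell-1)_q\cdot \prod_{i=0}^{n_1-1}\binom{\ell+i-1}{i}_q\prod_{i=0}^{n_2-1}\binom{\ell+i-1}{i}_q=\prod_{i=0}^{n-1}\binom{\ell+i-1}{i}_q ,$$
which holds as an identity of rational functions by the same telescoping as at $q=1$. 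Specializing at $q=-1$ then gives $\prod_{i=0}^{n-1}\binom{\ell+i-1}{i}_{q=-1}$ self-dual increasing functions. The value of $\binom{n}{k}_{q=-1}$ stated in the proposition is the standard evaluation of Gaussian binomials at $q=-1$.

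Finally, surjectivity is handled by inclusion–exclusion restricted to self-dual data. A self-dual increasing function with image in a subset $A\subseteq\{1,\dots,\ell\}$ forces $A$ to be symmetric under $t\mapsto\ell+1-t$. So I would sum over symmetric subsets $A$ with Möbius signs on the lattice of symmetric subsets. That lattice is boolean on the orbits of $t\mapsto \ell+1-t$, which number $\lceil \ell/2\rceil$, with the fixed point $(\ell+1)/2$ present when $\ell$ is odd. Grouping by $|A|=k$, the number of symmetric $k$-subsets is $\binom{\ell}{k}_{q=-1}$. The Möbius sign is $(-1)^{\#\text{orbits missing}}=(-1)^{\lceil(\ell-k)/2\rceil}$. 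The number of self-dual increasing functions into $A$ depends only on $k$, giving the factor $\prod_i\binom{k+i-1}{i}_{q=-1}$.

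The main obstacle is this inclusion–exclusion bookkeeping: checking that $\binom{\ell}{k}_{q=-1}$ exactly counts symmetric $k$-subsets. It vanishes when $\ell$ is even and $k$ is odd, and equals $\binom{\lfloor\ell/2\rfloor}{\lfloor k/2\rfloor}$ otherwise. I also need that the exponent $\lceil(\ell-k)/2\rceil$ counts the removed orbits, noting that the middle singleton orbit contributes one orbit. A secondary check is that $\iota$ genuinely preserves each rectangle rather than permuting rectangles.
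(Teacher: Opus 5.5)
Your proposal is correct and follows the route the paper indicates. The paper states this result without proof, saying only that it follows as in Propositions~\ref{pr:1} and~\ref{pr:bool} with the inclusion--exclusion done over self-dual sets; your steps fill this in soundly: increasing maps with $f\circ\iota=\ell+1-f$, Stembridge's count $\Pi(a,b,\ell-1)_{q=-1}$ on each rectangle, the $q$-product identity (both sides equal $\prod_{h=1}^{n-1}\bigl(\frac{1-q^{\ell-1+h}}{1-q^{h}}\bigr)^{n-h}$ by grouping elements of $\Pos_n$ by height), and M\"obius inversion over symmetric subsets, where your orbit count and sign are right. For your first step, the antiautomorphism that makes $\iota$ the half-turn of each rectangle is the word reversal $w\mapsto ww_0$, which fixes the value sets and reverses each $w^{L,k}$ and $w^{R,k}$. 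It is not the reverse-complement $w_0ww_0$, which preserves length and in general does not preserve the rectangles of $\RPos_\Tree$.
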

%TODO sketch of proof
\begin{prop}
	For all $\ell,n\ge 1$, all middle orders of size $n$ have the same number of self-dual boolean chains of length $\ell$, which is equal to 
	$$\sum_{k=0}^{\ell}(-1)^{\lceil(\ell-k)/2\rceil}\begin{pmatrix}\ell \\ k\end{pmatrix}_{q=-1}\prod_{i=0}^{n-1}\begin{pmatrix}k\\ i\end{pmatrix}_{q=-1}.$$
\end{prop}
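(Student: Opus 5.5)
Self-dual boolean chains of length $\ell$ in $\IdL(\RPos_\Tree)$ correspond to surjective strictly increasing functions $f:\RPos_\Tree\to\{1,\dots,\ell\}$ that are compatible with the antiautomorphism $\sigma\mapsto\overline\sigma$. On $\RPos_\Tree$ this antiautomorphism acts as an order-reversing involution $\iota$, which reverses each rectangle $\RPos_{\Tree,k}$. Compatibility means $f(\iota(x))=\ell+1-f(x)$, which is the same as requiring the chain to be invariant under the $\Z/2$-action. The plan follows Proposition~\ref{pr:bool}. First count self-dual strictly increasing functions to $\{1,\dots,k\}$, not necessarily surjective. Then recover the surjective ones by a self-dual inclusion–exclusion, which produces the signed $q=-1$ binomials.

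\textbf{Step 1 (one rectangle).} Strictly increasing functions from an $n\times m$ rectangle to $\{1,\dots,k\}$ become, after subtracting $i+j$, weakly increasing functions into a box of height $k-m-n+1$. These are plane partitions in an $n\times m\times(k-m-n+1)$ box. Self-duality of $f$ corresponds exactly to self-complementarity of the plane partition. Stembridge's $q=-1$ phenomenon~\cite{Stem94} then gives the count $\Pi(n,m,k-m-n+1)_{q=-1}$.

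\textbf{Step 2 (induction over the tree).} Write $\RPos_\Tree=\RPos_{\Tree,r}\sqcup\RPos_{\Tree_1}\sqcup\RPos_{\Tree_2}$, where $r$ is the root. The key point to check is that $\iota$ preserves each component. I would verify this by computing $\overline\sigma$ explicitly. Central symmetry reverses positions and complements values, so it maps $w^{L,k}$ and $w^{R,k}$ to the corresponding subwords for the mirrored tree. If this ends up exchanging $\Tree_1$ and $\Tree_2$ rather than fixing them, I would instead pair the two subtree components under $\iota$. In that case the self-dual functions on the pair are determined freely by their values on one side, which contributes an ordinary (non-self-dual) count. Either way, the number of self-dual functions is multiplicative over components. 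I would then show by induction that it equals $\prod_{i=0}^{n-1}\binom{k}{i}_{q=-1}$. This reduces to a $q$-analogue of the identity used in Proposition~\ref{pr:bool}:
\[
\Pi(n_1,n_2,k-n_1-n_2+1)_q\prod_{i<n_1}\binom{k}{i}_q\prod_{i<n_2}\binom{k}{i}_q=\prod_{i<n_1+n_2}\binom{k}{i}_q
\]
up to a power of $q$. This can be checked as a product formula and then specialised at $q=-1$.

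\textbf{Step 3 (surjectivity).} A self-dual function with image $S\subseteq\{1,\dots,\ell\}$ requires $S$ to be symmetric under $t\mapsto \ell+1-t$. Self-dual inclusion–exclusion over symmetric subsets of size $k$ gives coefficients equal to the number of symmetric $k$-subsets of $\{1,\dots,\ell\}$, namely $\binom{\ell}{k}_{q=-1}$ as defined. The sign is $(-1)^{\lceil(\ell-k)/2\rceil}$, because removing a symmetric pair counts once and removing the middle point counts once.

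The main obstacle is Step 2. It requires identifying how $\iota$ acts on the rectangles and checking that the $q$-product identity holds after specialisation. This matters most when $k-n_1-n_2+1<0$ or when parities interact, since there the $q=-1$ values vanish in a way that has to match on both sides.
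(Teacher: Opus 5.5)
Your plan is the one the paper has in mind. The paper gives no proof, only saying that the arguments of Propositions~\ref{pr:1} and~\ref{pr:bool} go through with the inclusion--exclusion taken over self-dual sets. Steps~1 and~3 are correct.

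The problem is exactly the point you call the key check in Step~2. The map you propose to compute, reversing positions \emph{and} complementing values ($w\mapsto w_0ww_0$), sends an inversion $(x,y)$ to the inversion $(n+1-y,n+1-x)$. It therefore preserves length and cannot be an antiautomorphism. Unwinding Definition~\ref{def:ideal}, it is an order \emph{isomorphism} from $(\Ss_n,\le_\Tree)$ onto $(\Ss_n,\le_{\bar\Tree})$, where $\bar\Tree$ is the mirror tree. So it induces no involution of $\RPos_\Tree$, and carrying out your computation would leave you stuck.

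Your fallback does not rescue this. If $\iota$ exchanged two components, that pair would contribute an ordinary, non-self-dual count, and the product would no longer be $\prod_i\binom{k}{i}_{q=-1}$. So the claim that ``either way'' you obtain this formula is false.

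The map to use is plain word reversal $w_1\cdots w_n\mapsto w_n\cdots w_1$, i.e.\ $w\mapsto ww_0$ in one-line notation, as in Section~\ref{sec:minuscule}. The paper's ``central symmetry'' must be read as that of the permutahedron, not of the permutation matrix. Reversal reverses both $w^{L,k}$ and $w^{R,k}$ and turns ``appears after'' into ``appears before''. Definition~\ref{def:ideal} then gives directly $I_\Tree(w_n\cdots w_1)=\RPos_\Tree\setminus\iota(I_\Tree(w))$, with $\iota$ the $180^\circ$ rotation of each rectangle $\RPos_{\Tree,k}$. Every component is therefore $\iota$-stable, as you asserted in your opening paragraph, and the self-dual count is multiplicative over components.

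For the identity, ``up to a power of $q$'' is not enough, since a stray $q^m$ becomes a sign at $q=-1$; fortunately the identity is exact. Take a root rectangle $n_1\times n_2$ and $c=k-n_1-n_2+1\ge 0$, and write $[x]_q!=[1]_q\cdots[x]_q$. The identity is equivalent to $\Pi(n_1,n_2,c)_q\prod_{m<n_2}\binom km_q=\prod_{m<n_2}\binom{k}{n_1+m}_q$. Both $\Pi(n_1,n_2,c)_q$ and $\prod_{m<n_2}\binom{k}{n_1+m}_q\big/\binom km_q$ equal
\[\prod_{j=1}^{n_2}\frac{[k-n_2+j]_q!\,[j-1]_q!}{[k-n_1-n_2+j]_q!\,[n_1+j-1]_q!}.\]
For the second expression, put $m=j-1$ and note that $\prod_{m<n_2}[k-m]_q!=\prod_{j=1}^{n_2}[k-n_2+j]_q!$ and $\prod_{m<n_2}[k-n_1-m]_q!=\prod_{j=1}^{n_2}[k-n_1-n_2+j]_q!$. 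This is an identity of polynomials, so it specialises to $q=-1$ with no parity bookkeeping. When $k<n_1+n_2-1$ there are no strictly increasing maps on the rectangle and $\binom{k}{n_1+n_2-1}_q=0$, so both sides vanish. With these two repairs your argument proves the statement.
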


\section{Middle orders on parabolic quotients of finite Coxeter groups}
\label{sec:parabolic}
Let $W$ be a finite Coxeter group with set of generators $S$, and $J\subset S$. Let $W_J$ be the parabolic subgroup generated by $J$. Cosets in $W_J\backslash W$ are intervals of the left weak order (though we will consider later the right weak order on $W_J$), and minimal elements of these cosets form an interval of the right weak order, which we write as $^JW$. Cosets are also intervals of the Bruhat order, and the Bruhat order restricted to minimal elements defines the Bruhat order on $^JW$. The weak order on $^JW$ is refined by the Bruhat order, so we can consider middle orders on $^JW$ as we did for $W$.
Every element $w\in W$ has a unique factorization $w_J\cdot \, ^Jw$ with $^Jw\in \, ^JW$ and $w_J\in W_J$ (see \cite{BjBr05}, Proposition 2.4.4).
Using this, we obtain the following inclusion of orders:

\begin{equation}
    \label{eq:inclusion}
    (W,\le_R)\subset (W_J,\le_R)\times (^JW,\le_R) \subset  (W_J,\le)\times (^JW,\le)\subset (W,\le).
\end{equation}

Hence, if there exists middle orders on $W_J$ and $^JW$, their product defines a middle order on $W$.

\begin{figure}[H]
\centering
\includegraphics[width=0.45\linewidth]{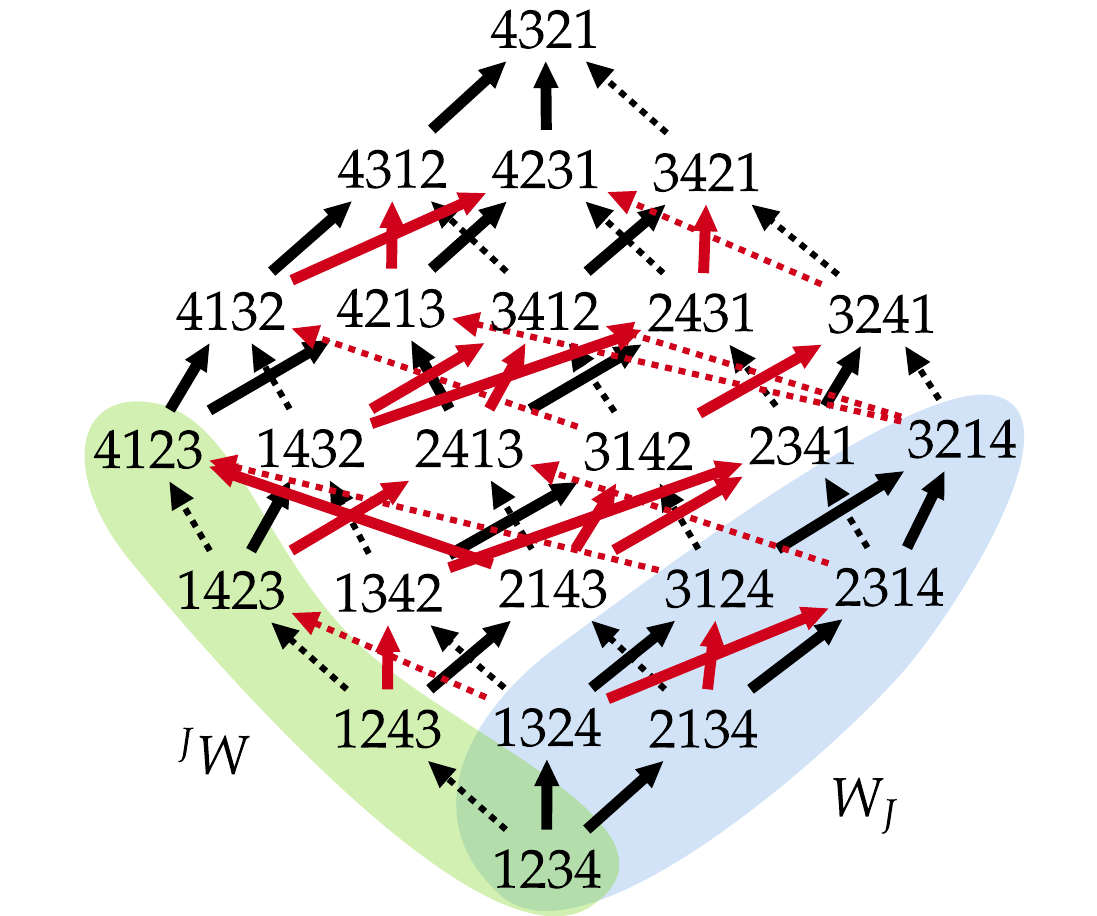}
\includegraphics[width=0.45\linewidth]{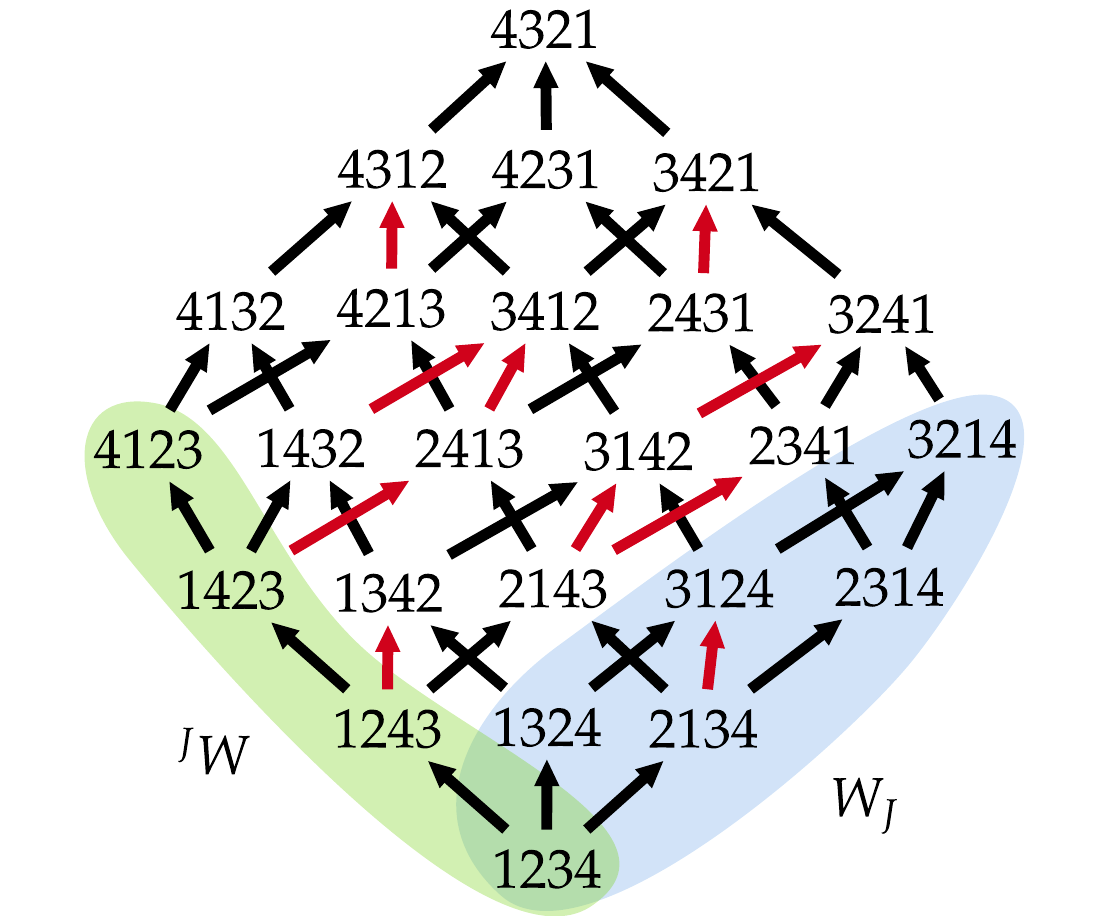}
\caption{\centering On the left: the weak and Bruhat orders on $\Ss_4$, with a parabolic subgroup $W_J$ and its quotient $^JW$ given by $J=\{(12),(23)\}$. On the right: A middle order on $\Ss_4$, obtained as the product of middle orders on $^JW$ and $W_J$.}
\label{fig:quotient}
\end{figure}

Let us focus on the case of the symmetric group. Elements of $W_J\backslash \Ss_n$ can be seen as \defn{multipermutations} with $\alpha_k$ occurrences of $k$ for $1\le k\le n-|J|$, where $\alpha$ is an integer composition of $n$ such that $s_i\notin J$ if and only if we can write $i=\alpha_1+\alpha_2+\dots+\alpha_k$ for some $k$. We define $\Ss_\alpha$ as the set of such multipermutations.

We build middle orders on $\Ss_\alpha$ that are in bijection with binary trees on $n-|J|$ leaves, and prove they are the only middle orders on $W_J\backslash \Ss_n$. The construction of middle orders and proof of their exhaustivity work exactly as what we did in Sections \ref{sec:construction} and \ref{sec:exhaustivity}, so we will only explain it shortly.

Multipermutations are encoded by \defn{partial inversions sets,} \emph{i.e.}, inversion sets of their standardization, \emph{i.e.}, the minimal element of their corresponding coset in $W_J\backslash \Ss_n$. Multipermutations in $\Ss_\alpha\cong W_J\backslash \Ss_n$ with $J\subset\{s_1,...,s_{n-1}\}$ have their inversions sets contained in $\Pos_\alpha$, which we define as the subposet of $\Pos_n$ containing $(i,j)$ if $s_i\notin J$ or $s_{j-1}\notin J$. 
Let $U$ be a binary search tree whose set of labels is $\{1,...,n-1\}\setminus J$. $U$ defines a rectangulation of $\Pos_\alpha$, which we write $\RPos_{U,n}$, obtained by restricting the order of $\RPos_T$ to $\Pos_\alpha$ where $\Tree$ is any completion of $\PTree$ into a binary tree whose set of labels is $\{1,...,n-1\}$. We then define $I_{U,n}(w)$ as the restriction of $I_\Tree(w)$ to $\RPos_{U,n}$. 
Similarly to Theorem~\ref{pr:bij}, this bijection defines a middle order on $\Ss_\alpha$:
\begin{thm}
	Let $U$ be a binary search tree with set of labels $\{1,...,n-1\}\setminus J$. $I_{U,n}$ is a bijection between $\Ss_\alpha$ and lower sets of $\RPos_{U,n}$. We define a partial order $\le_{U,n}$ on $\Ss_\alpha$ where $v \le_{U,n} w$ if and only if $I_{U,n}(v)\subset I_{U,n}(w)$. Then $(\Ss_\alpha,\le_{U,n})$ is a distributive lattice.
\end{thm}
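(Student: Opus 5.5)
The plan is to reduce the statement to Theorem~\ref{pr:bij}: we identify $\Ss_\alpha$ with the minimal coset representatives $^J\Ss_n$ and compare $I_{U,n}$ with $I_\Tree$ for a well-chosen completion $\Tree$ of $\PTree$. I would first fix the completion $\Tree$ by inserting the labels of $J$ below the nodes of $\PTree$, so that the rectangles $\RPos_{\Tree,k}$ with $k\notin J$ are exactly the rectangles of $\RPos_{U,n}$ (after restriction to $\Pos_\alpha$). The rectangles with $k\in J$ then lie inside the complement $\Pos_n\setminus\Pos_\alpha$, i.e., inside the triangles $\Pos_{\alpha_m}$ coming from the blocks of $\alpha$. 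I would also check that the resulting $\RPos_{U,n}$ does not depend on the chosen completion, which is needed for the definition to make sense. This uses the same commutation argument as in the proposition on $\RPos_\sigma=\RPos_\tau$: removing rectangles for labels in $J$ only touches elements outside $\Pos_\alpha$.

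Next, for $w$ a minimal coset representative (the standardization of a multipermutation), each block of equal values appears in increasing order in $w$. Hence $I_\Tree(w)$ restricted to the rectangles indexed by $k\in J$ is empty, since those rectangles only compare values within one block. It follows that $I_\Tree(w)=I_{U,n}(w)$ viewed inside $\RPos_\Tree$. Conversely, a lower set of $\RPos_{U,n}$ extends (by the empty set on the $J$-rectangles) to a lower set of $\RPos_\Tree$, because the components are disjoint. By Theorem~\ref{pr:bij} this lower set equals $I_\Tree(w)$ for a unique $w\in\Ss_n$.

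It then remains to show that this $w$ is a minimal coset representative. This is the main obstacle. By the recursive reconstruction in the proof of Theorem~\ref{pr:bij}, $w$ restricted to the values of a block is obtained by shuffling according to the empty lower sets on the $J$-rectangles. Empty lower sets yield the increasing shuffle, so $w$ has no inversion inside any block, which is exactly minimality. The delicate point is that the reconstruction for $k\notin J$ interleaves subwords containing values from a block with other values, and one must verify that this never creates an inversion between two values of the same block. I would argue this by noting that two values of the same block $m$ are always separated at some node of $\Tree$ with label in $J$, never at a node of $\PTree$. So their relative order is decided only by a $J$-rectangle, which is empty.

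As a sanity check, the count matches: the product of rectangle sizes gives $n!/\prod_m\alpha_m!=|\Ss_\alpha|$ lower sets. This count could alternatively replace the surjectivity argument once injectivity is established. Finally, $I_{U,n}$ is a bijection onto $\IdL(\RPos_{U,n})$, so $\le_{U,n}$ is the transported inclusion order. By Birkhoff's representation theorem (Section~\ref{sec:distri}), $(\Ss_\alpha,\le_{U,n})$ is a distributive lattice.
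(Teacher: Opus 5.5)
Your proof is correct, but it is organized differently from the paper's. The paper gives no separate proof. It only says the result holds ``similarly to Theorem~\ref{pr:bij}'', meaning one reruns that argument on $\RPos_{U,n}$. Each rectangle encodes the shuffle of $w^{L,k}$ and $w^{R,k}$, and the recursive reconstruction now stops at the blocks of $\alpha$, whose values must appear in increasing order. Surjectivity then comes from counting $n!/\prod_m\alpha_m!=|\Ss_\alpha|$ lower sets.

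You instead use Theorem~\ref{pr:bij} as a black box, identifying ${}^J\Ss_n$ with the permutations $w$ whose $I_\Tree(w)$ vanishes on the $J$-rectangles. The step you call delicate is actually immediate. $(k,k+1)$ is the minimum of $\RPos_{\Tree,k}$, and $(k,k+1)\notin I_\Tree(w)$ means every letter of $w^{L,k}$ precedes every letter of $w^{R,k}$. In particular $k$ precedes $k+1$, so $s_k$ is not a left descent of $w$.

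Your route makes precise several points the paper glosses over:
\begin{itemize}
\item It explains why the completion must place $J$ below $\PTree$. For $n=3$, $J=\{s_1\}$ and $\Tree$ rooted at $1$, the restriction to $\Pos_\alpha$ is a two-element antichain, with $4\neq 3$ lower sets.
\item It shows that $\RPos_{U,n}$ and $I_{U,n}$ do not depend on the chosen completion.
\item It exhibits $(\Ss_\alpha,\le_{U,n})$ as the interval of $(\Ss_n,\le_\Tree)$ below the element whose lower set is $\RPos_{U,n}$, which gives distributivity for free.
\end{itemize}

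You are also implicitly, and rightly, taking $\Pos_\alpha$ to consist of the pairs $(i,j)$ lying in different blocks, i.e.\ some $s_m\notin J$ with $i\le m<j$. The condition ``$s_i\notin J$ or $s_{j-1}\notin J$'' printed in the paper would already break the count for $\alpha=(1,2,2)$.
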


\begin{figure}[H]
\centering
\includegraphics[width=0.9\linewidth]{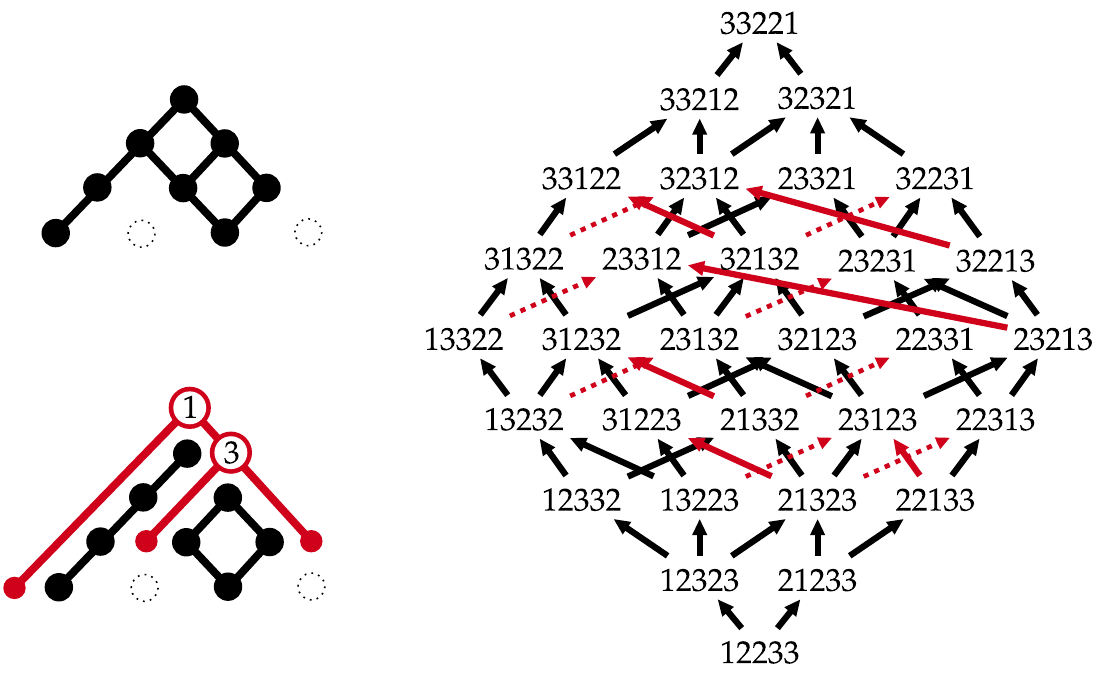}
\caption{On the right: a middle order on $\Ss_{122}$ induced by a binary tree $U$ with set of labels $\{1,3\}$. On the left: the posets $\Pos_{122}$ and $\RPos_{U,5}$.}
\label{fig:quotient3}
\end{figure}

%TODO retravailler
This construction yields all middle orders on $\Ss_\alpha$, because if it was not the case, we could construct middle orders on $\Ss_n$ different from those we already know (we will see in Section \ref{sec:minuscule} that all middle orders on $\Ss_n$ are obtained as a product of middle orders on a parabolic subgroups and on its quotient). We can also prove it by generalizing the results of Section~\ref{sec:exhaustivity} to multipermutations. Equivalence classes are defined as in Definition \ref{def:1}, and Lemma \ref{lem:1} becomes:

\begin{lem}
	Let $E$ be the set of edges of a middle order on $\Ss_\alpha$, $1\le i\le a<b\le j\le n$, and $(v_1,w_1),(v_2,w_2)\in \overline {(a,i,j,b)}$ such that for all $i\le k\le j$, $k$ appears with the same multiplicity on each side of $a$ and $b$ in $v_1$ and $v_2$. Then $(v_1,w_1)\in E$ if and only if $(v_2,w_2)\in E$.
\end{lem}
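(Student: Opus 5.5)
The plan is to adapt the proof of Lemma~\ref{lem:1} to multipermutations, using Proposition~\ref{pr:rhombus} together with local moves that change the word only in one spot while fixing the transposed pair $(a,b)$. On $\Ss_\alpha$, a Bruhat cover exchanges two distinct values $a<b$ at chosen occurrences. The class $\overline{(a,i,j,b)}$ is defined through the values lying strictly between the two transposed positions, exactly as in Definition~\ref{def:1}. I will show that two edges of the same class are equivalent whenever they have the same multiplicity of each $k\in\{i,\dots,j\}$ to the left of, between, and to the right of the transposed positions.

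First step: swap two adjacent distinct letters $k<\ell$ that both sit in the same region relative to $a$ and $b$ (left of $a$, between $a$ and $b$, or right of $b$). Take the word $\word_1 k\ell\word_2 a\word_3 b\word_4$ in $\Ss_\alpha$. In the Bruhat order of $\Ss_\alpha$ it is covered by $\word_1 k\ell\word_2 b\word_3 a\word_4$ and by $\word_1 \ell k\word_2 a\word_3 b\word_4$. I check that their only common upper cover is $\word_1 \ell k\word_2 b\word_3 a\word_4$; Bruhat covers of multipermutations are the covers of the minimal coset representatives, so the $\Ss_n$ computation transfers. Proposition~\ref{pr:rhombus} then gives the implication in one direction. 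The reverse implication comes from the dual statement applied to the common lower cover. If $k=\ell$ there is nothing to prove, since the two words coincide. This step shows that only the multiset of letters in each region matters, not their order.

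Second step: move letters $k\notin\{i,\dots,j\}$ across $a$ or across $b$. For $k<a$ we have $k\le i-2$, so the value $i-1$ lies between $a$ and $b$ in every edge of the class. In $\Ss_n$ this blocks the would-be second common upper cover $\word_1 ab\word_2 k\word_3$, leaving $\word_1 bk\word_2 a\word_3$ as the only common upper cover. The rhombus argument of Lemma~\ref{lem:1} therefore applies verbatim, provided $k$ is distinct from $a$ and $b$; this holds because $k<i\le a$. The case $k>b$, and moves across $b$, are symmetric. Once these letters are moved out of the way, the first step lets me reorder each region freely, so any two edges satisfying the hypothesis can be joined by a sequence of equivalences.

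The main obstacle is repeated values. For example, a copy of $a$ or $b$ itself may sit next to the transposed positions, so that $\word_1 a(a\word_2 b)$ and $\word_1 (a a\word_2 b)$ name the same multipermutation edge. There may also be copies of $i-1$ outside the interval that affect which common covers exist. For these cases I would first pass to standardizations: each multipermutation is identified with its minimal coset representative, and Bruhat covers in $\Ss_\alpha$ correspond to covers among these representatives. I would then verify that each rhombus used above stays inside $^J\Ss_n$, so that no extra common cover appears. If that check fails in some degenerate case, I would fall back on a direct enumeration of the common covers, which only needs to be done for words of length at most five.
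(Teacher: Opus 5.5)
Your strategy is the right one: reorder letters within a region, and push letters outside $[i,j]$ across $a$ or $b$ using Proposition~\ref{pr:rhombus}. It is also all the paper offers, since it gives no separate proof and only says that Lemma~\ref{lem:1} carries over. The gap is that the proposal never handles the case where repeated letters actually change the argument, and the two devices you offer for it are wrong as stated.

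First, in $\Ss_\alpha$ it is false that $k<a$ with $k\notin[i,j]$ forces $k\le i-2$. The class only requires \emph{some} copy of $i-1$ between $a$ and $b$, so another copy of $i-1$ may sit outside the middle (and symmetrically for $j+1$). Edges of one class can differ exactly in where such copies sit, so your normalization must move them.

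Second, the shortcut ``covers of $\Ss_\alpha$ are covers of minimal coset representatives, so the $\Ss_n$ computation transfers'' fails precisely in this case. Take the multiset $\{1,1,2,3\}$ and the edges $(2\,1\,1\,3)$ and $(2\,1\,3)\,1$, both in $\overline{(2,2,3,3)}$ and both satisfying your hypothesis. Deriving the first from the second needs $x=2113$ to be the unique common lower cover of $y=3112$ and $z=2131$. That holds in $\Ss_\alpha$. However, the standardizations $4123$ and $3142$ have two common lower covers in $\Ss_4$, namely $3124$ and $2143$, because the blocking copy of $1$ gets the smaller label and leaves the relevant value interval. The extra one is discarded only because $2143$ is not a minimal coset representative, so uniqueness does not come from Lemma~\ref{lem:1}. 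The same happens for a copy of $j+1$ moved from the left of $a$ into the middle; your ``symmetric'' remark skips exactly these lower-cover moves. (Your other worry is not an issue: $\word_1(a\,a\,\word_2\,b)$ is not an edge at all, and the pair of words always determines the transposed positions.)

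The repair is short: argue directly in $\Ss_\alpha$. Exchanging letters $c<d$ at positions $p<q$ changes the inversion count by $1+2\#\{r: c<u_r<d\}+\#\{r: u_r\in\{c,d\}\}$, with $p<r<q$. So it is a Bruhat cover iff no letter with value in the closed interval $[c,d]$, including further copies of $c$ or $d$, lies strictly between.

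With this criterion, your second step works as follows. The letters $k,a,b$ are distinct, so the words at one transposition from both $y$ and $z$ are the same three as for permutations. A copy of $i-1$ (resp. $j+1$) kept between $a$ and $b$ rules out the unwanted one even when $k$ is itself a copy of that value. Check this in every move bringing a letter into the middle, including the two where the blocker is used in the lower-cover rhombus.

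In your first step, a common cover corresponds to splitting the four positions where $y$ and $z$ differ into two transposed pairs. A non-natural splitting exists only when $\{k,\ell\}=\{a,b\}$, and the resulting word is then not a cover because the adjacent copy of $b$ (or $a$) lies between the swapped positions.

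With these checks, and keeping at least one copy of $i-1$ and of $j+1$ in the middle throughout, your normal-form argument goes through.
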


As in Proposition~\ref{prop:1}, we use the SAT solver Glucose~\cite{AuSi18} to verify there is no other middle order on $\Ss_\alpha$ for all $\alpha$ of length $5$, to show that the $\overline {(a,i,j,b)}$ are equivalence classes of edges of middle orders on $\Ss_\alpha$. Propositions~\ref{pr:exclusion}, \ref{pr:implications}, \ref{pr:leftright} and \ref{pr:diamond} are generalized without additional work to middle orders on multipermutations. Since equivalence classes of edges on $\Ss_\alpha$ correspond to equivalence classes of edges on $\Ss_{n-|J|}$, results of~\cref{sec:GT} can be generalized to middle orders on multipermutations to obtain the following result.

\begin{thm}
    Every distributive lattice between the weak and Bruhat orders on $\Ss_\alpha$ is equal to $(\Ss_\alpha,\le_{U,n})$ for some binary tree $U$ with labels $\{1,...,n\}\setminus J$.
\end{thm}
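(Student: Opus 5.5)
We will mirror the exhaustivity argument of Section~\ref{sec:exhaustivity}, now on multipermutations. Let $E$ be the set of strict Bruhat edges of a middle order on $\Ss_\alpha$. By the preceding lemma and the SAT verification for compositions of length $5$, membership of an edge in $E$ depends only on its class $\overline{(a,i,j,b)}$. Here $a<b$ are letters of the multipermutation, i.e.\ blocks of $\alpha$, so the indices range over $\{1,\dots,m\}$ with $m=n-|J|$ the number of distinct values. We will identify these classes with the classes of strict Bruhat edges of $\Ss_m$, sending $\overline{(a,i,j,b)}$ on $\Ss_\alpha$ to the class with the same name on $\Ss_m$. Each class is nonempty on both sides, since one can always place one copy of each of $i-1$ and $j+1$ between $a$ and $b$ and the remaining copies elsewhere.

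The next step is to check that each local relation proved for $\Ss_n$ transfers to $\Ss_\alpha$. These are Proposition~\ref{pr:exclusion}, Proposition~\ref{pr:implications}, Proposition~\ref{pr:leftright} and Proposition~\ref{pr:diamond}. Each proof exhibits a small pattern $\word_1 x y z \word_2$ (or a length-four analogue), compares its Bruhat covers, and applies Proposition~\ref{pr:rhombus} or the absence of a common cover. We will redo each pattern using a single copy of each letter involved, with the other copies of repeated letters placed outside the active window. We must then check that the Bruhat covers among these multipermutations are the same as in the standardized case. This holds because Bruhat covers of $^JW$ are obtained by transposing two distinct letters with no intermediate value between them. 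Repeated letters never create new covers between them, and they do not block covers as long as they lie outside the window.

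Once these relations hold, the sets of left edges of $E$ satisfy exactly the same constraints as in $\Ss_m$. By Proposition~\ref{pr:3} and Proposition~\ref{pr:5}, they correspond to Gelfand--Tsetlin triangles in $\GT_{m-1}$ satisfying the parallelogram condition, hence to binary trees. By Proposition~\ref{pr:leftright} and Proposition~\ref{pr:exclusion}, the left edges determine all of $E$. So there are at most $C_{m-1}$ middle orders on $\Ss_\alpha$. A middle order is determined by its covers, all of which are weak or strict Bruhat edges, so it is determined by $E$.

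On the other side, the construction gives distributive lattices $(\Ss_\alpha,\le_{U,n})$ indexed by binary search trees $U$ on the $m-1$ labels $\{1,\dots,n-1\}\setminus J$, which is $C_{m-1}$ trees. By the preceding theorem these are middle orders: the cover description of Proposition~\ref{pr:covers} restricts to partial inversion sets, so they lie between the weak and Bruhat orders. It remains to show that they are pairwise distinct. We expect this to follow because distinct $U$ give distinct rectangulations of $\Pos_\alpha$, and hence distinct sets of left edges. Matching the upper bound with this count finishes the proof.

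The main obstacle is the transfer step. Repeated letters could sit between the transposed values and change which Bruhat covers exist. We would handle this by always choosing representatives where repeated copies are pushed to the ends, and the equivalence lemma justifies choosing such representatives.
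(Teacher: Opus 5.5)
Your proposal is correct and is essentially the paper's own second argument. Like the paper, you extend the equivalence lemma and the length-$5$ SAT check to $\Ss_\alpha$, transfer Propositions~\ref{pr:exclusion}, \ref{pr:implications}, \ref{pr:leftright} and~\ref{pr:diamond} using representatives with one copy of each active letter inside the window, and compare the Gelfand--Tsetlin/binary-tree upper bound with the $C_{m-1}$ constructed orders. (The paper also mentions a shortcut via products with middle orders on $W_J$ and the classification on $\Ss_n$, and it leaves implicit the pairwise distinctness you flag; this follows from the cover description, since different rectangulations give some transposition with different enclosing intervals $[i_{\Tree,k},j_{\Tree,k}]$.)
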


\section{Minuscule middle orders}
\label{sec:minuscule}
\subsection{Definition of minuscule middle orders}
We have seen in Section~\ref{sec:parabolic} that the product of middle orders on a parabolic subgroup $W_J$ and its quotient $^JW$ forms a middle order on $W$. 
%This can help us to construct middle orders on Coxeter groups by recursion on the number of generators.
For Weyl groups, parabolic quotients on which the Bruhat order is a distributive lattice have been classified \cite{Pro84}. These quotients are given by $J=S\setminus \{s\}$ where $s$ corresponds to a minuscule root $\alpha_s\in \Phi_+$.
Since they are maximal parabolic quotients, the weak and Bruhat orders coincide, giving exactly one middle order on $^JW$. The quotient $^JW$ is isomorphic to $\IdL(\Pos_s)$, where $\Pos_s$ (which is called a minuscule poset) is the upper set of $\alpha_s$ in $\Phi_+$.
Hence, if $\L$ is a distributive lattice such that $(W_J,\le_R)\subset \L\subset (W_J,\le)$, we then have $(W,\le_R)\subset \L\times \, ^JW \subset (W,\le)$.
 We define \defn{minuscule middle orders} as the middle orders obtained using only such quotients.
 %, as the connected components of their irreducibles posets are minuscule posets. 
\begin{defi}
   Let $(W,S)$ be a Weyl group, and let $s_{i_1},\dots, s_{i_r}$ be a permutation of $S$ such that for all $1\le k\le r$, $\alpha_{s_{i_k}}$ is a minuscule root of $W_{\{s_{i_k},\dots,s_{i_r}\}}$. Minuscule middle orders are posets of the form
    $(W,\le_{s_{i_1},\dots, s_{i_r}}):=(W_J,\le_{s_{i_2},\dots,s_{i_r}})\times \, ^JW$ where $J=S\setminus\{s_{i_1}\}$, which are guaranteed to be middle orders by~\cref{eq:inclusion}.
\end{defi}
\begin{figure}[H]
\centering
\includegraphics[width=0.8\linewidth]{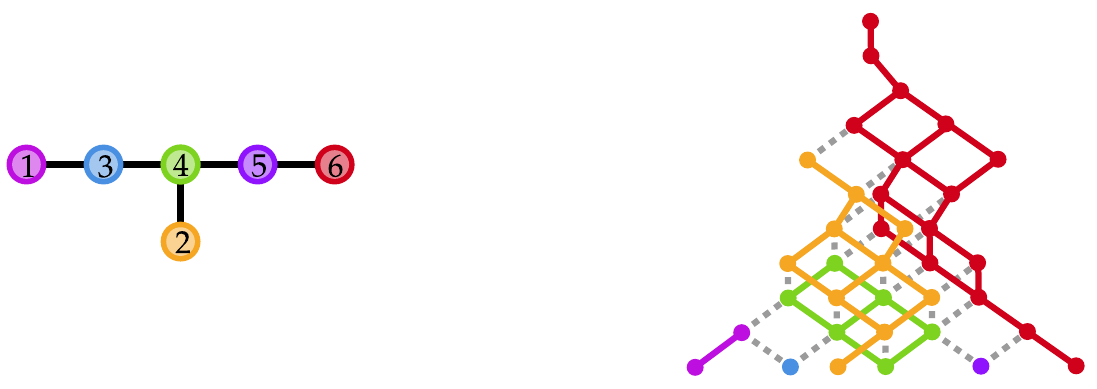}
\caption{\centering The Coxeter diagram of $E_6$, and a partition of its root poset into minuscule posets induced by the permutation $s_6,s_2,s_4,s_1,s_3,s_5$}
\label{fig:quotiente6bis}
\end{figure}

We define $\RPos_{s_{i_1},\dots, s_{i_r}}$ recursively as the disjoint union of $\RPos_{s_{i_2},\dots, s_{i_r}}$ and the minuscule poset $\Pos_{s_{i_1}}$, with $\RPos_\emptyset=\emptyset$. Since a minuscule middle order is the product of distributive lattices whose posets of irreducibles are minuscule posets, we have an isomorphism $$(W,\le_{s_{i_1},\dots, s_{i_r}})\cong \IdL(\RPos_{s_{i_1},\dots, s_{i_r}}).$$
% TODO proposition ?

In type $A$ all middle orders are minuscule, but for other Coxeter groups this is not the case in general. For example, $B_3$ has $4$ middle orders isomorphic to minuscule middle orders on Weyl groups $B_3$ or $C_3$, but it has $8$ middle orders in total (see Figure \ref{fig:typeb}). Weyl groups $E_8$, $F_4$ and $G_2$ have no minuscule middle order since they have no minuscule root.

\begin{figure}[H]
	\centering
	\includegraphics[width=1\linewidth]{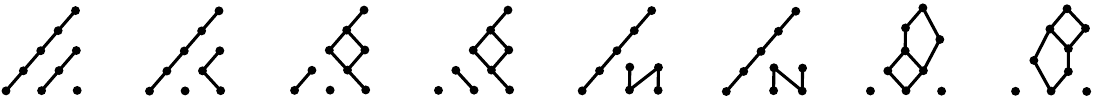}
	\caption[Caption]{\centering Posets of irreducibles of the $8$ middle orders on the Coxeter group $B_3$. The first $4$ are isomorphic to minuscule middle orders on Weyl groups $B_3$ or $C_3$\footnotemark, and the last $2$ are non-minuscule sorting orders (\emph{cf.} Section \ref{sec:sorting}). }
	\label{fig:typeb}
\end{figure}

\footnotetext{Note that only one of the two first is a minuscule middle order on the Weyl group $B_3$ \emph{s.s.}, the other one being obtained by quotienting $B_3$ by $W_J=B_2$ and multiplying by the minuscule middle order on $W_J$ seen as $C_2$. The third and fourth are minuscule middle orders on $C_3$.}

\subsection{Packing in minuscule middle orders}
\label{subsec:minuspacking}
Similarly to the case of the symmetric group (see Proposition~\ref{prop:packing}), the bijection between $W$ and $(W,\le_{s_{i_1},\dots, s_{i_r}})$ and $\IdL(\RPos_{s_{i_1},\dots, s_{i_r}})$ can be seen as packing of inversion sets. 

\begin{defi}
   For all $s\in S$ and $A\subset \Phi_+$, let $\pack_i(A)$ be the subset of $\Phi_+$ obtained from $A$ by removing $\alpha_{s_i}$ if it belongs to $A$, replacing elements $\alpha$ by $\alpha-\alpha_{s_i}$ if $\alpha- \alpha_{s_i}\in \Phi^+\setminus A$, and repeating this process until no such element exists. We write $\pack_{i_1\cdots i_r}(A):=\pack_{i_r}\circ \dots \circ \pack_{i_1}(A)$ where $i_1\dots i_r$ is any word in the generators in $S$.
\end{defi}

See \cref{fig:packingsc4} for an example. By definition $\pack_s(A)$ is the only subset of $\Phi_+$ which does not contain $\alpha_s$ and such that for all $\alpha\in \Phi_+$ with $\alpha\neq \alpha_s$, $A$ and $\pack_s(A)$ contain the same number of roots equal to $\alpha$ modulo $\alpha_s$.

\begin{prop}
\label{prop:packing2}
For all $w\in W$, we have
$$\pack_i(N(w)) = \begin{cases}
N(s_iw) \text{ if $s_i\in S$ is a left descent of $w$ (\emph{i.e.}, $\ell(s_iw)<\ell(w)$);}\\
N(w) \text{ otherwise.}
\end{cases}$$
\end{prop}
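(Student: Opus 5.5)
The plan is to compute $N(s_iw)$ directly from $N(w)$ using the standard formula for left multiplication by a simple reflection, and then check that the packing procedure reproduces exactly this.

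\textbf{Step 1: the formula for $N(s_iw)$.} Since $N(w)=\Phi^+\cap w(\Phi^-)$, we have
$$N(s_iw)=\Phi^+\cap s_i w(\Phi^-)=\Phi^+\cap s_i\bigl(N(w)\sqcup (w(\Phi^-)\cap\Phi^-)\bigr).$$
Recall that $s_i$ permutes $\Phi^+\setminus\{\alpha_{s_i}\}$ and sends $\alpha_{s_i}$ to $-\alpha_{s_i}$. Then we split by descents.
\begin{itemize}
\item If $s_i$ is a left descent of $w$, i.e. $\alpha_{s_i}\in N(w)$, this gives $N(s_iw)=s_i\bigl(N(w)\setminus\{\alpha_{s_i}\}\bigr)$.
\item Otherwise $N(s_iw)=\{\alpha_{s_i}\}\sqcup s_i(N(w))$.
\end{itemize}
The second case has larger length, so it is not the packed one. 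In that case we must only show $\pack_i(N(w))=N(w)$. Here $\alpha_{s_i}\notin N(w)$, so the first removal does nothing.

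\textbf{Step 2: reduce to root strings.} The key structural input is that $\Phi^+\setminus\{\alpha_{s_i}\}$ decomposes into $\alpha_{s_i}$-strings $\{\beta,\beta+\alpha_{s_i},\dots,\beta+p\alpha_{s_i}\}$. Each string is stable under $s_i$, which reverses it. The packing operation acts independently on each string: it slides the elements of $A$ in that string down to the bottom of the string, keeping their number. This is exactly the characterization stated just before the proposition.

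Since $s_i$ reverses each string, $s_i(N(w)\setminus\{\alpha_{s_i}\})$ has the same number of elements as $N(w)$ in each string. The descent case therefore reduces to one claim: on each string, $N(s_iw)$ is an initial segment (a bottom segment), given $\alpha_{s_i}\notin N(s_iw)$. Equivalently, for $u=s_iw$ with $\alpha_{s_i}\notin N(u)$, we need that $\gamma+\alpha_{s_i}\in N(u)$ implies $\gamma\in N(u)$ whenever $\gamma\in\Phi^+$. For the non-descent case, we need that $N(w)$ is already bottom-packed on each string whenever $\alpha_{s_i}\notin N(w)$, so packing leaves it unchanged. Both are the same claim applied to an element without left descent $s_i$.

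\textbf{Step 3: the closure argument, which is the main obstacle.} The claim follows from coclosedness of $N(u)$. Suppose $\gamma+\alpha_{s_i}\in N(u)$ but $\gamma\notin N(u)$. We also have $\alpha_{s_i}\notin N(u)$. Then $\gamma$ and $\alpha_{s_i}$ both lie in $\Phi^+\setminus N(u)$, which is closed, so their sum $\gamma+\alpha_{s_i}$ lies outside $N(u)$, a contradiction.

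One subtlety arises for strings of length greater than $2$ in non-simply-laced types, for example $G_2$ or $B$. There the bottom-segment property must hold along the whole string. It does, by applying the argument to each consecutive pair $\beta+k\alpha_{s_i},\beta+(k+1)\alpha_{s_i}$, all of which are roots because strings are unbroken.

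We must also make sure the iterative definition of $\pack_i$ terminates in this string-wise packed set. It does, because each move lowers an element within its string into a vacant position, and the process stops exactly when every string is bottom-packed.
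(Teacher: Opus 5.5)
Your proof is correct and takes essentially the same route as the paper. In both, coclosedness of $N(w)$ rules out any packing move when $\alpha_{s_i}\notin N(w)$. In the descent case, $N(s_iw)=s_i\bigl(N(w)\setminus\{\alpha_{s_i}\}\bigr)$ has the same number of roots in each class modulo $\alpha_{s_i}$, is itself packed by coclosedness, and therefore equals $\pack_i(N(w))$. You make explicit several points the paper leaves implicit: the derivation of $N(s_iw)$, the identification of residue classes with unbroken $\alpha_{s_i}$-strings, and the termination of the iterative packing.
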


\begin{proof}
  The generator $s_i$ is a left descent of $w$ if and only if $\alpha_{s_i}\in N(w)$. Hence, if $s_i$ is not a left descent of $w$, $N(w)$ does not contain any $\alpha$ such that $\alpha-\alpha_{s_i}\in \Phi^+\setminus N(w)$ since $N(w)$ is coclosed, so $N(w)$ is invariant by $\pack_i$. 
  For all $\alpha\in \Phi_+$, if $\alpha\neq \alpha_{s_i}$  we have $s_i(\alpha)\in \Phi_+$ and $s_i(\alpha)=\alpha+k\alpha_{s_i}$ where $k\in \Z$, and $s_i(\alpha_{s_i})=-\alpha_{s_i}$. Hence, if $s_i$ is a left descent of $w$, $N(s_iw)$ does not contain $\alpha_{s_i}$ and for all $\alpha\in \Phi_+$ with $\alpha\neq \alpha_{s_i}$, $N(w)$ and $N(s_iw)$ contain the same number of roots equal to $\alpha$ modulo $\alpha_{s_i}$. $N(s_iw)$ does not contain any $\alpha$ such that $\alpha-\alpha_{s_i}\in \Phi^+\setminus N(s_iw)$, so $N(s_iw)=\pack_i(N(w))$.
\end{proof}
It follows from Proposition~\ref{prop:packing2} that for all $w\in W$, any reduced word $i_1\dots i_r$ for $w$ gives the same value of $\pack_{i_1\dots i_r}(N(w))$. More generally, if $i_1\dots i_r$ is a word (not necessarily reduced) and $j_1\dots j_t$ is a reduced word for $w$, then $\pack_{i_1\dots i_r}(N(w))$ is the element of $W$ corresponding to the evaluation of $i_r\dots i_1 j_1\dots j_t$ in the Coxeter monoid (\emph{i.e.}, the monoid obtained by letting $s^2=s$ instead of $s^2=1$ for all $s\in S$, see~\cite{Tsar90} for more details). 
\begin{cor}
       Let $J\subset S$ and $i_1\cdots i_r$ a reduced word for the maximal element of $W_J$. For all $w\in W$, we have $N(\, ^Jw)=\pack_{i_1\cdots i_r}(N(w))$, where $^Jw\in \, ^JW$ is the minimal element of $W_Jw$.
\end{cor}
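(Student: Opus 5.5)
We apply Proposition~\ref{prop:packing2} repeatedly, reading the word $i_1\cdots i_r$ letter by letter. Write $w=w_J\cdot\,^Jw$ with $w_J\in W_J$ and $^Jw\in\,^JW$. Let $w_0^J$ be the longest element of $W_J$, with reduced word $i_1\cdots i_r$, so $w_0^J=s_{i_1}\cdots s_{i_r}$. By the remark following Proposition~\ref{prop:packing2}, $\pack_{i_1\cdots i_r}(N(w))$ is the inversion set of the element $i_r\cdots i_1\, j_1\cdots j_t$ evaluated in the Coxeter monoid, where $j_1\cdots j_t$ is a reduced word for $w$. Equivalently, and more concretely, we define $u_0=w$ and $u_k=s_{i_k}u_{k-1}$ if $s_{i_k}$ is a left descent of $u_{k-1}$, and $u_k=u_{k-1}$ otherwise. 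Proposition~\ref{prop:packing2} then gives $\pack_{i_1\cdots i_k}(N(w))=N(u_k)$ by induction on $k$. It therefore suffices to show $u_r=\,^Jw$.

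First, every $u_k$ lies in the coset $W_Jw$, since we only multiply on the left by generators in $J$. Second, we prove by induction on $k$ that the $W_J$-part of $u_k$ is
\[
(u_k)_J=\bigl(s_{i_k}\cdots s_{i_1}\bigr)\star w_J ,
\]
where $\star$ denotes the Demazure-type "descending" product: left multiplication by $s$ when it decreases length, and doing nothing otherwise. This works because, by the uniqueness of the factorization $w_J\cdot\,^Jw$ with lengths adding, $s\in J$ is a left descent of $u$ if and only if it is a left descent of $u_J$. Hence $(u_k)_J$ is obtained from $w_J$ by the descending action of the word $i_1\cdots i_k$ inside $W_J$.

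The remaining step, which I expect to be the main point, is to show that the descending action of a reduced word of $w_0^J$ sends every $x\in W_J$ to the identity. I would argue via the Coxeter monoid: the descending action of $s_{i_1},\dots,s_{i_r}$ on $x$ corresponds to the monoid product of $x^{-1}$ with $s_{i_1}\cdots s_{i_r}$, after inverting. In the $0$-Hecke monoid, the product of anything with the longest element $w_0^J$ equals $w_0^J$. Translating back, the descending result is $x'$ with $\ell(x')=\ell(x\cdot\ldots)$ forced to be $0$.

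A more hands-on alternative is to track lengths. Going down along the letters of $w_0^J$ from $x$ amounts to taking the meet in left weak order of $x$ with $w_0^J(s_{i_1}\cdots s_{i_k})^{-1}$-type elements. At the end, $(u_r)_J\le_L (w_0^J)^{-1}w_0^J=e$, so $(u_r)_J=e$.

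Either way we get $u_r=\,^Jw$, and hence $N(\,^Jw)=\pack_{i_1\cdots i_r}(N(w))$.
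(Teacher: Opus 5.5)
Your route is the paper's own. Its proof simply notes that ${}^Jw$ is reached from $w$ by multiplying on the left by $s_{i_1},\dots,s_{i_r}$ in turn whenever this lowers the length, and combines this with Proposition~\ref{prop:packing2}. Your recursion $u_k$, the identity $\pack_{i_1\cdots i_k}(N(w))=N(u_k)$, and the reduction to $W_J$ via the length-additive factorization $u=u_J\cdot{}^Ju$ are all correct. (That reduction gives: $s\in J$ is a left descent of $u$ iff it is one of $u_J$, and ${}^Ju_k={}^Jw$ for all $k$.) This part is more explicit than the paper. Do not lean on the ``equivalently'' with the Coxeter monoid, however. There $s\cdot w=w$ whenever $s$ is a left descent of $w$ (e.g.\ $s\cdot s=s$, whereas $\pack_s(N(s))=\emptyset$). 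So evaluating $i_r\cdots i_1j_1\cdots j_t$ computes the ascending product, not $u_r$; only your concrete recursion is right.

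The same ascending/descending confusion causes a genuine gap in the step you single out. That step says the descending action ($s\triangleright x=sx$ if $\ell(sx)<\ell(x)$, $s\triangleright x=x$ otherwise; your $\star$), applied along a reduced word of $w_0^J$, sends every $x\in W_J$ to $e$. The claim is true, but your argument does not prove it.

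Write $\ast$ for the Coxeter-monoid product: $s\ast y=sy$ if $\ell(sy)>\ell(y)$ and $s\ast y=y$ otherwise, symmetrically on the right. Inversion does not turn $\triangleright$ into $\ast$: $(x^{-1}\ast s)^{-1}$ equals $sx$ exactly when $\ell(sx)>\ell(x)$, which is the ascending action. So your translation, carried out literally, gives $(x^{-1}\ast w_0^J)^{-1}=w_0^J$, not $e$.

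The correct dictionary is right multiplication by $w_0^J$, which reverses lengths in $W_J$. For $s\in J$ one has $(s\triangleright x)\,w_0^J=s\ast(x\,w_0^J)$. Since $i_r\cdots i_1$ is again a reduced word of $w_0^J$, the final element $x_r$ satisfies $x_r\,w_0^J=w_0^J\ast(x\,w_0^J)=w_0^J$, i.e.\ $x_r=e$.

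Your ``hands-on'' version also fails as stated, because $x_k$ is not a weak-order meet. Take $W_J$ of type $A_2$, the word $121$ and $x=s_2$. Then $x_2=e$, while $x\wedge_L w_0^J(s_1s_2)^{-1}=s_2\wedge_L s_2=s_2$. What does propagate is an upper bound. Set $y_k=s_{i_{k+1}}\cdots s_{i_r}$; then $x_k\le y_k$ in Bruhat order for all $k$. The induction step holds because $s_{i_k}$ is a left descent of $y_{k-1}=s_{i_k}y_k$, and the lifting property (with Deodhar's $Z$-property when $s$ is also a left descent of $x$) gives $\min(x,sx)\le sy$ whenever $x\le y$ and $s$ is a left descent of $y$. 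At $k=r$ this forces $x_r=e$. With either repair your proof is complete.
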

\begin{proof}
   To compute the minimal coset representative $^Jw\in \, ^JW$ corresponding to $w\in W$, we successively multiply $w$ on the left by $s_{i_k}$ for all $1\le k \le r$ whenever it reduces its length.
\end{proof}

\begin{figure}
\centering
\includegraphics[width=0.9\linewidth]{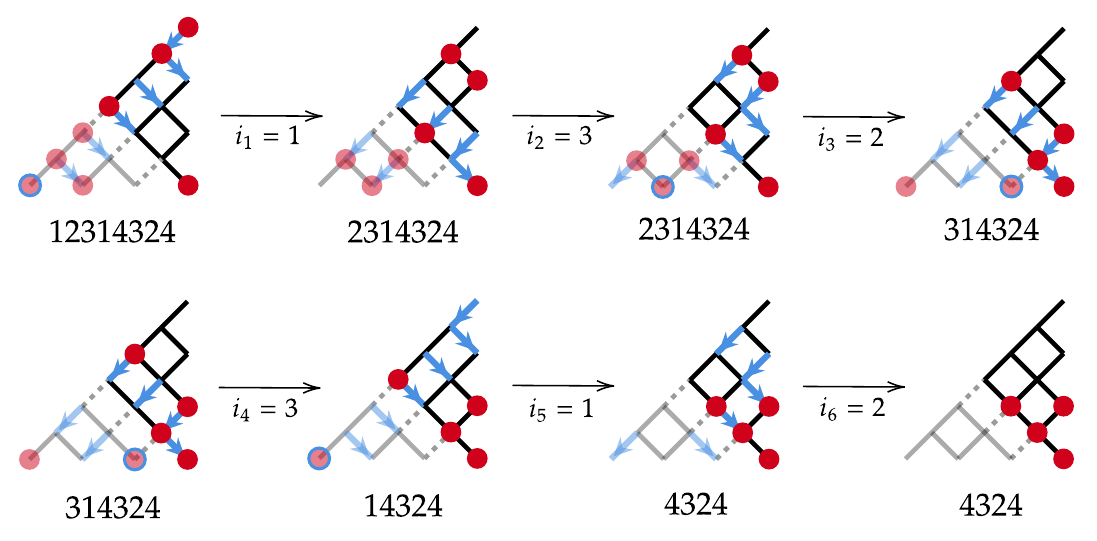}
\caption{The sequence of packings corresponding to $132312$ (a reduced word for the longest element of $W_J$), going from $w=12314324$ to $^Jw=4324$, with $W=C_4$ and $J=\{s_1,s_2,s_3\}$.}
\label{fig:packingsc4}
\end{figure}

\subsection{Properties of minuscule middle orders}

Since minuscule posets are self-dual, minuscule middle orders are self-dual.
\begin{prop}
   All minuscule middle orders on a Weyl group $W$ are self-dual, with an antiautomorphism given by $w\mapsto ww_0$, where $w_0$ is the maximal element of $W$.
\end{prop}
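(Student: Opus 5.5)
The plan is to induct on $r=|S|$, using the product decomposition in the definition of minuscule middle orders. The basic fact is that right multiplication by $w_0$ complements inversion sets: $N(ww_0)=\Phi^+\cap w(\Phi^+)=\Phi^+\setminus N(w)$. It therefore reverses both the right weak order and the Bruhat order on $W$. This alone does not settle the middle order, however. I instead check that $w\mapsto ww_0$ is compatible with the factorization $w=w_J\cdot{}^Jw$, with $J=S\setminus\{s_{i_1}\}$, and that it acts on each factor as an antiautomorphism.

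\emph{Step 1 (compatibility with the factorization).} Let $w_{0,J}$ be the longest element of $W_J$, and write $u={}^Jw$. I use the standard fact (see \cite{BjBr05}) that $u\mapsto w_{0,J}\,u\,w_0$ is a bijection of ${}^JW$ onto itself, with $\ell(w_{0,J}uw_0)=\ell(w_0)-\ell(w_{0,J})-\ell(u)$. Then
$$ww_0=w_J\,u\,w_0=(w_Jw_{0,J})\cdot(w_{0,J}\,u\,w_0),$$
with $w_Jw_{0,J}\in W_J$ and $w_{0,J}uw_0\in{}^JW$. By uniqueness of the factorization, the map $w\mapsto ww_0$ becomes, in the coordinates $W_J\times{}^JW$, the map
$$(x,u)\longmapsto(xw_{0,J},\ w_{0,J}uw_0).$$

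\emph{Step 2 (each factor is reversed).} On the first factor, $x\mapsto xw_{0,J}$ is the analogous map for the Weyl group $W_J$. The order on $W_J$ is the minuscule middle order $\le_{s_{i_2},\dots,s_{i_r}}$, so by the induction hypothesis this map is an antiautomorphism. The base case $r=0$ is trivial. On the second factor the order is the Bruhat order on ${}^JW$, which coincides with the weak order. Take $u\le v$ in ${}^JW$. Then $uw_0\ge vw_0$ in $W$. The elements $uw_0$ and $vw_0$ are the maximal representatives of their $W_J$-cosets. The projection $W\to{}^JW$, $y\mapsto{}^Jy$, is order preserving for the Bruhat order \cite{BjBr05}, so $w_{0,J}uw_0\ge w_{0,J}vw_0$. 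This map is a bijective involution, so the same argument applied to it gives the converse, and it is an antiautomorphism of $({}^JW,\le)$.

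\emph{Step 3.} A product of antiautomorphisms is an antiautomorphism of the product order. Hence $w\mapsto ww_0$ is an antiautomorphism of $(W,\le_{s_{i_1},\dots,s_{i_r}})=(W_J,\le_{s_{i_2},\dots,s_{i_r}})\times{}^JW$, and in particular this lattice is self-dual. This is consistent with the remark that minuscule posets are self-dual. Under the isomorphism with $\IdL(\RPos_{s_{i_1},\dots,s_{i_r}})$, the map should correspond to taking complements via a self-duality of each minuscule component. The main obstacle is Step 1: one must verify carefully that $w_{0,J}uw_0$ really lies in ${}^JW$, which follows from the length identity above, and that the Bruhat order on minimal coset representatives is reversed rather than merely mapped. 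I would rely on the standard facts about parabolic projections rather than reprove them.
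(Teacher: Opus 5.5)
Your proof is correct and follows the same route as the paper. The paper also decomposes $w\mapsto ww_0$ along $w=w_J\cdot{}^Jw$ as $(x,u)\mapsto(xw_{0,J},\,w_{0,J}uw_0)$, written there as $x\mapsto x\,{}^Jw_0w_0$ and $u\mapsto w_0({}^Jw_0)^{-1}uw_0$, and inducts on the factor $W_J$. Your Step 2 goes further: it justifies, via monotonicity of the Bruhat projection $y\mapsto{}^Jy$, that the map on ${}^JW$ is an antiautomorphism, which the paper only asserts.
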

\begin{proof}
    When $s$ is minuscule and $J=\{s\}$, $^JW$ has an antiautomorphism given by $w\mapsto w_0 (^Jw_0)^{-1} ww_0$, where $^Jw_0$ is the longest element of $^JW$. Assuming that $W_J$ has an antiautomorphism given by $w\mapsto w \ \! ^Jw_0 w_0$ ($^Jw_0 w_0$ being the longest element of $W_J$), we obtain that $W$ has an antiautomorphism given by $w=w_J \cdot \! ^Jw\mapsto w_J \ \! ^Jw_0 w_0 \cdot w_0 (^Jw_0)^{-1} \ ^J \! w w_0=ww_0$.
\end{proof}

Nonnesting partitions associated to a Weyl group $W$ with root poset $\Phi^+$ are defined as the antichains of $\Phi_+$, or equivalently as its lower sets. Nonnesting partitions are counted by a generalization of Catalan numbers~\cite[Section 2.7]{Arms09}:
% TO DO ref nombre de Catalan
$$\Cat(W)=\prod_{k=1}^{n-1}\frac{h+d_k}{d_k}$$
where the $(d_k)_{k=1}^n$ are the degrees of $W$ and $h=d_n$ is its Coxeter number.

\begin{prop}
\label{pr:NNsublattice}
    Let $W$ be a Weyl group with root poset $\Phi^+$. Any minuscule middle order on $W$ has a sublattice isomorphic to the lattice of nonnesting partitions $\IdL(\Phi^+)$.
\end{prop}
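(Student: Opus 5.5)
The plan is to apply Birkhoff's theorem to the isomorphism $(W,\le_{s_{i_1},\dots,s_{i_r}})\cong \IdL(\RPos_{s_{i_1},\dots,s_{i_r}})$ stated above. We will show that $\RPos:=\RPos_{s_{i_1},\dots,s_{i_r}}$ has the same underlying set as $\Phi^+$, and that its order is coarser than the root order, meaning that every relation of $\RPos$ is a relation of $\Phi^+$. Then every lower set of $\Phi^+$ is a lower set of $\RPos$. Since unions and intersections of lower sets of $\Phi^+$ are again lower sets of $\Phi^+$, the family $\IdL(\Phi^+)$ is a subset of $\IdL(\RPos)$ closed under the join (union) and meet (intersection) of $\IdL(\RPos)$. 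It is therefore a sublattice, isomorphic to the lattice of nonnesting partitions. Its minimum $\emptyset$ and maximum $\Phi^+$ coincide with those of $\IdL(\RPos)$, consistent with the remark in Section~\ref{sec:distri} that $\RPos$ is obtained from $\Phi^+$ by deleting relations.

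\textbf{Step 1: $\RPos$ partitions $\Phi^+$.} I will argue by induction along the recursive definition. Write $s=s_{i_1}$ and $J=S\setminus\{s\}$. The positive roots of the parabolic subsystem, $\Phi_J^+$, are exactly the positive roots whose simple-root expansion has zero coefficient on $\alpha_s$. The remaining positive roots have positive $\alpha_s$-coefficient. I claim these remaining roots are exactly the upper set $\Pos_s$ of $\alpha_s$ in $\Phi^+$. One inclusion is clear: anything above $\alpha_s$ has positive $\alpha_s$-coefficient. For the converse, every positive root containing $\alpha_s$ lies in the irreducible component of $\alpha_s$, and it can be reached from $\alpha_s$ by successively adding simple roots while staying in $\Phi^+$, so it lies above $\alpha_s$. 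If $W$ is reducible, I work inside the component containing $s$, where $\alpha_s$ is minuscule. Hence $\Phi^+=\Phi_J^+\sqcup\Pos_s$. By induction, $\RPos_{s_{i_2},\dots,s_{i_r}}$ partitions $\Phi_J^+$, since $W_J$ is itself a Weyl group with the remaining sequence satisfying the minuscule hypothesis. This also matches the fact that the minuscule posets in this construction are upper sets taken in the root posets of the successive parabolic subsystems.

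\textbf{Step 2: the order on each block is induced from $\Phi^+$.} Each block is a minuscule poset $\Pos_{s_{i_k}}$ carrying the order of the root poset of $W_{\{s_{i_k},\dots,s_{i_r}\}}$. I need that $\alpha\le\beta$ in a parabolic root subsystem implies $\alpha\le\beta$ in $\Phi^+$. This is immediate: $\beta-\alpha$ is a nonnegative combination of simple roots of the subsystem, and these are simple roots of $\Phi$. The reverse implication also holds, since $\Phi_K^+$ is the set of roots supported on $K$, but only the forward direction is needed. Consequently, relations in $\RPos$ exist only between elements of the same block, and each such relation holds in $\Phi^+$. This gives the refinement claim and finishes the argument.

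The main obstacle is Step 1, specifically identifying the roots containing $\alpha_s$ with the principal upper set generated by $\alpha_s$. This needs the standard fact that every positive root is obtained from a simple root by adding simple roots one at a time. I would combine it with the observation that, in such a chain, the simple root $\alpha_s$ can be taken to be the first one added. To justify this, I would argue inside the irreducible component of $s$, where the support of any root is connected, and start the chain at $\alpha_s$.
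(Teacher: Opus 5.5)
Your proof is correct and follows the paper's argument: the paper simply observes that $\RPos_{s_{i_1},\dots,s_{i_r}}$ is a coarsening of $\Phi^+$, so lower sets of $\Phi^+$ are lower sets of $\RPos$ with the same union and intersection, and your Steps 1 and 2 supply the verification the paper leaves implicit. The step you flag as the main obstacle is actually immediate from the paper's definition of the root order, and needs no chain argument or connectivity of supports: if $\beta\in\Phi^+$ has positive $\alpha_s$-coefficient, then $\beta-\alpha_s$ is a nonnegative combination of simple roots, so $\beta\ge\alpha_s$ directly.
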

\begin{proof}
   A minuscule middle order is isomorphic to the lattice of lower sets of $\RPos_{s_{i_1},\dots, s_{i_r}}$, which is a coarsening of the root poset. Hence, lower sets of $\Phi^+$ are lower sets of $\RPos$, with the same meet and join operations as the corresponding middle order.
\end{proof}

Let $\Pos$ be a finite poset, we write $F_m(\Pos,q)$ the generating function for $m$-multichains of lower sets of $\Pos$, in which the coefficient of $q^k$ is the number of elements of rank $k$ in $\IdL(\Pos\times [m])$ where $[m]$ is a chain with $m$ elements.

\begin{prop}
	Let $W$ be a Weyl group with root poset $\Phi_+$. Let $\L$ be a minuscule middle order on $W$ with poset of irreducibles $\RPos$, we have
	$$ F_m(\RPos,q)=\prod_{\alpha\in \Phi^+}\frac{1-q^{m+r_\alpha}}{1-q^{r_\alpha}},$$
	where $r_\alpha-1$ is the rank of $\alpha$ in the root poset, and the number of self-dual lower sets of $\RPos\times [m]$ is equal to $F_m(\RPos,-1)$.
	\label{pr:Fpoly}
\end{prop}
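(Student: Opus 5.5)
Since $\RPos=\RPos_{s_{i_1},\dots,s_{i_r}}$ is a disjoint union of minuscule posets, and $\IdL(\Pos\sqcup\mathcal Q)\cong\IdL(\Pos)\times\IdL(\mathcal Q)$, we have $(\Pos\sqcup\mathcal Q)\times[m]=(\Pos\times[m])\sqcup(\mathcal Q\times[m])$. Hence $F_m$ is multiplicative over disjoint unions:
$$F_m(\Pos\sqcup\mathcal Q,q)=F_m(\Pos,q)F_m(\mathcal Q,q).$$
The self-dual lower sets of a disjoint union of self-dual posets are exactly the unions of self-dual lower sets of the pieces. So the count of self-dual lower sets is multiplicative as well. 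By induction on $r$ it therefore suffices to treat a single minuscule poset $\Pos_s$, the upper set of the minuscule simple root $\alpha_s$ in the root poset of $W'=W_{\{s_{i_k},\dots,s_{i_r}\}}$.

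The main point is to match the product over $\Phi^+$ with the pieces. For this I want to show that the multiset $\{r_\alpha\}_{\alpha\in\Phi^+}$ splits as the disjoint union, over the pieces, of the multisets of ranks computed inside each piece's own root system. The ranks inside a piece are the ranks in the root poset of $W'$, where $\alpha_s$ has rank $1$. I will handle this with heights. The rank of $\alpha$ in a root poset is $\mathrm{ht}(\alpha)-1$, so $r_\alpha=\mathrm{ht}(\alpha)$. Heights do not change when we pass to the subsystem $\Phi_{W'}\subset\Phi$, since simple roots are shared. Each $\alpha\in\Phi^+$ lies in exactly one piece, namely $\Pos_{s_{i_k}}$ for the first $k$ with $\alpha\in\Phi^+_{W_{\{s_{i_k},\dots\}}}$ and $\alpha\notin\Phi^+_{W_{\{s_{i_{k+1}},\dots\}}}$, i.e.\ the roots whose $\alpha_{s_{i_k}}$-coefficient is $1$ by minuscularity. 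So the product over $\Phi^+$ factors as the product over pieces of $\prod_{\alpha\in\Pos_s}\frac{1-q^{m+\mathrm{ht}\alpha}}{1-q^{\mathrm{ht}\alpha}}$.

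It remains to prove the formula for a single minuscule poset $P=\Pos_s$ with rank function $\mathrm{ht}$:
$$\sum_{L\in\IdL(P\times[m])}q^{|L|}=\prod_{\alpha\in P}\frac{1-q^{m+\mathrm{ht}\alpha}}{1-q^{\mathrm{ht}\alpha}}.$$
This is Proctor's theorem that minuscule posets (and their $d$-complete generalizations) satisfy the hook-length/Gaussian property. For minuscule posets the hook length of $\alpha$ is exactly $\mathrm{ht}(\alpha)$. For the $q=-1$ statement I would cite Stembridge's $q=-1$ phenomenon for minuscule posets, which says that the count of self-complementary lower sets of $P\times[m]$ is $F_m(P,-1)$. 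In type $A$ these are rectangles $a\times b$: the formula becomes MacMahon's box formula and the self-complementary count is Stembridge's result already quoted above. This is consistent with Propositions~\ref{pr:1} and~\ref{pr:bool}.

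The main obstacle is this single-piece identity in the non-type-$A$ minuscule cases: the double-tailed diamonds in types $B/C$, the $D_n$ spin and vector posets, and the $E_6$, $E_7$ posets. I would not reprove it but invoke Proctor (Gaussian posets) and Stembridge (the $q=-1$ phenomenon), checking that their hook or content statistic agrees with the height. Care is also needed that self-duality of the whole $\RPos$ is taken piecewise: the antiautomorphism respects the decomposition by the preceding proposition, so each piece is mapped to itself.
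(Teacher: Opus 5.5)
Your proof is correct and follows the paper's argument: use multiplicativity of $F_m$ (and of the self-dual lower-set count) over the disjoint union of minuscule posets, then cite Proctor's product formula for each piece and Stembridge's Theorem 4.1 for the $q=-1$ count. Your check that $r_\alpha=\mathrm{ht}(\alpha)$ is unchanged when passing to the parabolic subsystem, and your remark that the antiautomorphism acts piece by piece, make explicit two steps the paper leaves implicit.
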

\begin{proof}
	Let $\Pos_\lambda$ be a minuscule poset, we have~\cite[Theorem 6]{Pro84}
	%	$$F_m(\Pos_\lambda,q)=\prod_{\alpha\in \Phi^+}\frac{1-q^{\langle m\lambda+\rho,\alpha^\vee\rangle}}{1-q^{\langle\rho,\alpha^\vee\rangle}}$$
	%	where $\alpha^\vee=2\alpha/\langle \alpha,\alpha\rangle$ and $2\rho=\sum_{\alpha\in \Phi^+}\alpha$. $\langle\rho,\alpha^\vee\rangle-1$ is the rank of $\alpha$ in the root poset, and $\langle\lambda,\alpha^\vee\rangle$ is equal to $1$ if $\alpha\in \Pos_\lambda$ and $0$ if not. Hence, we have
	$$F_m(\Pos_\lambda,q)=\prod_{\alpha\in P_\lambda}\frac{1-q^{m+r_\alpha}}{1-q^{r_\alpha}}.$$
	Since $\RPos$ is the disjoint union of minuscule posets, and since the number of self-dual lower sets of $\Pos_\lambda\times[m]$ is $F_m(\Pos_\lambda,-1)$ (\emph{cf.} \cite[Theorem 4.1]{Stem94}), we obtain the result. 
\end{proof}
This result implies that minuscule middle orders on a given Weyl group share the same number of chains and self-dual chains of a given length (for type $A$, we recover the formulas obtained in Section \ref{sec:chains}).

\section{Middle orders and $\omega$-sorting orders}
\label{sec:sorting}
In \cite{Arm09}, Armstrong defined the $\omega$-sorting order, which we denote by $\Sort_\omega$ for $\omega$ a word in the generators of a Coxeter group $W$. It is a join-distributive lattice on $W_\omega$, \emph{i.e.} the elements of $W$ occurring as subwords of $\omega$. For $w\in W_\omega$, $\sort_\omega(w)$ is the index set of the lexicographically minimal subword of $\omega$ which evaluates to $w$, and $w_1\le w_2$ in $\Sort_\omega$ if $\sort_\omega(w_1)\subset \sort_\omega(w_2)$.
Hence, if $\omega$ contains $w_0$ as a subword, the $\omega$-sorting order is a lattice on the whole group. We will only consider the case where $\omega$ is a reduced word for the maximal element of the group (or of a quotient, \emph{cf.} Definition \ref{def:qsort}). These lattices are refinements of the weak order and coarsenings of the Bruhat order. Two sorting orders induced respectively by reduced words $\omega$ and $\omega'$ are equal if and only if $\omega$ and $\omega'$ are equal up to commutation. 

For any minuscule root $s$ of a Weyl group $(W,S)$, there exists a natural labeling of the minuscule poset $\Pos_s$ by generators in $S$, which is called a \defn{minuscule heap} (see~\cite{RuSh12, Stem96}). For any linear extension $x_1,...,x_t$ of $\Pos_s$ with corresponding labels $s_{k_1},...,s_{k_t}$, the map $\phi:\IdL(\Pos_s)\to \,^JW$ (where $J=S\setminus \{s\}$) that sends a lower set $I$ to the product $s_{k_1}\cdots s_{k_t}$ restricted to labels of elements of $I$ is an isomorphism. Each element of $^JW$ is fully commutative, \emph{i.e.}, all its reduced words are the same up to commutation.
Some of the $\omega$-sorting orders are distributive, and when $W$ is a Weyl group, we show that a minuscule middle order can be constructed as $\omega$-sorting orders where $\omega$ is a product of fully commutative elements. See Figure \ref{fig:minusculeheap} for an example.

\begin{prop}
\label{pr:minussorting}
   Let $(W,S)$ be a Weyl group, and $\L$ be a minuscule middle order on $W$ induced by a permutation $s_{i_1},\dots, s_{i_r}$ of $S$. Let $\omega= \omega_r\omega_{r-1}\dots \omega_2\omega_1$ where for all $1\le k\le r$, $\omega_i$ is a reduced word for the minimal element of the topmost coset in $W_{\{s_{i_{k+1}},...,s_{i_r}\}}\backslash W_{\{s_{i_k},...,s_{i_r}\}}$. Then $\L$ is equal to $\Sort_\omega$.
\end{prop}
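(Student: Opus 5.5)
We argue by induction on $r=|S|$, peeling off the first minuscule quotient. Set $J=S\setminus\{s_{i_1}\}$, so that $\L=(W_J,\le_{s_{i_2},\dots,s_{i_r}})\times{}^JW$ and $\omega=\omega'\,\omega_1$ with $\omega'=\omega_r\cdots\omega_2$. By the induction hypothesis applied to $W_J$ (with $W_{\{s_{i_k},\dots,s_{i_r}\}}$ playing the same role inside $W_J$), $\omega'$ is a reduced word for the longest element of $W_J$ and $\Sort_{\omega'}$ on $W_J$ equals $(W_J,\le_{s_{i_2},\dots,s_{i_r}})$. The word $\omega_1$ is a reduced word for the longest element ${}^Jw_0$ of ${}^JW$. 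Since $w_0=(w_0)_J\cdot{}^Jw_0$ with lengths adding, $\omega$ is a reduced word for $w_0$, so $\Sort_\omega$ is a lattice on all of $W$.

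The key step is to show that, for $w=w_J\cdot{}^Jw$, the lexicographically minimal subword of $\omega$ evaluating to $w$ splits as (the sorting subword of $w_J$ in $\omega'$) followed by (the sorting subword of ${}^Jw$ in $\omega_1$). This gives
\[
\sort_\omega(w)=\sort_{\omega'}(w_J)\sqcup\bigl(|\omega'|+\sort_{\omega_1}({}^Jw)\bigr).
\]
The plan for this step is as follows. Any subword of $\omega$ evaluating to $w$ splits as $u\,v$ with $u\le w_{0,J}$ a subword of $\omega'$, hence $u\in W_J$, and $v$ a subword of $\omega_1$. Since ${}^Jw_0$ is fully commutative and ${}^JW$ is a Bruhat lower interval-like set (the minuscule heap structure, with subwords of $\omega_1$ realizing exactly elements whose heaps are order ideals), I must rule out that $v$ evaluates to something outside ${}^JW$. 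Taking reduced subwords only, a reduced subword of a reduced word for a fully commutative element is fully commutative. I expect to use the minuscule property that every reduced subword $v$ of $\omega_1$ is of the form $x\cdot y$ with $x\in W_J$ and $y\in{}^JW$, together with lexicographic minimality, to push the $W_J$-part into $\omega'$. Lexicographic minimality prefers earlier indices, so it takes the greedy choice in $\omega'$ first. Thus $u$ must be the largest possible $W_J$-contribution, forcing $u=w_J$ and $v$ to evaluate to ${}^Jw$. This forcing argument is the main obstacle. I would try to prove it via Armstrong's characterization of sorting words (greedy leftmost letter that decreases length, i.e.\ applying left descents greedily), combined with the fact that $W_J$ is generated by letters whose greedy use happens entirely inside $\omega'$ when $\omega'$ contains a reduced word of $w_{0,J}$.

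Once the splitting holds, it remains to identify the two factors. For the second factor, the subwords of $\omega_1$ evaluating to elements of ${}^JW$ are indexed by lower sets of the minuscule heap $\Pos_{s_{i_1}}$, via the isomorphism $\phi$ recalled before the proposition. Hence $\sort_{\omega_1}$ realizes ${}^JW\cong\IdL(\Pos_{s_{i_1}})$ with inclusion order, which is the Bruhat (equivalently weak) order on the maximal minuscule quotient. Inclusion of disjoint unions is the product order, so $\Sort_\omega\cong\Sort_{\omega'}\times\Sort_{\omega_1}$ via $w\mapsto(w_J,{}^Jw)$. By induction this equals $(W_J,\le_{s_{i_2},\dots,s_{i_r}})\times{}^JW=\L$. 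The base case $r=0$ is trivial.
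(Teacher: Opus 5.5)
Your plan is the same as the paper's. You write $\omega=\omega'\omega_1$, show that the sorting word of $w=w_J\cdot{}^Jw$ splits into the sorting word of $w_J$ in $\omega'$ followed by that of ${}^Jw$ in $\omega_1$, and read the second factor off the minuscule heap. The genuine gap is the splitting step: you flag it but do not prove it, and the heuristics you offer for it do not hold.

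\begin{itemize}
\item Lexicographic minimality only compares the first index where two subwords differ. It does not mean ``use as many letters of $\omega'$ as possible,'' so ``$u$ must be the largest possible $W_J$-contribution'' is not an argument.
\item Your ``minuscule property'' $v=x\cdot y$ with $x\in W_J$, $y\in{}^JW$ is just the parabolic factorization, which holds for every element.
\item A reduced subword of a reduced word of a fully commutative element need not be fully commutative. In $A_3$ with $s=s_2$ and $\omega_1=2132$, positions $1,2,4$ give the reduced word $212$ of $s_1s_2s_1$.
\item The second factor is also misstated. Not every subword of $\omega_1$ evaluating into ${}^JW$ is a lower set of the heap: position $4$ alone gives $s_2\in{}^JW$. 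What you need is that the \emph{lexicographically minimal} reduced one is.
\end{itemize}

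Both points follow from the greedy description you allude to. You should prove it rather than cite it, since it is exactly where reducedness of $\omega$ is used.

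\emph{Greedy lemma.} Let $c_1\cdots c_N$ be a reduced word for $x$, and let $w\le x$ in Bruhat order. Scan left to right with a current element $z$, initially $w$. Keep position $j$ and replace $z$ by $c_jz$ exactly when $c_j$ is a left descent of $z$. Since $c_j$ is a left descent of $c_j\cdots c_N$, the lifting property preserves the invariant $z\le c_j\cdots c_N$ in both cases. Hence $z$ ends at $e$, and the kept positions form a reduced subword for $w$. Now let $I$ be any other reduced subword for $w$, and look at the first index where $I$ and the greedy set differ. There the next letter of $I$ is a left descent of the current $z$. So the greedy keeps a position no later than that letter, hence strictly earlier, and the greedy set is lexicographically smaller.

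\emph{Splitting.} For $s\in J$ and $z=y\cdot{}^Jw$ with $y\in W_J$, one has $\ell(sz)=\ell(sy)+\ell({}^Jw)$. So $s$ is a left descent of $z$ if and only if it is a left descent of $y$. All letters of $\omega'$ lie in $J$, so the run of $w$ through $\omega'$ is exactly the run of $w_J$ through $\omega'$. It therefore keeps $\sort_{\omega'}(w_J)$ and leaves exactly ${}^Jw$ to be sorted by $\omega_1$. This gives your displayed formula, and it holds for any $J$.

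\emph{Minuscule factor.} The same greedy picks exactly the lower set of the heap corresponding to ${}^Jw$. This is because the left descents of a fully commutative element are the labels of the minimal elements of its heap, and equally labelled heap elements are comparable. This is the precise content of the paper's remark that non-comparable heap elements commute. Alternatively, the restriction of $\Sort_{\omega_1}$ to ${}^JW$ lies between the weak and Bruhat orders of ${}^JW$, and these coincide for minuscule quotients.
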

\begin{proof}
	It follows immediately from the definition of minuscule middle orders that $\omega$ is a reduced word for $w_0$, and that each $w\in W$ has a unique factorization $w_r\cdots w_1$ with $w_r\cdots w_k\in W_J$ and $w_{k-1}\cdots w_1\in \, ^JW$ where $J=\{s_{i_k},...,s_{i_r}\}$ for all $1\le k \le r$.
	 The $\omega$ we defined is equal to the product of minuscule heaps $s_{k_1}\cdots s_{k_t}$ corresponding to each minuscule poset in $\RPos$, hence we have a bijection between lower sets of $\RPos$ and elements of $W$ seen as subwords of $\omega$. For each $w\in W$, the corresponding subword of $\omega$ is lexicographically minimal among reduced words of $w$ that are subwords of $\omega$. This comes from the fact that subwords of $s_{k_1}\cdots s_{k_t}$ corresponding to elements of $^JW$ are minimal because in a minuscule heap, labels of non-comparable elements commute, and their concatenation is also lexicographically minimal because each element of $W$ has a unique factorization $w_J\cdot \!^Jw$ with $w_J\in W_J$ and $^Jw\in \!^JW$.
\end{proof}

\begin{figure}
\centering
\includegraphics[width=0.7\linewidth]{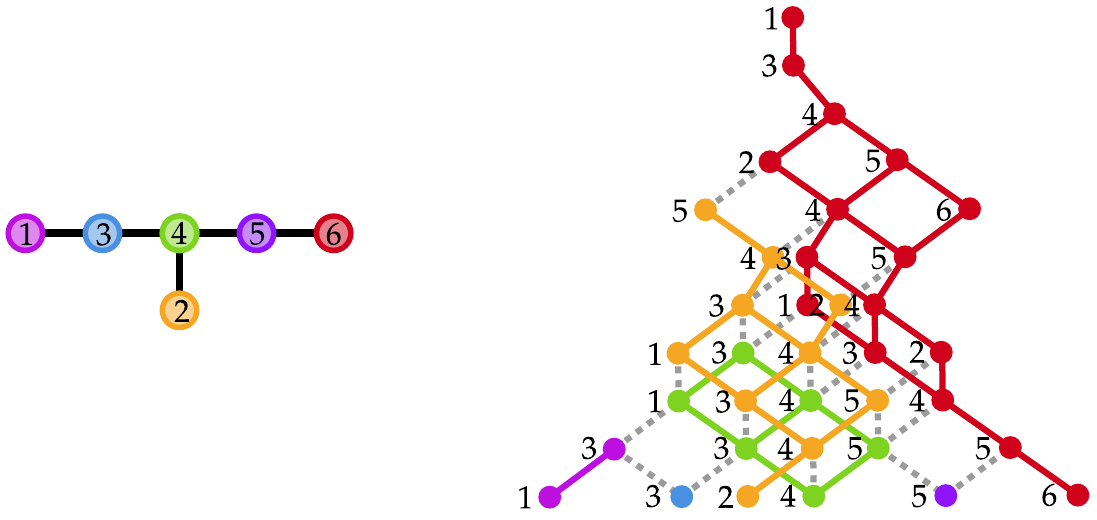}
\caption{\centering The Coxeter diagram of $E_6$, and a partition of its root poset into minuscule posets $\RPos$ induced by the permutation $s_6,s_2,s_4,s_1,s_3,s_5$, with the corresponding minuscule heaps. An example of a sorting word giving a sorting order isomorphic to $\IdL(\RPos)$ is $5|3|13|435143|2435143245|6543214354625431$.}
\label{fig:minusculeheap}
\end{figure}

\begin{rmk}
Remark that for all $w\in W$, the lower set of $\RPos$ corresponding to $w$ is the union of the restrictions of $\pack_{\omega_r\dots \omega_{k+1}}(N(w))$ to $\Pos_{s_{i_k}}$ for $1\le k\le r$ (see Section \ref{subsec:minuspacking} for the definition of $\pack$).
\end{rmk}

We now define sorting orders on parabolic quotients, in an attempt to describe all distributive sorting orders:
\begin{defi}
\label{def:qsort}
   Let $J\subset S$ and $^J\omega$ be a reduced word for $^Jw_0$. Let $\omega$ be any reduced word for $w_0$ containing $^J\omega$ as a suffix. The $^J\omega$-sorting order on $^JW$ is the restriction of the $\omega$-sorting order on $W$ to $^JW$.
\end{defi}
% TODO donner une explication
It is straightforward to show that the definition of the $^J\omega$-sorting order does not depend on the choice of $\omega$, that it is between the weak and Bruhat orders on $^JW$, and that reduced words for $^Jw_0$ that are equal up to commutation gives the same sorting orders.

When proved, the following conjecture could be useful for classifying distributive sorting orders.
%, as even when quotients of the form $^JW$ with $J=S\setminus \{s\}$ are not distributive lattices, their middle orders can be classified easily.
\begin{cnj}
   \label{cnj:distrisort}
   Let $\Sort_\omega(W)$ be a distributive sorting order on a finite Coxeter group $(W,S)$. Then there exists $s\in S$ with $J=S\setminus \{s\}$ such that $\omega$ factorizes as $\omega_J\cdot\!^J\omega$ in $W_J\times \! ^J W$ and $\Sort_\omega(W)$ is the product of distributive sorting orders $\Sort_{\omega_J}(W_J)$ and $\Sort_{^J\omega}(^JW)$.
\end{cnj}

%\begin{exmp}
%    Let $W = B_4$ and $J=\{s_1,s_3,s_4\}$ ($s_1,...,s_4$ is the usual labelling of the generators of $B_4$, where $s_3s_4$ has order $4$). $^Jw_0$ has $3$ reduced words up to commutation (generators are written as their indexes for convenience): $21343213432$, $21324321432$ and $23432123432$. The last two yields distributive sorting orders.
%\end{exmp}

If Conjecture~\ref{cnj:distrisort} were true, a classification of distributive sorting orders on maximal parabolic quotients would give a description of all distributive sorting orders. We give a conjectural characterization of those in type $B$ and $D$, as well as a list of sorting orders in $F_4$ and $H_3$ found by computer experiments.

\begin{cnj}
   \label{cnj:Btype}
Let $W=B_n$ generated by $S=\{s_1,...,s_n\}$ (with the usual labelling of the generators, where $s_{n-1}s_n$ has order $4$), and let $J=S\setminus \{s_k\}$. There are at most two distributive sorting orders on $^JW$ that are given by these reduced words (generators are written as their indexes for convenience):
\begin{itemize}
\item $\omega_{k,1}\omega_{k,2}\dots \omega_{k,k}$ where $\omega_{k,i} = k \ k \!+\!1\dots n \!-\!1\ n\ n \! -\!1 \dots i \!+\!1 \ i $;
\item $\omega_{k,1}\omega_{k+1,2}\dots \omega_{n-1,k-1} \ \omega_{n,1} \omega_{n,2} \dots \omega_{n,k}$ where $\omega_{k,i}=k \ k \!-\!1 \dots i \!+\!1 \ i$.
\end{itemize}
\end{cnj}
Note that if  $J=S\setminus\{s_1\}$ (resp. $J=S\setminus\{s_n\}$), these two distributive sorting orders on $^JW$ coincide, since in this case $^JW$ is a distributive lattice, corresponding to the only minuscule quotient on $B_n$ (resp. $C_n$).

\begin{cnj}
\label{cnj:Bprod}
   For $n\ge 3$, all middle orders on $B_n$ can be obtained as the product of a middle order on $W_J$ with a distributive sorting order on $^JW$, where $J$ is a maximal parabolic subgroup.
\end{cnj}

\begin{table}[H]
	\centering
	\begin{tabular}{cccccccccccc}
	\toprule
	   $n$ & $2$ &$3$ & $4$ & $5$ & $6$ & $7$ & $8$ & $9$ & $10$ & $11$ & $12$  \\ \midrule 
& $2$ & $6$ & $19$ & $63$ & $215$ & $749$ & $2650$ & $9490$ & $34318$ & $125104$ & $459152$  \\ \midrule 
 & $4$ & $8$ & $25$ & $81$ & $273$ & $943$ & $3316$ & $11820$ & $42588$ & $154794$ & $566734$  \\ \bottomrule
	\end{tabular}
	\caption{\centering The conjectured number of distributive sorting orders on $B_n$, and the conjectured total number of middle orders on $B_n$.}
\end{table}

\begin{cnj}
   \label{cnj:Dtype}
   All middle orders (and in particular distributive sorting orders) on $D_n$ are minuscule middle orders.
\end{cnj}
Conjectures \ref{cnj:Btype}, \ref{cnj:Bprod} and \ref{cnj:Dtype} where verified by computer up to $B_5$ and $D_5$. A description of the sorting words corresponding to minuscule middle orders can be found in the proof of Proposition \ref{pr:minussorting}. It would be tempting to conjecture that all middle orders in type $A$, $D$ and $E$ (the simply-laced types) are minuscule, although we were not able to test it in type $E$ due to the size of these posets. This would also mean that there is no middle order on $E_8$, since it has no minuscule middle order.

There are $32$ middle orders on $F_4$, out of which $24$ are distributive sorting orders (see Figure \ref{fig:middleordersf4} in Appendix~\ref{app:2}). These are obtained as products of middle orders on $W_J\cong B_3$ with distributive sorting orders on $^JW$ where $J=\{s_1,s_2,s_3\}$ (resp. $J=\{s_2,s_3,s_4\}$), the last ones being obtained from reduced words for $^Jw_0$ $432132343213234$ and $432132432143234$ (resp. $123423212342321$ and $123423123412321$).

The sorting words giving distributive sorting orders on maximal parabolic quotients on $H_3$ are the following:

\begin{itemize}
\item $1232132321$ for $J=\{s_2,s_3\}$;
\item $2132321232132$ and $2321323213232$ for $J=\{s_1,s_3\}$;
\item $323212321323$, $321323212323$ and $321321321323$ for $J=\{s_1,s_2\}$.
\end{itemize}

Since there are respectively $8$, $1$, and $2$ middle orders on $W_J$ for $J=\{s_2,s_3\}$, $\{s_1,s_3\}$ and $\{s_1,s_2\}$, we obtain $16$ middle orders on $H_3$, out of which $10$ are distributive sorting orders. There are 8 middle orders on $H_3$ that cannot be obtained this way, giving $24$ middle orders in total (see Figure \ref{fig:middleordersh3} in Appendix~\ref{app:2}).

\section{Perspectives}

    In an ongoing work, we show that minuscule middle orders contain subposets of particular interest which we call Jurassian lattices. These lattices are congruence-uniform and extremal, and are conjecturally of size $\Cat(W)$. Their Hasse diagrams seem to be regular of degree $r$ where $r$ is the rank of $W$, and to be the $1$-skeletons of polytopes which are in some cases associahedra. These properties are reminiscent of the well-known Cambrian lattices. Jurassian lattices also seem to contain intervals similar to tilting posets, in the sense that they are obtained by removing facets containing the minimal element and have the same size.
    We also began to study middle orders on affine Coxeter groups, in an attempt to generalize the Young lattice described by Björner and Brenti~\cite{BjBr95}.
    
\section*{Acknowledgements}

This research was driven by computer exploration and computations using the open-source
mathematical software \texttt{Sage}~\cite{sage} and its algebraic
combinatorics features developed by the \texttt{Sage-Combinat}
community~\cite{Sage-Combinat}. I would like to thank Jean-Christophe Novelli and Wenjie Fang for their precious help and for proofreading this work.

%
%\begin{figure}[H]
%	\centering
%	\includegraphics[width=0.7\linewidth]{affine_quotient}
%	\caption{\centering The weak order, the middle order and the Bruhat order on $^JW$ for $W=\tilde A_3$ as defined in Proposition \ref{pr:7}, with window notations and affine codes of inverse permutations.}
%	\label{fig:affinequotient}
%\end{figure}

\bibliography{Middle_order}{}

\begin{thebibliography}{10}

\bibitem{Arms09}
Drew Armstrong.
\newblock Generalized noncrossing partitions and combinatorics of coxeter
  groups.
\newblock {\em Mem. Amer. Math. Soc.}, 202(949), 2009.

\bibitem{Arm09}
Drew Armstrong.
\newblock The sorting order on a {C}oxeter group.
\newblock {\em Journal of Combinatorial Theory, Series A}, 116(8):1285--1305,
  2009.

\bibitem{AuSi18}
Gilles Audemard and Laurent Simon.
\newblock On the {G}lucose {SAT} solver.
\newblock {\em International Journal on Artificial Intelligence Tools},
  27(01):1840001, February 2018.

\bibitem{BaRe17}
Emily Barnard and Nathan Reading.
\newblock Coxeter-bi{C}atalan combinatorics.
\newblock {\em Journal of Algebraic Combinatorics}, 47(2):241–300, August
  2017.

\bibitem{Birk37}
Garrett Birkhoff.
\newblock Rings of sets.
\newblock {\em Duke Mathematical Journal}, 3(3), 1937.

\bibitem{Birk48}
Garrett Birkhoff.
\newblock Lattice {T}heory. revised edition.
\newblock {\em Amer. Math. Soc. Colloq. Publ.}, 25(2), 1948.

\bibitem{BjBr95}
Anders Bj\"{o}rner and Francesco Brenti.
\newblock Affine permutations of type {$A$}.
\newblock {\em The Electronic Journal of Combinatorics}, 3(2), September 1995.

\bibitem{BjBr05}
Anders Björner and Francesco Brenti.
\newblock {\em Combinatorics of Coxeter Groups}.
\newblock Graduate Texts in Mathematics. Springer, April 2005.

\bibitem{BFT24}
Mathilde Bouvel, Luca Ferrari, and Bridget~E. Tenner.
\newblock {Between weak and Bruhat: The middle order on permutations}.
\newblock {\em Graphs Comb.}, 41(2), April 2025.

\bibitem{CFY25}
Federico Campanini, Francesca Fedele, and Emine Yıldırım.
\newblock Lattices of pretorsion classes.
\newblock Preprint,
  \href{http://arxiv.org/abs/2511.19223}{\texttt{arXiv:2511.19223}}, 2025.

\bibitem{ClOv25}
Andrew Claussen and Nicholas Ovenhouse.
\newblock {Mixed dimer models for Euler and Catalan numbers}.
\newblock Preprint,
  \href{http://arxiv.org/abs/2503.11936}{\texttt{arXiv:2503.11936}}, 2025.

\bibitem{CLP02}
Henry Cohn, Michael Larsen, and James Propp.
\newblock The shape of a typical boxed plane partition.
\newblock 2002.

\bibitem{Ehr34}
Charles Ehresmann.
\newblock Sur la topologie de certains espaces homogènes.
\newblock {\em The Annals of Mathematics}, 35(2):396, April 1934.

\bibitem{Grae11}
George Gr\"{a}tzer.
\newblock {\em Lattice Theory: Foundation}.
\newblock Springer Basel, 2011.

\bibitem{HNT05}
Florent Hivert, Jean-Christophe. Novelli, and Jean-Yves Thibon.
\newblock The algebra of binary search trees.
\newblock {\em Theoretical Computer Science}, 339(1):129--165, 2005.
\newblock Combinatorics on Words.

\bibitem{LaSc96}
Alain Lascoux and Marcel-Paul Sch\"{u}tzenberger.
\newblock Treillis et bases des groupes de {C}oxeter.
\newblock {\em Electron. J. Combin.}, 3(2):Research paper 27, approx. 35, 1996.

\bibitem{Macd95}
Ian~G. Macdonald.
\newblock {\em Symmetric Functions and Hall Polynomials}.
\newblock Oxford University Press, 1979.

\bibitem{oeis}
{OEIS Foundation Inc.}
\newblock The {O}n-{L}ine {E}ncyclopedia of {I}nteger {S}equences.
\newblock Published electronically at \url{http://oeis.org}.

\bibitem{PiPo17}
Vincent Pilaud and Viviane Pons.
\newblock Permutrees.
\newblock {\em Electronic Notes in Discrete Mathematics}, 61:987--993, 2017.
\newblock The European Conference on Combinatorics, Graph Theory and
  Applications (EUROCOMB'17).

\bibitem{PiSa19}
Vincent Pilaud and Francisco Santos.
\newblock Quotientopes.
\newblock {\em Bulletin of the London Mathematical Society}, 51(3):406--420,
  2019.

\bibitem{Pro84}
Robert~A. Proctor.
\newblock Bruhat lattices, plane partition generating functions, and minuscule
  representations.
\newblock {\em European Journal of Combinatorics}, 5(4):331--350, 1984.

\bibitem{Prop25}
James Propp.
\newblock Lattice structure for orientations of graphs.
\newblock {\em The Electronic Journal of Combinatorics}, 32(4):P4.26, Nov.
  2025.

\bibitem{Read05bis}
Nathan Reading.
\newblock Lattice congruences, fans and {H}opf algebras.
\newblock {\em Journal of Combinatorial Theory, Series A}, 110(2):237--273,
  2005.

\bibitem{Read06}
Nathan Reading.
\newblock Cambrian lattices.
\newblock {\em Advances in Mathematics}, 205(2):313--353, 2006.

\bibitem{RuSh12}
David~B. Rush and XiaoLin Shi.
\newblock On orbits of order ideals of minuscule posets.
\newblock {\em Journal of Algebraic Combinatorics}, 37(3):545–569, June 2012.

\bibitem{Sage-Combinat}
The {S}age-{C}ombinat community.
\newblock {S}age-{C}ombinat: enhancing {S}age as a toolbox for computer
  exploration in algebraic combinatorics, 2024.
\newblock {\tt https://wiki.sagemath.org/combinat}.

\bibitem{Stem94}
John~R. Stembridge.
\newblock {On minuscule representations, plane partitions and involutions in
  complex Lie groups}.
\newblock {\em Duke Mathematical Journal}, 73(2):469 -- 490, 1994.

\bibitem{Stem96}
John~R. Stembridge.
\newblock {On the fully commutative elements of Coxeter groups}.
\newblock {\em Journal of Algebraic Combinatorics}, 5(4):353–385, October
  1996.

\bibitem{sage}
{The Sage Developers}.
\newblock {S}agemath, the {S}age {M}athematics {S}oftware {S}ystem ({V}ersion
  10.2), 2023.
\newblock {\tt https://www.sagemath.org}.

\bibitem{Tsar90}
Sergei~V. Tsaranov.
\newblock Representation and classification of {C}oxeter monoids.
\newblock {\em European Journal of Combinatorics}, 11(2):189--204, 1990.

\end{thebibliography}
\bibliographystyle{plain}

\appendix

\section{}
\label{app:1}
%TODO expliquer brièvement ce que fait le code
The first function creates clauses in conjunctive normal form (CNF) whose variables are strict Bruhat edges that a set of edges must satisfy in order to form a distributive lattice. For example, let us consider $p$ covered by $a$ and $b$ in the Bruhat order, and $q_1, q_2$ common upper covers to $a$ and $b$. If $e_1=(p,a)$, $e_2=(a,q_1)$, $e_3=(b,q_1)$ and $e_4=(b,q_2)$ are strict Bruhat edges, by Proposition \ref{pr:rhombus} we have $e_1\implies (e_2\wedge e_3)\oplus e_4$. In CNF this is written as $(\neg e_1\vee e_2 \vee e_4)\wedge (\neg e_1\vee e_3 \vee e_4)\wedge (\neg e_1\vee \neg e_2 \vee \neg e_3\vee \neg e_4)$.
\begin{python}
def MiddleOrderClauses(n, side='right'):
    """
    Returns a dictionary D with strict Bruhat edges as keys, giving
    the corresponding variables, and a set E containing CNF clauses
    that a set of edges must satisfy to form a distributive lattice.
    """
    P = Poset((Permutations(n), lambda p, q: p.bruhat_lequal(q)))
    D = {e: k + 1 for k, e in enumerate([(p, q) for p, q in P.cover_relations()
                                         if not p.weak_le(q, side)])}
    E = set()
    for p in P:
        ls = P.upper_covers(p)
        for i in range(len(ls)):
            for j in range(i + 1, len(ls)):
                a, b = ls[i], ls[j]
                lx0 = [-D[(p, k)] for k in [a, b] if not p.weak_le(k, side)]
                La = [[(k, q) for k in [a, b] if not k.weak_le(q, side)]
                      for q in P.common_upper_covers([a, b])]
                for t in cartesian_product(La):
                    lx = lx0.copy()
                    lx.extend(D[e] for e in t)
                    E.add(tuple(lx))
                for c in Combinations(La, 2):
                    lx = lx0.copy()
                    lx.extend(-D[k] for i in range(2) for k in c[i])
                    E.add(tuple(lx))
        ls = P.lower_covers(p)
        for i in range(len(ls)):
            for j in range(i + 1, len(ls)):
                a, b = ls[i], ls[j]
                lx0 = [-D[(k, p)] for k in [a, b] if not k.weak_le(p, side)]
                La = [[(q, k) for k in [a, b] if not q.weak_le(k, side)]
                      for q in P.common_lower_covers([a, b])]
                for t in cartesian_product(La):
                    lx = lx0.copy()
                    lx.extend(D[e] for e in t)
                    E.add(tuple(lx))
                for c in Combinations(La, 2):
                    lx = lx0.copy()
                    lx.extend(-D[k] for i in range(2) for k in c[i])
                    E.add(tuple(lx))
    return D, E

def Ideal(p, w, of=0):
    """
    p = permutation of length n-1
    w = permutation of length n
    Compute the ideal of R_T corresponding to w,
    where T is the binary search tree of p.
    """
    if p == []:
        return []
    w1, w2, s, t, k0 = [], [], 1, 1, p[0]
    Id = []
    for k in w:
        if k > k0:
            w2.append(k)
            s += 1
        else:
            w1.append(k)
            Id.extend((of + t, k0 + j) for j in range(1, s))
            t += 1
    Id.extend(Ideal([k for k in p if k < k0], w1, of=of))
    Id.extend(Ideal([k for k in p if k > k0], w2, of=k0))
    return Id

def main(n):
    """
    Creates a .cnf file containing clauses that must be satisfied if there
    exists a middle order on S_n different than those we constructed.
    """
    D, E = MiddleOrderClauses(n)
    with open("clauses.cnf", "w") as file:
        file.write(f'p cnf {len(set(D.values()))} '
                   f'{len(E) + catalan_number(n - 1)}\n')
        for c in E:
            file.write(' '.join([str(k) for k in c] + ['0']) + '\n')
        # add clauses excluding known middle orders
        for p in Permutations(n - 1, avoiding=[2, 3, 1]):
            c = []
            for e in D:
                if Set(Ideal(p, e[0])).issubset(Set(Ideal(p, e[1]))):
                    c.append(-D[e])
                else:
                    c.append(D[e])
            file.write(' '.join([str(k) for k in c]) + ' 0\n')

\end{python}

\pagebreak
\section{}
\label{app:2}

We give an exhaustive list of posets of irreducible elements of middle orders on $F_4$ and $H_3$, with the number of corresponding middle orders. Posets connected by arrows are dual from each other.

\begin{figure}[H]
\centering
\includegraphics[width=0.7\linewidth]{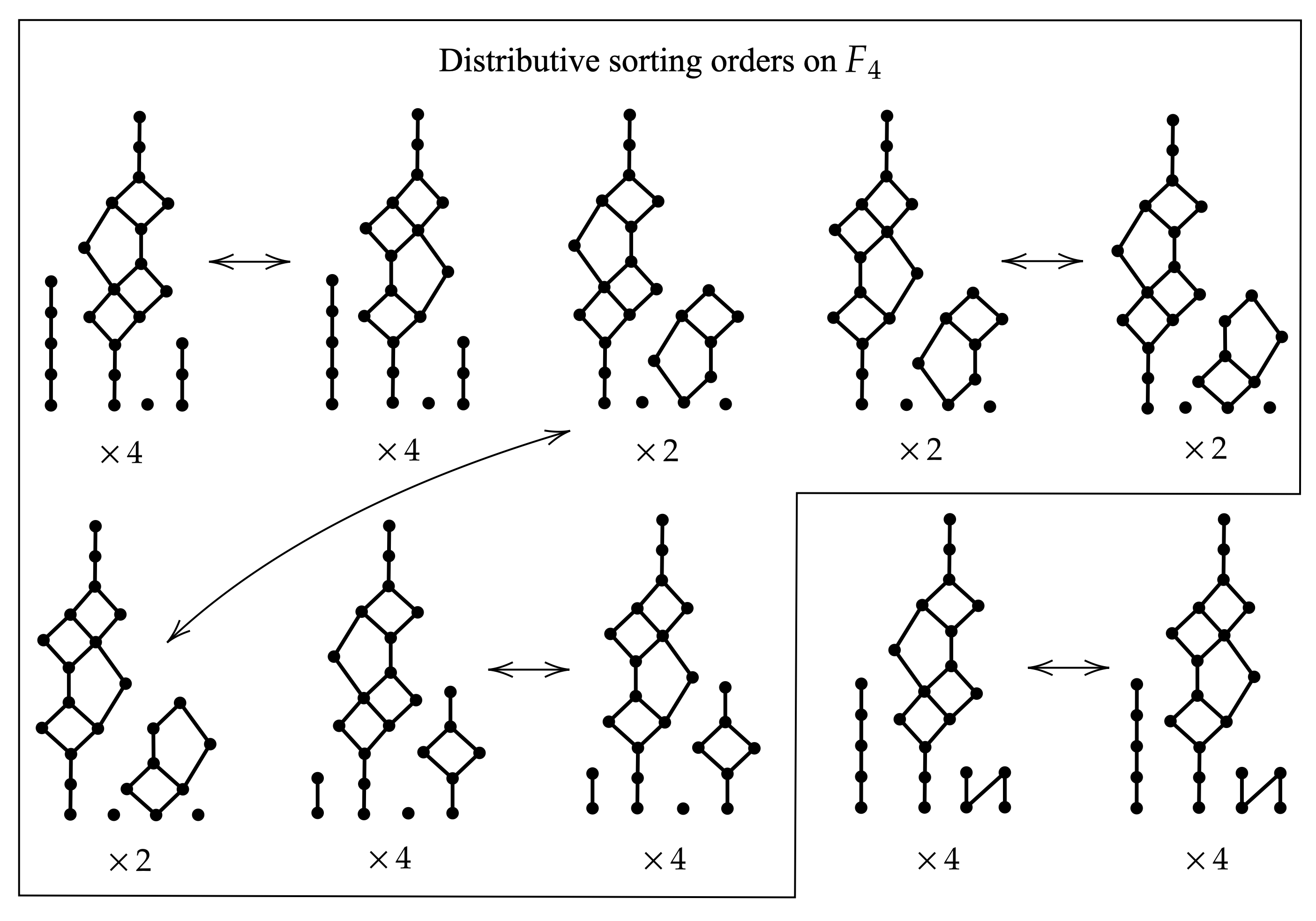}
\caption{Irreducible posets of middle orders on $F_4$.}
\label{fig:middleordersf4}
\end{figure}

\begin{figure}[H]
\centering
\includegraphics[width=0.7\linewidth]{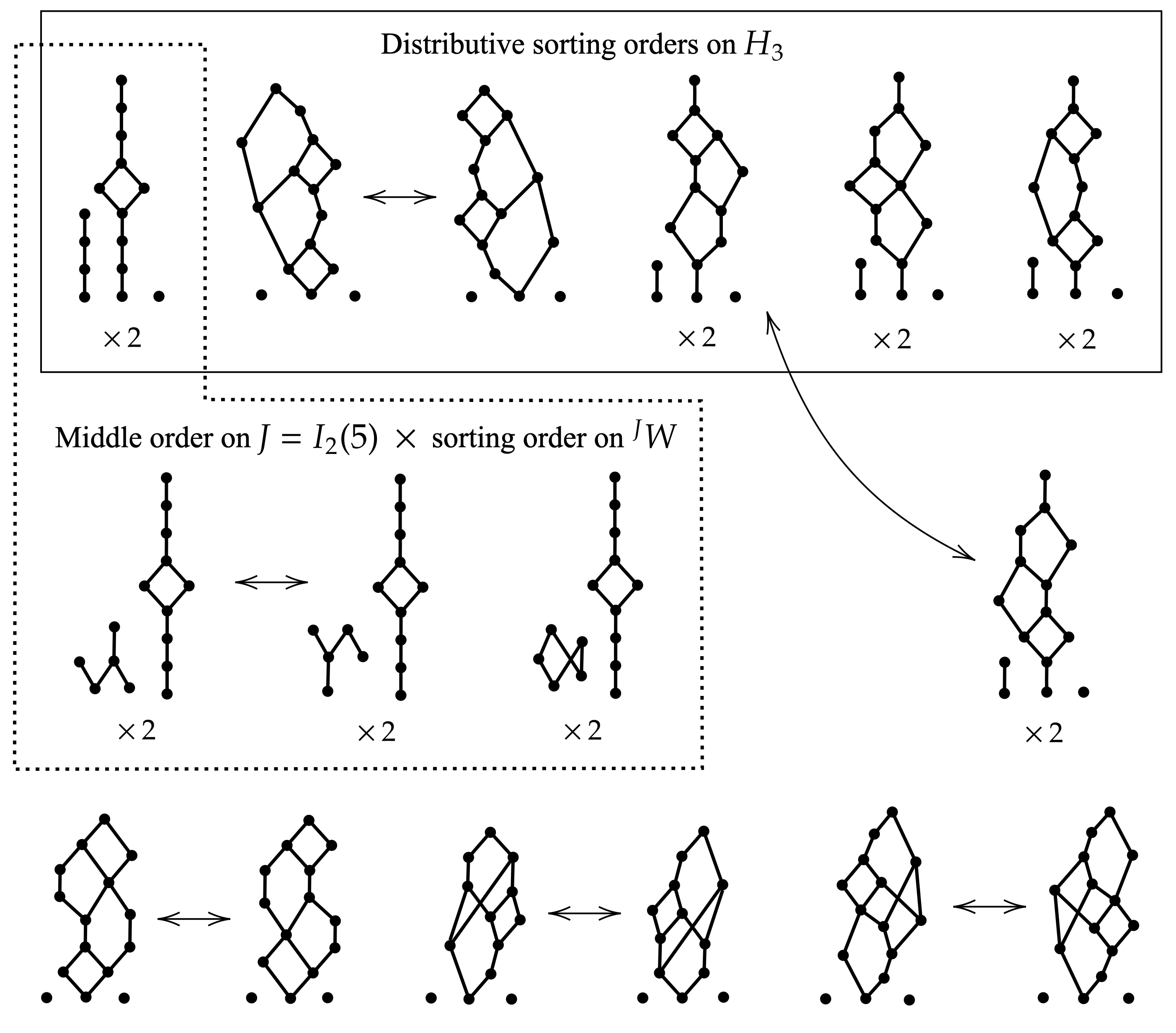}
\caption{Irreducible posets of middle orders on $H_3$.}
\label{fig:middleordersh3}
\end{figure}

\end{document}